\let\pa=\partial
\let\f=\frac
\let\wt=\widetilde
\let\wh=\widehat
\let\D=\Delta
\let\om=\omega
\let\lam=\lambda
\def\v{{\rm v}}
\def\s{\sigma}
\def\d{\delta}
\def\cA{{\mathcal A}}
\def\cB{{\mathcal B}}
\def\cC{{\mathcal C}}
\def\cE{{\mathcal E}}
\def\cF{{\mathcal F}}
\def\cS{{\mathcal S}}
\def\eqdef{\buildrel\hbox{\footnotesize def}\over =}
\def\Z{\mathop{\mathbb Z\kern 0pt}\nolimits}
\def\N{\mathop{\mathbb N\kern 0pt}\nolimits}
\def\Q{\mathop{\mathbb Q\kern 0pt}\nolimits}
\def\R{\mathop{\mathbb R\kern 0pt}\nolimits}
\def\Supp{\mathop{\rm Supp}\nolimits\ }
\def\dive{{\mathop{\rm div}\nolimits}\,}
\def\diveh{{\mathop{\rm div}_{\rm h}\nolimits}\,}
\def\curlh{{\mathop{\rm curl}_{\rm h}\nolimits}\,}
\def\nablah{\nabla_{\rm h}}
\def\uh{u^{\rm h}}
\def\uhcurl{u^{\rm h}_{\mathop{\rm curl}\nolimits}}
\def\vh{v^{\rm h}}
\def\h{{\rm h}}
\def\Deltah{\Delta_{\rm h}}
\def\baru{\bar u}
\def\baruh{\baru^\h}
\def\wtu{\wt u}
\def\barom{\bar \omega}
\def\wtom{\wt \omega}
\def\pD{\pa_3\D^{-1}}
\def\na{\nabla}
\def\dhk{\Delta_k^{\rm h}}
\def\dhkp{\Delta_{k'}^{\rm h}}
\def\LVLH {L^\infty_{\rm v} (L^2_{\rm h})}
\def\hh{{H^{\frac12,0}}}
\def\hm{{H^{-\frac12,0}}}
\def\hmo{{H^{-\frac12,1}}}
\def\bh{{B^{0,\f12}_{2,1}}}
\def\boh{{B^{1,\f12}_{2,1}}}
\def\nvn{\|\nabla v^3\|}
\def\nomn{\|\nabla \omega\|}
\def\la{\lambda}
\def\e{\epsilon}
\def\ve{\varepsilon}
\def\eqdefa{\buildrel\hbox{\footnotesize def}\over =}
\newcommand{\Rmnum}[1]{\uppercase\expandafter{\romannumeral #1} }
\newcommand{\beq}{\begin{equation}}
\newcommand{\eeq}{\end{equation}}
\newcommand{\ben}{\begin{eqnarray}}
\newcommand{\een}{\end{eqnarray}}
\newcommand{\beno}{\begin{eqnarray*}}
\newcommand{\eeno}{\end{eqnarray*}}
 \numberwithin{equation}{section}
\newcommand{\andf}{\quad\hbox{and}\quad}
\newcommand{\with}{\quad\hbox{with}\quad}
\newtheorem{defi}{Definition}[section]
\newtheorem{thm}{Theorem}[section]
\newtheorem{lem}{Lemma}[section]
\newtheorem{rmk}{Remark}[section]
\newtheorem{prop}{Proposition}[section]
\begin{document}

\title[Global solutions of $3$-D Navier-Stokes system ]
{Global solutions of $3$-D Navier-Stokes system with small unidirectional
derivative}

\author[Y. Liu]{Yanlin Liu}
\address[Y. Liu]{Academy of Mathematics $\&$ Systems Science
and   Hua Loo-Keng Center for Mathematical Sciences,
Chinese Academy of Sciences, Beijing 100190, CHINA.}
 \email{liuyanlin@amss.ac.cn}

\author[P. Zhang]{Ping Zhang}
\address[P. Zhang]{Academy of Mathematics $\&$ Systems Science
and  Hua Loo-Keng Key Laboratory of Mathematics, Chinese Academy of
Sciences, Beijing 100190, CHINA, and School of Mathematical Sciences,
University of Chinese Academy of Sciences, Beijing 100049, China.} \email{zp@amss.ac.cn}

\date{\today}

\begin{abstract}  Given initial data $u_0=(u_0^\h,u_0^3)\in H^{-\d,0}\cap H^{\f12}(\R^3)$ with both~$\uh_0$ and~$\nabla_{\rm h}\uh_0$ belonging to
~$L^2(\R^3)\cap L^\infty(\R_\v;L^2(\R^2_\h))$ and $u_0^\h\in L^\infty(\R_\v; H^{-\d}(\R^2_\h))$ for some $\delta\in ]0,1[,$ if  in addition
 $\pa_3u_0$ belongs to $H^{-\frac12,0}\cap H^{\frac12,0}(\R^3),$
 we prove that the classical $3$-D Navier-Stokes system has a unique global Fujita-Kato solution  provided that $\|\pa_3u_0\|_{H^{-\f12,0}}$ is
sufficiently small compared to a constant which  depends only on the norms of the initial data.
In particular, this result  provides some classes of large initial data
which  generate unique global solutions to  3-D Navier-Stokes system.
\end{abstract}

\maketitle

\noindent {\sl Keywords:} Navier-Stokes system, Anisotropic Littlewood-Paley theory, Slow variable

\vskip 0.2cm
\noindent {\sl AMS Subject Classification (2000):} 35Q30, 76D03  \

\setcounter{equation}{0}

\section{Introduction}
In this paper, we consider
the following $3$-D incompressible Navier-Stokes system:
\begin{equation*}
(NS)\quad \left\{\begin{array}{l}
\displaystyle \pa_t u +u\cdot\nabla u-\Delta u=-\nabla p, \qquad (t,x)\in\R^+\times\R^3, \\
\displaystyle \dive u = 0, \\
\displaystyle  u|_{t=0}=u_0,
\end{array}\right.
\end{equation*}
where $u=(u^1,u^2, u^3)$ stands for the  fluid  velocity and
$p$ for the scalar pressure function, which guarantees the divergence free condition of the velocity field.

\smallskip

In 1933,
  Leray   proved in the seminar paper \cite{lerayns}
that given a   finite energy initial data, $u_0,$ $(NS)$ has a global in time weak
solution $u$ which verifies the energy inequality
\beq
\label {energydissp}
\frac  12 \|u(t)\|_{L^2}^2 +\int_0^t \|\nabla u(t')\|_{L^2}^2 dt' \leq \frac 12 \|u_0\|_{L^2}^2\,.
\eeq
However, the regularity and uniqueness of such solutions is still one of the biggest open questions
in the field of mathematical fluid mechanics except the case when the initial data have special structure.
 For
instance,  with axi-symmetric initial velocity and without swirl component, Ladyzhenskaya \cite{La}  and independently Ukhovskii and Yudovich
\cite{UY}  proved the
existence of weak solution along with the uniqueness and regularity of such solution to $(NS)$.

 Fujita-Kato  \cite{fujitakato}  constructed local in time unique solution
 to $(NS)$ with initial data in $H^{\frac 12}(\R^3)$
\footnote{Through out this paper, we always designate $H^s$ to be
 homogeneous Sobolev spaces, and $B^s_{p,r}$ to be  homogeneous Besov spaces.}.
 Furthermore,  if  $\|u_0\|_{H^{\frac 12}}$ is sufficiently small, then the  solution exists globally in time (see \cite{kato} for similar result with initial data  in  $L^3(\R^3)$).
This result was extended by Cannone, Meyer and Planchon \cite{cannonemeyerplanchon} for initial data belonging to $
 B^{-1+\frac 3p}_{p,\infty}(\R^3)$ with $p\in ]3,\infty[.$
 The end-point result in this direction is given by Koch and Tataru  \cite{kochtataru}. They proved that given initial data being
sufficiently small in $\text{BMO}^{-1}(\R^3)$,  then $(NS)$ has a unique global  solution.
 We remark that for $p\in ]3,\infty[$, there holds
 $$H^{\f12}(\R^3) \hookrightarrow L^3(\R^3) \hookrightarrow B^{-1+\frac 3p}_{p,\infty}(\R^3) \hookrightarrow \text{BMO}^{-1}(\R^3)
 \hookrightarrow B^{-1}_{\infty,\infty}(\R^3),$$
 and the norms to the above spaces   are sclaing-invariant
 under the following transformation:
 \beq \label{S1eq2} u_\lambda(t,x)=\lambda u(\lambda^2 t,\lambda x) \andf u_{0,\lambda}(x)=\lambda u_0(\lambda x).\eeq
 We notice that for any solution $u$ of $(NS)$ on $[0,T],$ $u_\la$ determined by \eqref{S1eq2} is also a solution of $(NS)$ on $[0,T/\la^2].$ We remark that the largest
 space, which belongs to $\cS'(\R^3)$ and the norm of which is scaling invariant under \eqref{S1eq2}, is $B^{-1}_{\infty,\infty}(\R^3)$ (see \cite{meyer}).
Moreover, Bourgain and Pavlovi\'c \cite{BP08} proved that $(NS)$ is actually
ill-posed  with initial data in  $B^{-1}_{\infty,\infty}(\R^3).$
That is the reason why we call such kind of initial data, which generates a unique global solution to $(NS)$ and the $B^{-1}_{\infty,\infty}(\R^3)$
 norm of which
is large enough, as large initial data.

\smallskip

We now list some examples of large initial data which generate unique global
solutions to $(NS)$. First of all, for any initial data,
Raugel and Sell \cite{raugelsell} obtained the global well-posedness of $(NS)$ in a thin enough domain. This result was extended
by Raguel, Sell and Iftimie in \cite{iftimieraugelsell}  that $(NS)$ has a unique global  periodic solution provided that
the initial data $u_0$ can be split as $u_0=v_0+w_0$, with $v_0$ being a bi-dimensional solenoidal vector field in $L^2({\Bbb{T}}^2_\h)$ and $w_0\in H^{\frac 12}(\Bbb{T}^3)$, such that
$$\|w_0\|_{H^{\frac 12}(\Bbb{T}^3)}\exp \bigl(\|u_0\|_{L^2(\Bbb{T}^2_\h)}^2\bigr)\quad\mbox{is sufficiently small}.$$

Chemin and Gallagher \cite{cgens} constructed another class of examples of initial
data which is big in $B^{-1}_{\infty,\infty}(\Bbb{T}^3)$
and strongly oscillatory in one direction. More precisely, for any given positive integer
$N_0$, there exists some $N_1$ such that for any integer $N>N_1$, $(NS)$ has a unique global solution with initial data
$$u_0^N=\left(Nv^\h_0(x_h)\cos(Nx_3),-\dive_\h v_0^h(x_h)\sin(Nx_3)\right),$$
where $v_0^\h$ is any bi-dimensional solenoidal  field
 on $\Bbb{T}^2$ with
$$\Supp \hat v^\h_0\subset[-N_0,N_0]^2\quad\mbox{and}\quad
\|v_0^\h\|_{L^2(\Bbb{T}^2)}\leq C(\ln N)^{\frac 19}.$$

The other  interesting class of large initial data
which can generate unique global  solutions to $(NS)$
is the so-called
slowly varying data, which was introduced by Chemin and Gallagher
in \cite{CG10} (see also \cite{CGZ, CZ15}).

\begin{thm}[Theorem 3 of \cite{CG10}]\label{thmCG}
{\sl Let $\vh_0=(v^1_0,v^2_0)$ be a horizontal, smooth divergence free vector field
on $\R^3$ such that it and all its derivatives belong to
$L^2(\R^3)\cap L^2(\R_{x_3};H^{-1}(\R^2_\h))$;
let $w_0$ be a smooth divergence free vector field on $\R^3$. Then there exists a positive constant $\varepsilon_0$ such that if $\varepsilon\leq\varepsilon_0$,
then the initial data
\begin{equation}\label{slowinitialdata}
u_0^\varepsilon(x)=(\vh_0 +\varepsilon w^\h_0,w^3_0)(x_\h,\varepsilon x_3)
\end{equation}
generates a unique, global smooth solution $u^\varepsilon$ of $(NS)$.
}\end{thm}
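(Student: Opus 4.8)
The plan is to realize $u^\ve$ as a controlled perturbation of a globally defined two‑dimensional flow, exploiting that $u_0^\ve$ depends on $x_3$ only through the slow variable $z\eqdef\ve x_3$. Observe first that $u_0^\ve$ is \emph{not} small in the scaling‑critical space $H^{\f12}(\R^3)$ — a direct rescaling gives $\|u_0^\ve\|_{H^{\f12}}\sim\ve^{-\f12}$ — so no classical smallness theorem applies; nevertheless the Fujita--Kato theory produces a unique maximal mild solution $u^\ve$ on some interval $[0,T_\ve^\ast[$, smooth for $t>0$, and the whole task is to prove that $T_\ve^\ast=+\infty$ once $\ve$ is small.

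First I would build a profile. Freezing $z$ as a parameter, let $\vh(t,\xh,z)$ be the global solution of the $2$‑D Navier--Stokes system in $\xh$ with datum $\vh_0(\cdot,z)$; by the classical two‑dimensional theory this exists for all time, and since $\vh_0$ and all its derivatives lie in $L^2(\R^3)\cap L^2(\R_z;H^{-1}(\R^2_\h))$, the norms of $\vh$ and of all its $\xh$‑ and $z$‑derivatives are bounded globally in $t$ and integrable (resp. bounded) in $z$; it is exactly the $H^{-1}$ hypothesis that makes the relevant \emph{time} integrals of these $2$‑D quantities finite. Next let $w^3(t,\xh,z)$ solve the transport--diffusion equation $\pt w^3+\vh\cdot\nablah w^3-\Deltah w^3=0$ with datum $w_0^3(\cdot,z)$, and let $w^\h(t,\xh,z)$ solve the forced linearized system $\pt w^\h+\vh\cdot\nablah w^\h+w^\h\cdot\nablah\vh-\Deltah w^\h=-\nablah q^{(1)}-w^3\pa_z\vh$ with datum $w_0^\h(\cdot,z)$ and with $\diveh w^\h=-\pa_z w^3$ fixing the pressure $q^{(1)}$. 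Since $w_0$ is divergence free on $\R^3$, the constraint $\diveh w^\h+\pa_z w^3=0$ holds at $t=0$ and is propagated in time; consequently the approximate solution
\beno
\baru^\ve(t,x)\eqdef\bigl(\vh+\ve w^\h,\ w^3\bigr)(t,\xh,\ve x_3)
\eeno
is exactly divergence free, satisfies $\baru^\ve|_{t=0}=u_0^\ve$, and solves $(NS)$ up to a source $F^\ve$ built from the terms formally of order $\ve^2$ (the vertical diffusion $\ve^2\pa_z^2$ of the profiles and the $\ve$‑corrections of the nonlinearity and the pressure); a rescaling estimate then shows $\|F^\ve\|=O(\ve)$ in the norms used below.

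Next I would set $u^\ve=\baru^\ve+R^\ve$ on $[0,T_\ve^\ast[$, so that $R^\ve$ solves
\beno
\pt R^\ve-\D R^\ve+\baru^\ve\cdot\na R^\ve+R^\ve\cdot\na\baru^\ve+R^\ve\cdot\na R^\ve=-\na\wt p-F^\ve,\qquad R^\ve|_{t=0}=0.
\eeno
The structural gain is that in an $L^2$ estimate both $\int(\baru^\ve\cdot\na R^\ve)\cdot R^\ve\,dx$ and $\int(R^\ve\cdot\na R^\ve)\cdot R^\ve\,dx$ vanish by incompressibility, so that only $R^\ve\cdot\na\baru^\ve$ and $F^\ve$ survive; writing $R^\ve\cdot\na=R^{\ve,\h}\cdot\nablah+R^{\ve,3}\pa_3$, the vertical piece is harmless because $\pa_3\baru^\ve=\ve(\pa_z\hbox{profile})(\cdot,\ve x_3)=O(\ve)$, while the horizontal piece is controlled by $\|\nablah\baru^\ve(t)\|$, a $2$‑D quantity that is globally time‑integrable by the previous step. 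A Gronwall argument then gives, uniformly on $[0,T_\ve^\ast[$, a bound of the form $\|R^\ve(t)\|^2+\int_0^t\|\na R^\ve\|^2\,dt'\le C\ve\,M$ with $M$ depending only on the norms of $\vh_0$ and $w_0$. Upgrading this — using that $R^\ve(0)=0$ and $F^\ve$ are small, so that the genuinely quadratic term $R^\ve\cdot\na R^\ve$ can be absorbed — to a bound in a scaling‑critical anisotropic norm yields a regularity criterion valid on all of $[0,T_\ve^\ast[$; choosing $\ve_0$ so that $C\ve_0 M$ lies below the relevant threshold then forces $T_\ve^\ast=+\infty$, and uniqueness is that of Fujita--Kato.

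The hard part is the last step: estimating $R^\ve$ while the background $\baru^\ve$ is \emph{large}, of critical size $\ve^{-\f12}$. Incompressibility kills the worst transport term and $\pa_3\baru^\ve=O(\ve)$ kills the worst stretching term, but controlling the surviving $O(1)$ horizontal contributions forces one to work in anisotropic Sobolev/Besov spaces carrying more horizontal than vertical regularity, and it is there that the global‑in‑time and in‑$z$ integrability of the underlying $2$‑D quantities must be fully used — which is precisely why the hypothesis that $\vh_0$ and all its derivatives belong to $L^2(\R_z;H^{-1}(\R^2_\h))$ is imposed. Verifying that $F^\ve$ is small in a norm compatible with the critical well‑posedness framework, rather than merely in $L^1_t(L^2)$, is the other technical hurdle.
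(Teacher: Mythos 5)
Your proposal does not follow the route of this paper at all: here Theorem \ref{thmCG} is obtained as a direct corollary of Theorem \ref{thmmain}. The paper's whole argument consists in checking the smallness condition \eqref{S1eq1} for the data \eqref{slowinitialdata}: since $u_0^\ve$ depends on $x_3$ only through $\ve x_3$, the quantities $A_\d(u_0^{\ve,\h})$ and $B_\d(u_0^{\ve})$ of \eqref{S1eq3}--\eqref{S1eq4} are bounded uniformly in $\ve\in]0,1]$ (the $L^\infty_\v(L^2_\h)$-type norms are invariant under the vertical dilation, and the $\ve^{-\f12}$ loss in the $H^{s,0}$ norms of $u_0^\ve$ is compensated in $B_\d$ by the factor coming from $\pa_3u_0^\ve$), while $\|\pa_3u_0^\ve\|_{H^{-\f12,0}}\leq C\ve^{\f12}\bigl(\|\pa_3v_0^\h\|_{H^{-\f12,0}}+\ve\|\pa_3w_0^\h\|_{H^{-\f12,0}}+\|\pa_3w_0^3\|_{H^{-\f12,0}}\bigr)$, because $\pa_3$ produces a factor $\ve$ and the vertical rescaling costs only $\ve^{-\f12}$. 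Hence \eqref{S1eq1} holds for $\ve$ small and Theorem \ref{thmmain} concludes. What you propose instead is essentially a reconstruction of the original argument of \cite{CG10}: a two-dimensional profile with the slow variable as a parameter, correctors $w^\h$, $w^3$, an approximate solution $\baru^\ve$, and a perturbation estimate for the remainder. That is a legitimate alternative strategy (it is how the result was first proved), and your profile equations (2-D Navier--Stokes for $\vh$, transport--diffusion for $w^3$, the forced linearized system for $w^\h$ with $\diveh w^\h=-\pa_3$-compatibility) are the right ones.

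However, as a proof your proposal has a genuine gap exactly where all the work lies. The $L^2$ energy estimate for $R^\ve$ that you carry out (incompressibility killing the transport terms, $\pa_3\baru^\ve=O(\ve)$, Gronwall with the time-integrable coefficient $\|\nablah\baru^\ve\|_{L^\infty}^2$) only yields smallness of $R^\ve$ in $L^\infty_t(L^2)\cap L^2_t(\dot H^1)$, which for the $3$-D Navier--Stokes system gives no continuation criterion whatsoever; the step you describe as ``upgrading this to a bound in a scaling-critical anisotropic norm'' is precisely the heart of the matter and is only asserted, not performed. In a critical norm the terms $R^{\ve,\h}\cdot\nablah\baru^\ve$, the stretching terms and the pressure can no longer be handled by a Gronwall argument with an $L^1_t$ scalar coefficient: one needs anisotropic product laws and time-weighted norms of the type of Lemmas \ref{S4lem1}, \ref{S4lem2} and \ref{lemproductlaw}, together with a structure-exploiting choice of unknowns (here the pair $(\om,v^3)$ of \eqref{eqtv3om}, in \cite{CG10} an analogous anisotropic Besov bootstrap), and this occupies Sections 4--6 of the paper, respectively most of \cite{CG10}. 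A second, smaller imprecision: the claim $\|F^\ve\|=O(\ve)$ ``in the norms used below'' is not justified and is not even the right rate in $L^2(\R^3)$-based spaces, where the vertical rescaling costs $\ve^{-\f12}$ so that formally $O(\ve)$ terms are only $O(\ve^{\f12})$; this is still small, but it shows that the norms in which $F^\ve$ must be measured (they have to be compatible with the critical continuation argument, not merely $L^1_t(L^2)$) were never pinned down. Until that critical-norm estimate for $R^\ve$ and $F^\ve$ is actually closed, the argument does not prove $T^\ast_\ve=\infty$.
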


On the other hand, Kukavica and  Ziane proved in \cite{KZ2} that if
$$\int_0^{T^\star} \|\partial_3 u(t,\cdot)\|^q_{L^p}dt<\infty \with \f2q+\f3p=2 \andf p\in \left]9/4, 3\right[, $$
 then local smooth solution of $(NS)$ can be extended beyond time $T^\ast.$

 Motivated by \cite{CG10} and \cite{KZ2}, we are going to study the global well-posedness of $(NS)$
 with initial data $u_0$ satisfying $\pa_3u_0$ being sufficiently small in some critical spaces. Before we present the main result, let us
 recall the definition of anisotropic Sobolev space $H^{s,s'}(\R^3)$.

\begin{defi}\label{defanisob}
{\sl For $(s,s')$ in $\R^2$, $H^{s,s'}(\R^3)$ denotes
the space of homogeneous tempered distribution~$a$  such~that
 $$
\|a\|^2_{H^{s,s'}} \eqdefa \int_{\R^3} |\xi_{\rm
h}|^{2s}|\xi_3|^{2s'} |\wh a (\xi)|^2d\xi <\infty\with \xi_{\rm
h}=(\xi_1,\xi_2)\andf \xi=(\xi_\h,\xi_3) .
 $$
}
\end{defi}

The main result of this paper states as follows.
\begin{thm}\label{thmmain}
{\sl  Let $\delta\in]0,1[,$ $u_0=(u_0^\h,u_0^3)\in H^{-\d,0}\cap H^{\f12}(\R^3)$ with both~$\uh_0$ and~$\nabla_{\rm h}\uh_0$ belonging to
~$L^2(\R^3)\cap L^\infty(\R_\v;L^2(\R^2_\h))$ and $u_0^\h\in L^\infty(\R_\v; H^{-\d}(\R^2_\h)).$ If we assume in addition that
$\pa_3u_0\in H^{-\frac12,0}\cap H^{\frac12,0}(\R^3),$ then
 there exists a  small enough positive constant $\e_0$ such that if
\begin{equation}\label{S1eq1}
\|\pa_3u_0\|_{H^{-\f12,0}}^2\exp\left(C\bigl(A_\d(\uh_0)+B_\d(u_0)\bigr)\right)
\leq\e_0,
\end{equation}
where
\beq \label{S1eq3} A_\d(u_0^\h)\eqdefa \biggl(\frac{\|\nabla _{\rm h}\uh_0\|_{\LVLH}^2
\|\uh_0\|_{L^\infty_{\rm v} (B^{-\d}_{2,\infty})_\h}
^{\frac 2 \delta}} {\|\uh_0\|_{\LVLH}^{\frac 2 \d}}
+\|\uh_0\|_{\LVLH}^{2}\biggr)
\exp\Big(C_\d(1+\|\uh_0\|_{\LVLH}^4)\Bigr),\eeq
and
\beq \label{S1eq4} B_\d(u_0)\eqdefa\|u_0\|_{H^{-\d,0}}^{\f12}\|u_0\|_{H^{\d,0}}^{\f12}
\|\pa_3u_0\|_{H^{-\frac12,0}}^{\f12}
\|\pa_3u_0\|_{H^{\frac12,0}}^{\f12}
\exp\bigl(CA_\d(\uh_0)\bigr),\eeq
$(NS)$ has a unique global solution
$u\in C\bigl(\R^+;H ^{\frac12}\bigr)\cap
L^2\bigl(\R^+;H ^{\frac 32}\bigr)$.
}\end{thm}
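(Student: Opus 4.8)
\emph{The plan.} Since $u_0\in H^{\f12}(\R^3)$, the Fujita--Kato theorem \cite{fujitakato} furnishes a unique maximal solution $u\in C\bigl([0,T^\star[;H^{\f12}\bigr)\cap L^2_{\rm loc}\bigl([0,T^\star[;H^{\f32}\bigr)$, and it is enough to bound, on $[0,T^\star[$, the quantity $\|u\|_{L^\infty_t(H^{\f12})}+\|u\|_{L^2_t(H^{\f32})}$ by a constant depending only on the norms of $u_0$ entering \eqref{S1eq3}--\eqref{S1eq4} in order to conclude $T^\star=+\infty$; uniqueness is then that of Fujita--Kato in $H^{\f12}$. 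So the whole issue is a global-in-time a priori estimate. The guiding idea, inherited from \cite{CG10} and \cite{KZ2}, is that the smallness of $\|\pa_3u_0\|_{H^{-\f12,0}}$ forces $\om\eqdefa\pa_3u$ to remain small in $H^{-\f12,0}$ --- a scaling invariant quantity under \eqref{S1eq2} --- so that $u$ is, to leading order, a two-dimensional flow, and the two-dimensional Navier--Stokes machinery then closes the estimates on the horizontal part $\uh$, producing the double-exponential constant displayed in \eqref{S1eq3}.

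\emph{The equation for $\om=\pa_3u$.} Differentiating $(NS)$ in $x_3$ gives
\[
\pt\om-\D\om+u\cdot\na\om=-\om\cdot\na u-\na\pa_3p,\qquad \dive\om=0 .
\]
I would run an energy estimate in $H^{-\f12,0}$, i.e. apply $|D_\h|^{-\f12}$ and take the $L^2$ inner product; the pressure drops out because $\dive\om=0$, and one is left with
\[
\f12\f{d}{dt}\|\om\|_{H^{-\f12,0}}^2+\|\na\om\|_{H^{-\f12,0}}^2
=-\bigl(u\cdot\na\om\,\big|\,\om\bigr)_{H^{-\f12,0}}-\bigl(\om\cdot\na u\,\big|\,\om\bigr)_{H^{-\f12,0}} .
\]
The right-hand side is treated by the anisotropic Littlewood--Paley calculus (Bony decomposition in the horizontal and vertical variables simultaneously). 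Using $\dive u=0=\dive\om$ to recast the terms in divergence form and, above all, to exchange the only dangerous vertical derivative via $\pa_3u^3=-\diveh\uh$, these nonlinear contributions reduce to products of the small field $\om$ with $\om$ itself and with the horizontal quantities $(\uh,\na_\h\uh,u^3)$; each can be bounded by $\|\om\|_{H^{-\f12,0}}\|\na\om\|_{H^{-\f12,0}}$ times a coefficient built from horizontal norms of $u$, where the endpoint frequency interactions are absorbed thanks to the higher regularity $\om\in L^\infty_t(H^{\f12,0})$ that is propagated in parallel from $\pa_3u_0\in H^{\f12,0}$. Absorbing the dissipation into the left-hand side yields a Gronwall inequality of the form
\[
\|\om(t)\|_{H^{-\f12,0}}^2+\int_0^t\|\na\om\|_{H^{-\f12,0}}^2
\le\|\pa_3u_0\|_{H^{-\f12,0}}^2\exp\Bigl(C\int_0^t\Phi(t')\,dt'\Bigr) ,
\]
where $\Phi$ is made of horizontal norms of $\uh$ together with the $H^{\pm\d,0}$ norms of $u$.

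\emph{The two-dimensional estimates and closing the loop.} It remains to show $\int_0^\infty\Phi<\infty$ with the right dependence on the data. The horizontal velocity satisfies $\pt\uh+\uh\cdot\na_\h\uh-\D\uh=-\na_\h p-u^3\pa_3\uh$, and since $u^3\pa_3\uh=u^3\om^\h$ is a forcing small with $\om$, the field $\uh$ obeys a perturbed two-dimensional Navier--Stokes system. Three propagations are carried out, all in the $L^\infty_\v$-in-the-vertical setting: the $\LVLH$ energy of $\uh$; the negative (hence sub-critical and surviving) index $L^\infty_\v(B^{-\d}_{2,\infty})_\h$, which produces the time-decay of $\|\uh(t)\|_{\LVLH}$ and thereby the finiteness of the time integrals in $\Phi$ --- this is where the quotient $\|\uh_0\|_{L^\infty_\v(B^{-\d}_{2,\infty})_\h}^{2/\d}\big/\|\uh_0\|_{\LVLH}^{2/\d}$ in \eqref{S1eq3} comes from; and the $L^\infty_\v(\dot H^1_\h)$ bound for $\uh$, which closes by the classical two-dimensional $\dot H^1$ argument and generates the factor $\exp\bigl(C_\d(1+\|\uh_0\|_{\LVLH}^4)\bigr)$. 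In parallel, energy estimates on $u$ in $H^{-\d,0}\cap H^{\d,0}$ (again exploiting that $\om$ is small) furnish the bound encoded in $B_\d(u_0)$, see \eqref{S1eq4}. Altogether $\int_0^\infty\Phi\le C\bigl(A_\d(\uh_0)+B_\d(u_0)\bigr)$; plugging this into the Gronwall inequality and invoking the smallness assumption \eqref{S1eq1} through a standard continuity (bootstrap) argument, $\|\om\|_{L^\infty_t(H^{-\f12,0})}$ stays comparable to $\|\pa_3u_0\|_{H^{-\f12,0}}$ on all of $[0,T^\star[$, whence every norm above is globally finite and $\|u\|_{L^\infty_t(H^{\f12})}+\|u\|_{L^2_t(H^{\f32})}<\infty$, forcing $T^\star=+\infty$.

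\emph{Main obstacle.} The crux is the nonlinear analysis of the $\om$-equation in the anisotropic space $H^{-\f12,0}$: one must control the transport term $u^3\pa_3\om$ and the stretching term $\om^3\pa_3u$, where the ``bad'' vertical derivative lives, which is possible only after trading it --- through $\pa_3u^3=-\diveh\uh$ --- for horizontal derivatives, and then carrying out the resulting anisotropic product and commutator estimates sharply enough (using the propagated $H^{\f12,0}$ regularity of $\om$) that no logarithmic or scaling loss appears and the smallness of $\|\om\|_{H^{-\f12,0}}$ genuinely closes the inequality. A second, more technical difficulty is to make the two-dimensional-type estimates for $\uh$ rigorous in the $L^\infty_\v$ framework, where one cannot integrate by parts in $x_3$, and to squeeze out of the negative index $B^{-\d}_{2,\infty}$ exactly the decay needed for all the time integrals in $\Phi$ to converge.
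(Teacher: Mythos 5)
Your route is genuinely different from the paper's (no decomposition $u=\baru+\wtu+v$), but as it stands it has a structural gap: nothing in your scheme controls the vertical component $u^3$ in critical norms, and such control is unavoidable. In your $H^{-\f12,0}$ energy estimate for $\pa_3u$ the stretching term contains $\pa_3\uh\cdot\na_\h u^3$, so your Gronwall coefficient $\Phi$ must contain a globally integrable critical norm of $\na_\h u^3$; likewise the forcing $-u^3\pa_3\uh$ in your ``perturbed 2D'' equation for $\uh$ is a product of the \emph{large} field $u^3$ with $\pa_3\uh$, and smallness of $\pa_3\uh$ in $H^{-\f12,0}$ alone does not make it harmless in the $L^\infty_\v$ estimates you want. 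The field $u^3$ cannot be recovered from the two-dimensional machinery (which only sees $\uh$), and estimating it from its own equation brings in $\pa_3 p$, whose main part $\pa_3\D^{-1}\diveh\diveh(\uh\otimes\uh)$ is neither small nor bounded by the quantities you propagate. This is precisely why the paper introduces the corrector $\wtu$ of \eqref{eqtwtu}, with $\wtu^3|_{t=0}=u_0^3$: the large part of $u^3$ is carried by a \emph{linear} system transported by $\baruh$, its critical norms are bounded by $B_\d(u_0)$ of \eqref{S1eq4} (Proposition \ref{propwtu}), and only the remainder $v^3$ needs to be small; your proposal has no substitute for this step. A second gap: the $L^\infty_\v$ two-dimensional estimates (in $\LVLH$, $L^\infty_\v(B^{-\d}_{2,\infty})_\h$ and $L^\infty_\v(\dot H^1_\h)$) are applied by the paper to the auxiliary system \eqref{eqtbaru}, whose convection and pressure are purely horizontal and whose data $\nablah^\perp\Deltah^{-1}(\curlh\uh_0)$ is horizontally divergence free --- that is what makes Theorem 1.2 of \cite{CZ15} applicable. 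For the true $\uh$, with vertical transport $u^3\pa_3\uh$ and the full three-dimensional pressure, such slice-wise estimates are not available and proving them would essentially be the original problem; note also that $\uh_0$ itself is not $\diveh$-free, an issue the paper resolves by the Biot--Savart splitting \eqref{Helmholtz}.

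Finally, even granting your a priori bounds, the conclusion that $\|u\|_{L^\infty_t(H^{\f12})}+\|u\|_{L^2_t(H^{\f32})}<\infty$ is asserted, not proved: the quantities you control are anisotropic ($H^{-\f12,0}$ norms of $\pa_3u$, horizontal norms of $\uh$) and do not dominate the isotropic $H^{\f12}$ norm. The paper bypasses this by checking that $\int_0^{T^\ast}\|\na u\|_{B^{-1}_{\infty,\infty}}^2\,dt<\infty$ and invoking the endpoint blow-up criterion of \cite{CZ5} (Theorem \ref{blowupBesovendpoint}); you would need this, or some comparable continuation criterion, to finish.
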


Let us present some examples of initial data the norm of  which are big in
$B^{-1}_{\infty,\infty}(\R^3)$, yet they satisfy the smallness condition \eqref{S1eq1}.

\begin{itemize}

\item[(1)] The first class of example is the slow variable data
given by \eqref{slowinitialdata}.  It is easy to observe that for any $\varepsilon\in]0,1]$, both
$A_\d(u_0^{\ve,\h})$ and $B_\d(u_0^\ve)$ have uniform upper bounds which are
independent of   $\ve.$  Whereas it follows from trivial calculation that
$$\|\pa_3u_0^\ve\|_{H^{-\f12,0}}\leq C\ve^{\f12}\bigl(
\|\pa_3v_0^\h\|_{H^{-\f12,0}}+\ve\|\pa_3w_0^\h\|_{H^{-\f12,0}}
+\|\pa_3w_0^3\|_{H^{-\f12,0}}\bigr).$$
So that as long as $\ve$ is sufficiently small,  the slow variable data
\eqref{slowinitialdata} satisfies \eqref{S1eq1}.
Hence Theorem \ref{thmCG} is  a  direct corollary of Theorem \ref{thmmain}.

\item[(2)] Motivated by \eqref{slowinitialdata}, for any positive integer $N$ and $(v_k,w_k), k=1,\cdots, N,$ satisfying the assumptions of Theorem \ref{thmCG},
we construct  initial  data which
is a linear combination of $N$ slowly varying parts as follows
\begin{equation}\label{ex2}
u_0^{(\varepsilon_1,\cdots,\ve_N)}(x)
=\sum_{k=1}^N(\vh_k+\varepsilon_k w^\h_k,w^3_k)(x_\h,\varepsilon_k x_3).
\end{equation}
It is easy to check that  $u_0^{(\varepsilon_1,\cdots,\ve_N)}$ satisfies
the smallness condition \eqref{S1eq1} provided
$\varepsilon_1,\cdots,\ve_N$ are sufficiently small.

\item[(3)] We can also consider the initial data which is slowly varying  in
one variable but fast varying in another variable. For instance,
\begin{equation}\label{ex3}
u_0^{(\ve,\lambda)}(x)
=(v_0^1+\ve w^1_0,\lam v_0^2+\lam\ve w_0^2,\lam w^3_0)(\lam x_1,x_2,\ve x_3).
\end{equation}
Then it follows from Sobolev inequality that
\begin{align*}
\|\pa_3u_0^{(\ve,\lambda)}\|_{H^{-\f12,0}}\leq & C\|\pa_3u_0^{(\ve,\lambda)}\|_{L^{\f43}_\h(L^2_\v)}\\
\leq &C\lam^{\f14}\ve^{\f12}\bigl\|\bigl(\pa_3v_0^\h,\pa_3w_0\bigr)\bigr\|_{L^{\f43}_\h(L^2_\v)}.
\end{align*}
On the other hand, we have
\begin{align*}
&A_\d(u_0^{(\ve,\lambda),\h})\le C\exp(C\lam^2),
\andf B_\d(u_0^{(\ve,\lambda)})\le C\exp\bigl(C\exp(C\lam^2)\bigr).
\end{align*}
Therefore as long as $\ve$ and $\lam$ verify the condition that $$\ve^{\f12}\exp\bigl(C\exp\bigl(\exp(\lam^2)\bigr)\bigr)$$
is still sufficiently small, $u_0^{(\ve,\lambda)}(x)$  satisfies the smallness condition \eqref{S1eq1}.

\item[(4)] The other class of examples can be obtained by cutting the horizontal frequency
of the  initial data. For instance, given smooth solenoidal vector field $u_0$ with
$$\Supp\widehat{u_0}\subset \bigl\{\xi=(\xi_\h,\xi_3)\in\R^3\,:\,|\xi_\h|>R\bigr\}$$
for some positive constant $R,$  we find
$$\|\pa_3u_0\|_{H^{-\f12,0}}\thicksim R^{-\f12}.$$
Thus $u_0$ satisfies the smallness condition \eqref{S1eq1}  provided
$R$ is sufficiently large.
\end{itemize}

We refer  Proposition 1.1
of \cite{CG10} for the calculation of  the
 $B^{-1}_{\infty,\infty}(\R^3)$ norm to the data given by \eqref{slowinitialdata},~
\eqref{ex2} and \eqref{ex3}.

\smallskip

Let us end this section with the notations we shall use in this context.

\noindent{\bf Notations:}  We designate  the $L^2$ inner product of $f$ and $g$  by  $\left(f | g\right)_{L^2}.$ We shall always denote $C$ to be a uniform constant
which may vary from line to line.
 $a\lesssim b$ means that $a\leq Cb$.
 $\left(d_j\right)_{j\in\Z}$ (resp.  $\left(d_{k,\ell}\right)_{(k,\ell)\in\Z^2}$ and $\left(c_j\right)_{j\in\Z}$)  stands for a generic element on the unit
sphere of $\ell^1(\Z)$  (resp. $\ell^1(\Z^2)$ and $\ell^2(\Z)$) so that $\sum_{j\in\Z}d_j=1$ (resp.  $\sum_{(k,\ell)\in\Z^2}d_{k,\ell}=1$ and
$\sum_{j\in\Z}c_j^2=1$).

\medskip

\section{Ideas and structure of the proof}\label{sec2}

In this section, we shall sketch the main ideas of the proof
  to Theorem \ref{thmmain}.
Given initial data $u_0$ satisfying the assumptions of Theorem \ref{thmmain}, classical Fujita-Kato theorem (\cite{fujitakato}) ensures that
$(NS)$ has a unique  solution
\beq \label{FTS} u\in \cE_{T^\ast}\eqdef
C\bigl([0,T^*[;H ^{\frac12}(\R^3)\bigr)\cap
L^2_{\rm loc}\bigl([0,T^*[;H^{\frac 32}(\R^3)\bigr),\eeq
where $T^\ast$ is the maximal existence time of this solution.
The goal of this paper
is to prove that $T^\ast=\infty$ under the smallness condition \eqref{S1eq1}.

   We first remark that the key ingredient used in \cite{CG10, CGZ, CZ15} is that
with a slow variable for the solution of $(NS)$, one can decompose  it as a sum of a large two-dimensional solution of
$(NS)$ with a parameter and a small three-dimensional  one.
 Here the assumption in \eqref{S1eq1}
motivates us to expect that $\pa_3u(t,x)$ should be small in some sense and therefore
the convection term, $u\cdot \na u,$ in $(NS)$ can be approximated by $u^\h\cdot\na_\h u,$  that is, under the smallness
condition \eqref{S1eq1}, we may approximate $(NS)$ by
 \beq \pa_t \baruh +\baruh\cdot\nablah\baruh
-\Delta \baruh=-\nablah \bar p,\quad\diveh \baruh = 0. \label{S2eq4} \eeq
However, we can not simply set the initial data $\baruh_0$ to be $\uh_0$,
which does not satisfy the horizontal divergence free condition.
To overcome this difficulty, let us recall the Biot-Sarvart's law for
 a $2$-D vector field $\uh:$
\begin{equation}\label{Helmholtz}
u^\h=u^\h_{\mathop{\rm curl}\nolimits}
+u^\h_{\mathop{\rm div}\nolimits},\quad \mbox{where}\quad
u^\h_{\mathop{\rm curl}\nolimits}\eqdef\nablah^\perp\Deltah^{-1}(\curlh u^\h)
,~u^\h_{\mathop{\rm div}\nolimits}\eqdef\nablah\Deltah^{-1}(\diveh\uh).
\end{equation}
Here and in the sequel, we always denote
$\curlh u^\h\eqdef\pa_1 u^2-\pa_2 u^1$, $\diveh u^\h\eqdef\pa_1 u^1+\pa_2 u^2,$
and $\nablah\eqdef(\pa_1,\pa_2),~\nablah^\perp\eqdef(-\pa_2,\pa_1)$,
$\Deltah\eqdef\pa_1^2+\pa_2^2$.
It is easy to observe that $\diveh u^\h_{\rm curl}=0$
and $\curlh u^\h_{\rm div}=0$.
This motivates us to  define $(\baru^\h,\bar p)$ via
\begin{equation}\label{eqtbaru}
\left\{\begin{array}{l}
\displaystyle \pa_t \baruh +\baruh\cdot\nablah\baruh
-\Delta \baruh=-\nablah \bar p,\\
\displaystyle \diveh \baruh = 0, \\
\displaystyle \baruh|_{t=0}=\baruh_0=\uhcurl|_{t=0}
=\nablah^\perp\Deltah^{-1}(\curlh\uh_0).
\end{array}\right.
\end{equation}
\eqref{eqtbaru} seemingly looks like  $(NS2.5D)$ in \cite{CZ15},
whose solution has been used to approximate the true solution of
$3$-D Navier-Stokes equations with a slow variable.
We remark that the system \eqref{eqtbaru} here is simpler due to the fact that there is no small parameter in the front of $\pa_3^2$ in \eqref{eqtbaru}.
For this system, we deduce from the proof of Theorem 1.2 of \cite{CZ15} that:

\begin{thm}
\label{S4thm1} {\sl Let~$\uh_0$ and~$\nabla_{\rm h}\uh_0$ be
in~$L^2(\R^3)\cap L^\infty(\R_\v;L^2(\R^2_\h))$. Then
~$\bar{u}^\h_0$ generates a unique global solution to~\eqref{eqtbaru}
in the space~$L^\infty (\R^+; L^2(\R^3)\cap L^\infty(\R_\v; H^1(\R^2_\h)))\cap L^2(\R^+; H^1(\R^3))$.

Moreover, if in addition~$\uh_0$ belongs to~$L^\infty(\R_\v;
H^{-\delta} (\R^2_\h))$ for some ~$\d$ in~$]0,1[$, then we have
\begin{equation}\begin{split}\label{S4eq1}
\int_0^{\infty} \|\nabla_{\rm h } \uh (t)\|_{L^\infty_{\rm v}(L^2_\h)}^2\,dt \leq & A_\d (\uh_0),
\end{split}\end{equation} where $A_\d (\uh_0)$ is determined by \eqref{S1eq3}.
}\end{thm}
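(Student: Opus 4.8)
The plan is to prove Theorem \ref{S4thm1} in two stages, following the strategy of Theorem 1.2 of \cite{CZ15}, but exploiting the simplification that \eqref{eqtbaru} carries no small parameter in front of $\pa_3^2$. The system \eqref{eqtbaru} should be viewed as a family, parametrized by the vertical variable $z=x_3$, of $2$-D Navier-Stokes equations in the horizontal variables, coupled only through the vertical diffusion term $\pa_3^2\baruh$. For the global existence part, I would first establish, by a standard Friedrichs/frequency-truncation scheme, local-in-time existence of a solution in the stated energy class. The basic $L^2(\R^3)$ energy estimate together with the horizontal enstrophy estimate then gives global bounds. Concretely, testing \eqref{eqtbaru} against $\baruh$ yields
\beno
\f12\f{d}{dt}\|\baruh(t)\|_{L^2}^2+\|\nabla\baruh(t)\|_{L^2}^2=0,
\eeno
since the horizontal convection term and the pressure term have no contribution after integration by parts (using $\diveh\baruh=0$). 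For the $L^\infty_\v(H^1_\h)$ bound one differentiates horizontally, or equivalently works with the horizontal vorticity $\baromss\eqdef\curlh\baruh$, which solves a transport--diffusion equation
\beno
\pa_t\baromss+\baruh\cdot\nablah\baromss-\Delta\baromss=0;
\eeno
testing this against $\baromss$ in $L^2(\R^2_\h)$ for each fixed $z$, and using that in two dimensions the vortex-stretching term is absent, gives $\f{d}{dt}\|\baromss(t,\cdot,z)\|_{L^2_\h}^2+2\|\nablah\baromss(t,\cdot,z)\|_{L^2_\h}^2\le0$ pointwise in $z$, hence the $L^\infty_\v(L^2_\h)$ control of $\baromss$, which combined with Biot--Savart gives the $L^\infty_\v(H^1_\h)$ bound on $\baruh$; integrating in time produces the $L^2(\R^+;H^1(\R^3))$ membership. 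Uniqueness in this class follows from an energy estimate on the difference of two solutions, using that $\baruh\in L^2_T(L^\infty_\h)$ (an interpolation between $L^2_\v(H^1_\h)$-type quantities and the $L^\infty_\v(H^1_\h)$ bound, via the $2$-D Ladyzhenskaya/Gagliardo--Nirenberg inequality $\|f\|_{L^\infty_\h}\lesssim\|f\|_{L^2_\h}^{1/2}\|\nabla_\h^2 f\|_{L^2_\h}^{1/2}$ fails at this regularity, so instead one uses $\|f\|_{L^4_\h}\lesssim\|f\|_{L^2_\h}^{1/2}\|\nabla_\h f\|_{L^2_\h}^{1/2}$ and Sobolev in $z$).

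The second, quantitative statement is the heart of the matter: the bound \eqref{S4eq1} on $\int_0^\infty\|\nablah\uh(t)\|_{L^\infty_\v(L^2_\h)}^2\,dt$ in terms of $A_\d(\uh_0)$ from \eqref{S1eq3}. Here I would run the negative-index estimate that is the source of the $\|\uh_0\|_{L^\infty_\v(B^{-\d}_{2,\infty})_\h}$ factor. Fix $z$ and apply the horizontal dyadic operator $\dhj$ to the vorticity equation; using the $2$-D structure, the commutator/Bony decomposition estimates, and Young's inequality one obtains, after summing with weights $2^{-2j\d}$, a differential inequality of the form
\beno
\f{d}{dt}\|\baruh(t)\|_{(B^{-\d}_{2,\infty})_\h}^2+\|\nablah\baruh(t)\|_{(B^{-\d}_{2,\infty})_\h}^2\lesssim\|\nablah\baruh(t)\|_{L^2_\h}^2\|\baruh(t)\|_{(B^{-\d}_{2,\infty})_\h}^2,
\eeno
uniformly in $z$, which upon Gronwall and the already established $\int_0^\infty\|\nablah\baruh\|_{L^\infty_\v(L^2_\h)}^2\,dt\lesssim\|\nablah\baruh_0\|_{\LVLH}^2$ keeps the negative Besov norm propagated with an exponential factor $\exp(C_\d(1+\|\uh_0\|_{\LVLH}^4))$. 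Finally, interpolating the dissipation: for each $z$,
\beno
\|\nablah\baruh(t,\cdot,z)\|_{L^2_\h}^2\lesssim\|\Deltah\baruh(t,\cdot,z)\|_{L^2_\h}^{\f{2}{1+\d}}\|\baruh(t,\cdot,z)\|_{(B^{-\d}_{2,\infty})_\h}^{\f{2\d}{1+\d}},
\eeno
and then integrating in time, using the $L^2_T(\dot H^2_\h)$-type control coming from the enstrophy dissipation, and the propagated negative-norm bound, produces precisely the combination
\beno
\f{\|\nablah\uh_0\|_{\LVLH}^2\|\uh_0\|_{L^\infty_\v(B^{-\d}_{2,\infty})_\h}^{2/\d}}{\|\uh_0\|_{\LVLH}^{2/\d}}+\|\uh_0\|_{\LVLH}^2
\eeno
times the exponential, i.e. $A_\d(\uh_0)$ as in \eqref{S1eq3}. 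The scaling exponents $2/\d$ are exactly those forced by making this interpolation dimensionally consistent.

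The main obstacle I anticipate is the propagation of the negative horizontal Besov regularity $L^\infty_\v(B^{-\d}_{2,\infty})_\h$ uniformly in the vertical variable $z$, since the vertical diffusion $\pa_3^2\baruh$ couples different heights and one cannot simply freeze $z$ in the nonlinear estimate; one must check that $\pa_3^2$ contributes favorably (it does, as a good term after testing) and that the paraproduct estimates in the horizontal variables produce constants independent of $z$. A secondary technical point is the low-frequency behavior: $B^{-\d}_{2,\infty}$ with $\d\in\,]0,1[$ is a genuinely negative-index space, so the Bony decomposition of $\diveh(\baruh\otimes\baruh)$ must be handled so that the worst term, $T_{\baruh}\nablah\baruh$ paired against $\dhj\baruh$ in the $B^{-\d}_{2,\infty}$ duality, closes — this is where one uses $\|\dhj f\|_{L^2_\h}\lesssim 2^{j\d}d_j\|f\|_{(B^{-\d}_{2,\infty})_\h}$ together with the $L^\infty_\v(L^2_\h)$ and dissipation bounds on $\nablah\baruh$ already in hand. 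Once these are in place, the rest is a careful but routine Gronwall-and-interpolation bookkeeping that I would not spell out in full here, referring instead to the corresponding estimates in \cite{CZ15} and adapting them to the present parameter-free setting.
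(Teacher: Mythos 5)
Your overall plan (read \eqref{eqtbaru} as two--dimensional Navier--Stokes with $x_3$ as a parameter, get energy/enstrophy bounds, propagate a horizontal negative-index Besov norm and convert it into time decay) is in the right spirit: the paper itself offers no independent argument for Theorem \ref{S4thm1} but deduces it from the proof of Theorem 1.2 of \cite{CZ15}, which follows exactly this scheme. But as written your sketch has two genuine gaps, both sitting at the point you yourself flag as the main obstacle. The frozen-$z$ differential inequalities you assert are false: testing the vorticity equation against $\barom$ in $L^2(\R^2_\h)$ at fixed $z$ produces, from the vertical diffusion, the contribution $\|\pa_3\barom(t,\cdot,z)\|_{L^2_\h}^2-\tfrac12\pa_3^2\|\barom(t,\cdot,z)\|_{L^2_\h}^2$, and the last term has no sign pointwise in $z$; what is true is that $h(t,z)=\|\barom(t,\cdot,z)\|_{L^2_\h}^2$ is a subsolution of the heat equation in $(t,z)$, so the $L^\infty_\v(L^2_\h)$ bound requires the maximum principle (equivalently, that $e^{t\pa_3^2}$ is a contraction on $L^\infty_\v$), not a frozen-$z$ ODE. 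The same device, not the unproved assertion that ``$\pa_3^2$ contributes favorably after testing,'' is what must carry the propagation of $L^\infty_\v(B^{-\d}_{2,\infty})_\h$; this is precisely the place where \eqref{eqtbaru} differs from the purely 2-D-with-parameter situation and where the work of \cite{CZ15} is actually used.

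More seriously, your Gronwall step is circular. You invoke as ``already established'' the bound $\int_0^\infty\|\nablah\baruh\|_{\LVLH}^2\,dt\lesssim\|\nablah\uh_0\|_{\LVLH}^2$; but your Part 1 only yields $\sup_t\|\nablah\baruh(t)\|_{\LVLH}\leq\|\nablah\uh_0\|_{\LVLH}$ and $\int_0^\infty\|\nabla\baruh\|_{L^2(\R^3)}^2\,dt\leq\tfrac12\|\baruh_0\|_{L^2}^2$, neither of which gives square integrability in time of the $L^\infty_\v(L^2_\h)$ norm --- that integrability is exactly the content of \eqref{S4eq1} you are trying to prove (and if it held with constant $\|\nablah\uh_0\|_{\LVLH}^2$, the hypothesis $\uh_0\in L^\infty_\v(H^{-\d}_\h)$ and the whole shape of $A_\d$ in \eqref{S1eq3} would be pointless). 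The structure of \eqref{S1eq3} tells you what the correct closure must look like: the Gronwall exponent has to be a quantity controlled solely by $\|\uh_0\|_{\LVLH}$ (whence the factor $\exp\bigl(C_\d(1+\|\uh_0\|_{\LVLH}^4)\bigr)$, of the type $\int_0^\infty\|\baruh\|_{L^2_\h}^2\|\nablah\baruh\|_{L^2_\h}^2\,dt\lesssim\|\uh_0\|_{\LVLH}^4$), and the first term of $A_\d$ is of the form $\|\nablah\uh_0\|_{\LVLH}^2\,T_0$ with $T_0\sim\bigl(\|\uh_0\|_{L^\infty_\v(B^{-\d}_{2,\infty})_\h}/\|\uh_0\|_{\LVLH}\bigr)^{2/\d}$, i.e.\ it comes from splitting time at the moment when the decay supplied by the negative-index norm takes over from the enstrophy bound --- a splitting your bookkeeping never performs. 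Finally, the interpolation you write down is not dimensionally consistent: $\dot H^1_\h$ lies between $(\dot B^{-\d}_{2,\infty})_\h$ and $\dot H^2_\h$ with weights $\tfrac1{2+\d}$ and $\tfrac{1+\d}{2+\d}$, not $\tfrac{\d}{1+\d}$ and $\tfrac1{1+\d}$, so the claimed ``exactly those forced by scaling'' exponents do not reproduce \eqref{S1eq3}. To turn the sketch into a proof you essentially have to carry out the actual argument of \cite{CZ15}, with the vertical coupling handled by maximum-principle/semigroup arguments and the time-decay/splitting mechanism made explicit.
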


Concerning the system \eqref{eqtbaru}, we have the following proposition:

\begin{prop}\label{propbaru}
{\sl Under the assumptions of Theorem \ref{S4thm1},  if in addition  both $\uh_0$ and $\pa_3\uh_0$
belong to $\cB^{\sigma_1,0}$ for some $\sigma_1\in ]-1,1[,$ \eqref{eqtbaru} has a unique global solution, \beq \label{ping1}
\baruh\in \frak{E}\eqdefa
L^\infty \left(\R^+; L^2(\R^3)\cap L^\infty(\R_\v; H^1(\R^2_\h))\right)\cap L^2\left(\R^+; H^1(\R^3)\cap L^\infty(\R_\v; H^1(\R^2_\h))\right) \eeq so that
 for any $t>0$ and for any $\sigma_2\in ]0,1[,$  we have
\begin{equation}\label{estimatebaru1}
\|\baruh\|_{{L}^\infty_t(B^{\sigma_1,\sigma_2}_{2,1})}
+\|\nabla\baruh\|_{{L}^2_t(B^{\sigma_1,\sigma_2}_{2,1})}
\leq C\|\uh_0\|_{\cB^{\sigma_1,0}}^{1-\sigma_2}
\|\pa_3\uh_0\|_{\cB^{\sigma_1,0}}^{\sigma_2}
\exp\bigl(CA_\d(\uh_0)\bigr),
\end{equation}
and
\begin{equation}\label{estimatebaru2}
\|\pa_3\baruh\|_{{L}^\infty_t(H^{\sigma_1,0})}
+\|\nabla\pa_3\baruh\|_{{L}^2_t(H^{\sigma_1,0})}
\leq C\|\pa_3\uh_0\|_{H^{\sigma_1,0}}
\exp\bigl(CA_\d(\uh_0)\bigr)
\end{equation} for $A_\d (\uh_0)$ being determined by \eqref{S1eq3}.
}\end{prop}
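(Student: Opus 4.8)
\textbf{Proof proposal for Proposition \ref{propbaru}.}

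The plan is to derive the estimates \eqref{estimatebaru1}--\eqref{estimatebaru2} by an energy method performed in anisotropic Littlewood-Paley blocks, using the global solution $\baruh$ supplied by Theorem \ref{S4thm1} together with the crucial time-integrability \eqref{S4eq1}. The global existence in the space $\frak{E}$ and the uniqueness follow directly from Theorem \ref{S4thm1} once we have the a priori bounds, so the heart of the matter is the propagation of the anisotropic regularity. First I would establish \eqref{estimatebaru2}: differentiating \eqref{eqtbaru} in $x_3$, setting $w\eqdef\pa_3\baruh$, one gets $\pt w+\baruh\cdot\nablah w-\D w=-\nablah\pa_3\bar p-w\cdot\nablah\baruh$. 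Applying the horizontal dyadic operator $\dhk$, taking the $H^{\sigma_1,0}$ scalar product with $\dhk w$, and summing, the transport term $\baruh\cdot\nablah w$ contributes only a commutator (its ``diagonal'' part vanishes by $\diveh\baruh=0$), which is controlled by $\|\nablah\baruh\|_{L^\infty_\v(L^2_\h)}$ times lower-order norms thanks to the anisotropic Bony decomposition; the reaction term $w\cdot\nablah\baruh$ is estimated the same way. After absorbing the $\D w$ dissipation and using Gr\"onwall's inequality, the exponential factor $\exp(CA_\d(\uh_0))$ arises precisely because $\int_0^\infty\|\nablah\baruh(t)\|_{L^\infty_\v(L^2_\h)}^2\,dt\le A_\d(\uh_0)$ by \eqref{S4eq1}. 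This gives \eqref{estimatebaru2} with $H^{\sigma_1,0}$ norms.

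Next I would interpolate to get the $B^{\sigma_1,\sigma_2}_{2,1}$ bound \eqref{estimatebaru1}. The point is that a vertical Besov norm $B^{\sigma_1,\sigma_2}_{2,1}$ with $\sigma_2\in]0,1[$ interpolates, in the vertical variable and at the Besov summability level, between $B^{\sigma_1,0}_{2,1}$-type control of $\baruh$ and $B^{\sigma_1,0}_{2,1}$-type control of $\pa_3\baruh$; more precisely, applying the vertical dyadic block $\dvl$ one uses $\|\dvl\baruh\|\lesssim\min(\|\baruh\|_{\cB^{\sigma_1,0}},2^{-\ell}\|\pa_3\baruh\|_{\cB^{\sigma_1,0}})$ on the data and a parallel inequality propagated in time, so that summing $2^{\ell\sigma_2}\|\dvl(\cdot)\|$ over $\ell\in\Z$ and splitting the sum at the frequency where the two bounds balance produces the product $\|\uh_0\|_{\cB^{\sigma_1,0}}^{1-\sigma_2}\|\pa_3\uh_0\|_{\cB^{\sigma_1,0}}^{\sigma_2}$. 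To make this rigorous one really runs the energy estimate directly on $\dvl\dhk\baruh$ in $L^2$, multiplies by $2^{k\sigma_1}2^{\ell\sigma_2}$, sums, and handles the trilinear term $(\dvl\dhk(\baruh\cdot\nablah\baruh)\mid\dvl\dhk\baruh)_{L^2}$ by the anisotropic paraproduct/remainder splitting, keeping the $L^\infty_\v(L^2_\h)$ norm on one factor and $L^2_\h L^\infty_\v$ or $L^2$ on the others; the dissipation $\|\nabla\dvl\dhk\baruh\|_{L^2}^2$ closes the estimate and Gr\"onwall again yields $\exp(CA_\d(\uh_0))$.

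The main obstacle I anticipate is the careful bookkeeping in the trilinear estimate for the convection term $\baruh\cdot\nablah\baruh$ at the anisotropic Besov level: one must distribute derivatives and Lebesgue exponents so that exactly one factor carries the $\|\nablah\baruh\|_{L^\infty_\v(L^2_\h)}$ norm (the only quantity whose square is known to be time-integrable with bound $A_\d$), while the remaining factors are absorbed either into the dissipation or into the already-controlled norms $\|\baruh\|_{L^\infty_\v(H^1_\h)}$ and $\|\nabla\baruh\|_{L^2(H^1)}$ from $\frak{E}$; the low-high versus high-low versus high-high interactions in the Bony decomposition must each be checked against the constraints $\sigma_1\in]-1,1[$ and $\sigma_2\in]0,1[$, the endpoint restrictions being exactly what keep the relevant Bernstein-type sums finite. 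A secondary technical point is justifying that the formal $\pa_3$-differentiation of \eqref{eqtbaru} and all the block-wise energy identities are legitimate for the solution in $\frak{E}$; this is standard by a smoothing/approximation argument (regularize the data, obtain uniform bounds, pass to the limit) and I would only indicate it briefly.
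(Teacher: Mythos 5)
Your overall architecture parallels the paper's: blockwise energy estimates for $\baruh$ and $\pa_3\baruh$ in which the convection and stretching terms are controlled through $\|\nablah\baruh\|_{\LVLH}$, with the time integrability \eqref{S4eq1} producing the factor $\exp(CA_\d(\uh_0))$, followed by a vertical low/high frequency split (balanced at an optimal cut-off) to obtain \eqref{estimatebaru1}; and for \eqref{estimatebaru2}, which lives in the $\ell^2$-based space $H^{\sigma_1,0}$, your Gr\"onwall argument is legitimate and is essentially the paper's computation in a different dressing. The global existence and uniqueness in $\frak{E}$ indeed come from Theorem \ref{S4thm1}, as you say.

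There is, however, a genuine gap at the step where you close the $\ell^1$-in-frequency estimates. The vertical interpolation you invoke for \eqref{estimatebaru1} requires bounds on $\baruh$ \emph{and} $\pa_3\baruh$ in (Chemin--Lerner versions of) $\cB^{\sigma_1,0}$ --- these are the paper's Propositions \ref{S4prop1} and \ref{S4prop2} --- and your proposal produces neither: your first step only yields the $H^{\sigma_1,0}$ bound for $\pa_3\baruh$, while your fallback of running the energy estimate directly on $\dvl\dhk\baruh$ with weights $2^{k\sigma_1}2^{\ell\sigma_2}$ and closing ``by Gr\"onwall'' does not work as stated. In an $\ell^1$-summed framework the blockwise trilinear estimates (the analogues of Lemmas \ref{S4lem1} and \ref{S4lem2}) come out, after time integration, in the form $d_k^2 2^{-2k\sigma_1}\|\nablah b\|_{\wt{L}^2_t(\cB^{\sigma_1,0})}\|b\|_{\wt{L}^2_{t,f}(\cB^{\sigma_1,0})}$, i.e.\ in terms of norms global in time; there is no pointwise-in-time differential inequality for the $\ell^1$-summed norm to which Gr\"onwall could be applied --- precisely the difficulty the paper flags before Definition \ref{defpz}. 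Nor can one simply bound $\|b\|_{\wt{L}^2_{t,f}}$ by $\bigl(\int_0^t f\,dt'\bigr)^{1/2}\|b\|_{\wt{L}^\infty_t(\cB^{\sigma_1,0})}$ and absorb, since $\int_0^\infty f\,dt\leq A_\d(\uh_0)$ is large. The missing device is the Paicu--Zhang weight the paper uses: the norm $\wt{L}^2_{t,f}$ together with $a_\la=a\exp\bigl(-\la\int_0^t f\,dt'\bigr)$, whose damping term $\la f(t)\|\dhk a_\la\|_{L^2}^2$ absorbs exactly this contribution for $\la$ large (a time-slicing argument over intervals where $\int f$ is small would be an acceptable substitute, but you do not mention it). Note also that you cannot sidestep the $\ell^1$ estimate by proving $H^{s,0}$ bounds at two exponents around $\sigma_1$ and interpolating via \eqref{S3eq6}, because the hypotheses only provide $\uh_0,\pa_3\uh_0\in\cB^{\sigma_1,0}$ and the right-hand side of \eqref{estimatebaru1} is expressed in that norm. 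Finally, doubly localizing with $\dvl\dhk$ would force you to control vertical paraproducts of $\baruh$ against $\nablah\baruh$, a complication the paper avoids entirely by localizing only horizontally and recovering the vertical regularity $\sigma_2$ through the low/high split between the $\cB^{\sigma_1,0}$ bounds of $\baruh$ and $\pa_3\baruh$; I would follow that cleaner route.
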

The definitions of the functional spaces
will be presented in Section \ref{Sect3}.

It is easy to check that  the system satisfied
by the difference $u-\baru$, where $\baru=(\baru^\h,0)$
\footnote{This is just for convenience of notations, and
one should keep in mind that $\baru^3=0.$} contains  quadric term
$\baruh\cdot\nablah u^3,$ which is not small.
The way to round this difficulty is to introduce a correction velocity, $\wt u$,
to be determined by
 \begin{equation}\label{eqtwtu}
\left\{\begin{array}{l}
\displaystyle \pa_t \wt u +\baruh\cdot\nablah\wt u
-\Delta \wt u=-\nabla \wt p,\\
\displaystyle \dive \wt u = 0, \\
\displaystyle \wt u^\h|_{t=0}=\wt{u}_0^\h=-\nablah\Deltah^{-1}(\pa_3 u^3_0),\quad \wt u^3|_{t=0}
=\wt u^3_0=u^3_0.
\end{array}\right.
\end{equation}
We emphasize that it is crucial to prove
that under the smallness condition \eqref{S1eq1},
$\wt{u}^\h$ is indeed small in
some critical spaces. The main difficulty lies in the estimate of the pressure term $\na_\h \wt{p}$.
As a matter of fact, by taking space divergence to \eqref{eqtwtu} and
using the condition that $\diveh\baruh=\dive\wt{u}=0,$ we obtain
\beno
-\D\wt{p}=\diveh\bigl(\wt{u}^\h\cdot\na_h\bar{u}^\h+\wt{u}^3\pa_3\bar{u}^\h\bigr).
\eeno
We decompose the pressure $\wt{p}$ into
$\wt{p}_1+\wt{p}_2$ with
\beq\label{S2eq10}
\wt{p}_1\eqdefa (-\D)^{-1}\diveh\bigl(\wt{u}^\h\cdot\na_\h\bar{u}^\h\bigr) \andf
\wt{p}_2\eqdefa (-\D)^{-1}\diveh\bigl(\wt{u}^3\pa_3\bar{u}^\h\bigr).
\eeq
In particular, with $\pa_3\baruh$ being sufficiently small,
we can prove that $\na_\h \wt{p}$ is indeed small in the case when
$\wt{u}^\h$ is small. Therefore,
we can propagate the smallness condition for $\wt{u}^\h(t)$ for $t>0.$

Concerning the linear system \eqref{eqtwtu}, we have the following {\it a priori} estimates:

\begin{prop}\label{propwtu}
{\sl Let $\baruh$ be the global solution of \eqref{eqtbaru} determined by Proposition  \ref{propbaru}. Let $\wt{u}$
be a smooth enough solution of \eqref{eqtwtu}.  Then for any $\s_1\in ]-1,1[,$  $\s_2\in ]0,1[,$  and for any $t>0,$ we have
\begin{equation}\begin{split}\label{S2eq1}
\|\wtu\|_{{L}^\infty_t(B^{\s_1,\s_2}_{2,1})}
+\|\nabla\wtu\|_{{L}^2_t(B^{\s_1,\s_2}_{2,1})}
\leq & C\|u_0\|_{\cB^{\s_1,0}}^{1-\s_2}\|\pa_3u_0\|_{\cB^{\s_1,0}}^{\s_2}
\exp\bigl(CA_\d(\uh_0)\bigr),
\end{split}\end{equation}
\begin{equation}\begin{split}\label{S2eq2}
\|\pa_3\wtu\|_{{L}^\infty_t(H^{\s_1,0})}
+\|\na\pa_3 \wtu\|_{{L}^2_t(H^{\s_1,0})}
\leq C\|\pa_3u_0\|_{H^{\s_1,0}}\exp\bigl(CA_\d(\uh_0)\bigr),
\end{split}\end{equation}
and
\beq \label{S2eq3}
\begin{split}
\|\wt{u}^\h\|_{L^\infty_t(H^{\s_2,0})}^2+\|\na\wt{u}^\h\|_{L^\infty_t(H^{\s_2,0})}^2
\leq C\|\pa_3&u_0\|_{H^{\s_2-1,0}}^2\\
&\times\exp\Bigl(C\|u_0\|_{\cB^{0,0}}\|\pa_3u_0\|_{\cB^{0,0}}
\exp\bigl(CA_\d(\uh_0)\Bigr).
\end{split}
\eeq
}\end{prop}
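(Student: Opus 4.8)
The plan is to perform energy estimates on the linear system \eqref{eqtwtu} in the anisotropic Besov/Sobolev scale, using the a priori control of the transport coefficient $\baruh$ supplied by Theorem \ref{S4thm1} and Proposition \ref{propbaru}. For the first two estimates, \eqref{S2eq1} and \eqref{S2eq2}, the key observation is that the coefficient field $\baruh$ is purely horizontal and horizontally divergence free, so that after localizing with the anisotropic blocks $\dhj\dvl$ and testing against $\dhj\dvl\wtu$, the transport term $(\baruh\cdot\nablah\wtu\,|\,\dhj\dvl\wtu)$ produces only commutator contributions. These are handled by the standard anisotropic Bony decomposition together with the commutator estimates of the Littlewood-Paley calculus; crucially the vertical derivative $\pa_3$ commutes with $\baruh\cdot\nablah$, which is why the same scheme also closes for $\pa_3\wtu$ and yields \eqref{S2eq2}. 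The pressure term $\nabla\wt p$ enters through $(\nabla\wt p\,|\,\dhj\dvl\wtu)$; using $\dive\wtu=0$ we integrate by parts and replace $\wt p$ by $\wt p_1+\wt p_2$ as in \eqref{S2eq10}, each of which is a zeroth-order horizontal Fourier multiplier applied to a product of $\wtu$ (or $\wt u^3$) with $\baruh$ (or $\pa_3\baruh$); the resulting bilinear terms are estimated by product laws in $B^{\s_1,\s_2}_{2,1}$ exactly as the transport commutators are. Summing the resulting differential inequalities over $(j,\ell)$ against the summability sequences $d_{j,\ell}$ and applying Gronwall in time, the exponential factor $\exp(CA_\d(\uh_0))$ comes precisely from $\int_0^\infty\|\nablah\baruh\|_{L^\infty_\vv(L^2_\h)}^2\,dt\le A_\d(\uh_0)$ of \eqref{S4eq1}, while the interpolation exponents $1-\s_2$ and $\s_2$ on the right-hand side of \eqref{S2eq1} are inherited from interpolating the $\s_2=0$ and $\s_2=1$ endpoint estimates for the initial data $\wt u_0$, whose horizontal and vertical components are built from $\uh_0,\pa_3u_0^3$ through the zeroth-order operators $\nablah\Deltah^{-1}$ and the identity — so that $\|\wt u_0\|_{\cB^{\s_1,0}}\lesssim\|u_0\|_{\cB^{\s_1,0}}$ and $\|\pa_3\wt u_0\|_{\cB^{\s_1,0}}\lesssim\|\pa_3u_0\|_{\cB^{\s_1,0}}$.

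The estimate \eqref{S2eq3} is the delicate one and is where the bootstrap on smallness of $\wt u^\h$ lives. Here I would work only with the horizontal component $\wt u^\h$, in the isotropic-in-$\xh$ Sobolev space $H^{\s_2,0}$, and perform an $H^{\s_2,0}$ energy estimate directly on the $\wt u^\h$-equation obtained from \eqref{eqtwtu}:
\[
\pt\wt u^\h+\baruh\cdot\nablah\wt u^\h-\D\wt u^\h=-\nablah\wt p,
\]
reading off that the forcing is $\wt u^3\pa_3\baruh$ plus the two pressure pieces $\nablah\wt p_1,\nablah\wt p_2$. The term $\nablah\wt p_2=\nablah(-\D)^{-1}\diveh(\wt u^3\pa_3\baruh)$ is controlled by $\|\wt u^3\pa_3\baruh\|$ in the appropriate norm, i.e.\ by $\|\pa_3\baruh\|$ times a norm of $\wt u^3$ already bounded (uniformly, not smallly) by \eqref{S2eq2}; this is the mechanism by which smallness of $\pa_3\baruh$ — which via \eqref{estimatebaru2} is comparable to $\|\pa_3\uh_0\|_{H^{\s_1,0}}$, ultimately small under \eqref{S1eq1} — feeds smallness into $\wt u^\h$. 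The term $\nablah\wt p_1=\nablah(-\D)^{-1}\diveh(\wt u^\h\cdot\nablah\baruh)$ is the genuinely dangerous one: it is linear in $\wt u^\h$ with coefficient $\nablah\baruh$, which is only in $L^2_t$, not $L^1_t$ or $L^\infty_t$. The point is to estimate it so that its contribution to $\tfrac{d}{dt}\|\wt u^\h\|_{H^{\s_2,0}}^2$ is bounded by $\|\nablah\baruh\|_{L^\infty_\vv(L^2_\h)}^2\|\wt u^\h\|_{H^{\s_2,0}}^2$ plus a small multiple of the dissipation $\|\nabla\wt u^\h\|_{H^{\s_2,0}}^2$ — using the horizontal anisotropy (two horizontal derivatives of dissipation against one factor $\nablah\baruh\in L^\infty_\vv L^2_\h$ and one factor $\wt u^\h$ in a mixed norm, via a Gagliardo–Nirenberg/Ladyzhenskaya-type inequality in $\xh$) — so that Gronwall against $\int_0^\infty\|\nablah\baruh\|_{L^\infty_\vv(L^2_\h)}^2\,dt\le A_\d(\uh_0)$ again produces the $\exp(CA_\d(\uh_0))$, which is then absorbed together with the $\wt u^3$-factor into the stated double exponential $\exp(C\|u_0\|_{\cB^{0,0}}\|\pa_3u_0\|_{\cB^{0,0}}\exp(CA_\d(\uh_0)))$. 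The prefactor $\|\pa_3u_0\|_{H^{\s_2-1,0}}^2$ arises because the initial datum $\wt u^\h_0=-\nablah\Deltah^{-1}(\pa_3u_0^3)$ gains one horizontal derivative, so $\|\wt u^\h_0\|_{H^{\s_2,0}}\lesssim\|\pa_3u_0^3\|_{H^{\s_2-1,0}}$, and the right-hand-side driving term $\wt u^3\pa_3\baruh$, when traced back, likewise carries $\|\pa_3u_0\|_{H^{\s_2-1,0}}$-weight through \eqref{S2eq2} applied with $\s_1=\s_2-1\in]-1,0[$.

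The main obstacle I anticipate is precisely the treatment of $\nablah\wt p_1$ in the proof of \eqref{S2eq3}: one must split it by Bony decomposition so that every piece is absorbable, never losing a derivative on $\wt u^\h$ beyond what the dissipation provides, and never needing more than $L^2_t$ control on $\nablah\baruh$; the endpoint Sobolev exponents $\s_2\in]0,1[$ and $\s_1\in]-1,1[$ are chosen exactly to make the relevant product laws and the interpolation inequalities hold with no borderline failure, and checking these conditions carefully — together with verifying that the operator $\nablah(-\D)^{-1}\diveh$ maps the anisotropic product spaces boundedly — is the technical heart of the argument. The estimates \eqref{S2eq1}–\eqref{S2eq2}, by contrast, are routine linear anisotropic energy estimates once Proposition \ref{propbaru} is in hand, since the coefficient $\baruh$ there appears only through already-controlled norms and the pressure is lower order.
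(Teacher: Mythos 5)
Your plan for \eqref{S2eq3} is essentially the paper's (an $H^{\s_2,0}$ energy estimate on the $\wt u^\h$-equation with the pressure split \eqref{S2eq10}, smallness entering through $\|\wt u^\h_0\|_{H^{\s_2,0}}\lesssim\|\pa_3u_0^3\|_{H^{\s_2-1,0}}$ and through $\pa_3\baruh$), but your proofs of \eqref{S2eq1}--\eqref{S2eq2} rest on a false claim: $\pa_3$ does \emph{not} commute with $\baruh\cdot\nablah$. The field $\baruh$ solves \eqref{eqtbaru} with data $\nablah^\perp\Deltah^{-1}(\curlh\uh_0)$, which depends on $x_3$, so $\pa_3\bigl(\baruh\cdot\nablah\wtu\bigr)=\baruh\cdot\nablah\pa_3\wtu+\pa_3\baruh\cdot\nablah\wtu$, and the commutator term $\pa_3\baruh\cdot\nablah\wtu$ is precisely the nontrivial part of \eqref{S2eq2}: the paper must estimate it by a Lemma \ref{S4lem2}-type bound (with the integrable weight $f(t)=\|\nablah\baruh\|_{L^\infty_\v(L^2_\h)}^2$), and this term is responsible for the restriction $\s_1\in]-1,1[$ in \eqref{S2eq2}, whereas the estimate \eqref{S4eq9} for $\wtu$ itself holds for $s_1\in]-1,2[$. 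Since \eqref{S2eq1} is then obtained by a vertical frequency splitting/interpolation between the $\cB^{\s_1,0}$ bounds for $\wtu$ and for $\pa_3\wtu$, your scheme of localizing directly with $\dhj\dvl$ in $B^{\s_1,\s_2}_{2,1}$ cannot be "routine": the vertical paraproducts/commutators with $\baruh$ require exactly the $\pa_3\baruh$ control you have declared unnecessary, and without the estimate of $\pa_3\baruh\cdot\nablah\wtu$ neither \eqref{S2eq2} nor \eqref{S2eq1} closes.

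Two secondary corrections. First, in the full-velocity estimates the pressure needs no estimating at all: since $\D_k^\h$ commutes with $\dive$ and $\dive\wtu=\dive\pa_3\wtu=0$, the pairings of $\na\wt p$ (resp.\ $\na\pa_3\wt p$) with $\D_k^\h\wtu$ (resp.\ $\D_k^\h\pa_3\wtu$) vanish identically; running them through \eqref{S2eq10} and product laws, as you propose, is not needed and would pollute \eqref{S2eq1} with extra factors not present in the stated bound. Second, the $\wt u^\h$-equation has no separate forcing $\wt u^3\pa_3\baruh$; that term enters only via $\wt p_2$, and its size is traced back through \eqref{estimatebaru2} for $\baruh$ (with $\s_1=\s_2-1$), not through \eqref{S2eq2}. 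Finally, the paper applies Gronwall in \eqref{S2eq3} against $\|\na_\h\baruh\|_{B^{0,\f12}_{2,1}}^2+\|\na_\h\wt u^3\|_{B^{0,\f12}_{2,1}}^2$, whose time integral is of size $\|u_0\|_{\cB^{0,0}}\|\pa_3u_0\|_{\cB^{0,0}}\exp(CA_\d(\uh_0))$ and produces the double exponential; your proposal to use only $\|\nablah\baruh\|_{L^\infty_\v(L^2_\h)}^2$ could plausibly handle the transport and $\wt p_1$ terms, but the $\wt u^3$-weight in $\wt p_2$ still forces the stated inner exponential, so the final bound is the same.
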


The proofs of the above two propositions will be presented in Section \ref{sec4}.

\smallbreak

With $\baru$ and $\wtu$ being determined respectively by the systems \eqref{eqtbaru} and \eqref{eqtwtu}, we can split the solution $(u,p)$ of $(NS)$ as
\begin{equation}\label{decomsol}
u=\bar u+\wt u+v,\quad p=\bar p+\wt p+q,
\end{equation}
It is easy to verify that the remainder term $(v,q)$
satisfies
\begin{equation}\label{eqtv}
\left\{\begin{array}{l}
\displaystyle \pa_t v^\h +(v+\wt u)\cdot\nabla u^\h
+\baruh\cdot\nablah v^\h
-\Delta v^\h=-\nablah q,\\
\displaystyle \pa_t v^3 +(v+\wt u)\cdot\nabla u^3
+\baruh\cdot\nablah v^3
-\Delta v^3=-\pa_3 (p-\wt p),\\
\displaystyle \dive v=0, \\
\displaystyle v|_{t=0}=0.
\end{array}\right.
\end{equation}

Let $\omega\eqdef\curlh\vh.$ Motivated by \cite{CZ5}, we can equivalently reformulate \eqref{eqtv} as
\begin{equation}\label{eqtv3om}
\left\{\begin{split}
&\pa_t\omega+(v+\wt u)\cdot\nabla\Omega^\h
+\baruh\cdot\nablah\omega-\Delta \omega\\
&\qquad\qquad\qquad=\Omega^\h\pa_3u^3
+\pa_2u^3\pa_3u^1-\pa_1u^3\pa_3u^2
+\pa_2\baruh\cdot\nablah\wt u^1-\pa_1\baruh\cdot\nablah\wt u^2, \\
&\pa_t v^3 +(v+\wt u)\cdot\nabla u^3
+\baruh\cdot\nablah v^3-\Delta v^3\\
&\qquad\qquad\qquad
=-\partial_3\D^{-1} \Bigl(\sum_{\ell,m=1}^3 \partial_\ell
u^m\partial_m u^\ell
-\sum_{\ell,m=1}^3\pa_\ell\baru^m\pa_m\wt u^\ell\Bigr),\\
& \omega|_{t=0}=0,~v^3|_{t=0}=0,
\end{split}\right.
\end{equation}
where $\Omega^\h\eqdef\curlh u^\h=\omega+\bar{\om}+\wtom$ and $\barom\eqdef\curlh\baruh,~\wtom\eqdef\curlh
\wt u^\h.$

Notice that the right-hand side of the $\om$ equation in \eqref{eqtv3om} contains terms either with $\pa_3u$ or $\wt{u}^\h,
$ which are small according to Propositions   \ref{propbaru} and \ref{propwtu}. Similar observation holds for the source terms in the $v^3$ equation of \eqref{eqtv3om}. Therefore, we have
reason to expect that both $\om$ and $v^3$ can exist and keep being small
in some critical spaces for all time.
To rigorously justify this expectation, we deduce from the discussions at the beginning of this section and
Propositions \ref{propbaru} and \ref{propwtu} that
 \eqref{eqtv} also has a unique maximal solution
$v\in \cE_{T^\ast}$, which is determined by \eqref{FTS}.

Let us denote
\begin{equation}\begin{split}\label{defMN}
&M(t)\eqdef \|\nabla v^3(t)\|_{H^{-\f12,0}}^2
+\|\omega(t)\|_{H^{-\f12,0}}^2
,\quad
N(t)\eqdef \|\nabla^2 v^3(t)\|_{H^{-\f12,0}}^2
+\|\nabla\omega(t)\|_{H^{-\f12,0}}^2.
\end{split}\end{equation}
We shall prove the following propositions in Section \ref{sec5}.

\begin{prop}\label{aprioriv}
{\sl  For any $t<T^\ast$, the maximal solution $v\in \cE_{T^\ast}$
of \eqref{eqtv} satisfies
\begin{equation}\begin{split}\label{ineqv3}
& \f{d}{dt}\|\nabla v^3\|_{\hm}^2+2\|\nabla^2 v^3\|_{\hm}^2
\leq\bigl(\f14+CM^{\f12}\bigr)N
+CM\bigl(\|\nabla\wt u\|_\bh^2+\|\nabla\baru^\h\|_\bh^2\bigr)\\
&+C\|\wt u^\h\|_{H^{\f12,0}}^2\|\nabla\wt u^3\|_\bh^2
+C\bigl(\|\nabla\pa_3\baruh\|_{H^{-\f12,0}}^2
+\|\nabla\pa_3\wt u\|_{H^{-\f12,0}}^2\bigr)
\bigl(\|\baruh\|_\bh^2+\|\wt u\|_\bh^2\bigr),
\end{split}\end{equation}
and
\begin{equation}\begin{split}\label{ineqom}
\f{d}{dt}\|\om& \|_{\hm}^2+2\|\nabla\om\|_{\hm}^2
\leq\bigl(\f14+CM^{\f12}+M\bigr)N
+CM\bigl(\|\nabla\wt u\|_\bh^2+\|\nabla\baru^\h\|_\bh^2\bigr)\\
&+C\|\nabla\wt u^\h\|_\hh^2
+C\bigl(\|\nabla\pa_3\baruh\|_{H^{-\f12,0}}^2
+\|\nabla\pa_3\wt u\|_{H^{-\f12,0}}^2\bigr)
\bigl(1+\|\baruh\|_\bh^2+\|\wt u\|_\bh^2\bigr).
\end{split}\end{equation}
}\end{prop}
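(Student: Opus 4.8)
\emph{Overall strategy.} Both inequalities are obtained by anisotropic energy estimates in $\hm=H^{-\f12,0}$ performed directly on the system \eqref{eqtv3om}, the dissipation $-\Delta$ furnishing the terms $2\|\nabla^2v^3\|_{\hm}^2$ and $2\|\nabla\om\|_{\hm}^2$ on the left. For \eqref{ineqom} one applies the horizontal block $\dhk$ to the $\om$--equation, takes the $L^2$ inner product with $\dhk\om$, multiplies by $2^{-k}$ and sums over $k\in\Z$; since $\|f\|_{\hm}^2\thicksim\sum_{k\in\Z}2^{-k}\|\dhk f\|_{L^2}^2$ and, after integrating by parts in the Laplacian, $\sum_{k}2^{-k}\|\nabla\dhk\om\|_{L^2}^2=\|\nabla\om\|_{\hm}^2$, the left-hand side is exactly $\f12\f{d}{dt}\|\om\|_{\hm}^2+\|\nabla\om\|_{\hm}^2$, so it remains to bound in absolute value the $\hm$ inner products of $\om$ with the transport part $(v+\wtu)\cdot\nabla\Omega^\h+\baruh\cdot\nablah\om$ and with the five source terms of that equation. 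For \eqref{ineqv3} one runs the analogous $\hm$--estimate after applying $\nabla$ to the $v^3$--equation of \eqref{eqtv3om} (the commutators produced when $\nabla$ hits the variable-coefficient convection terms being treated like the other transport terms), the dissipation now producing $\|\nabla^2v^3\|_{\hm}^2$. All these manipulations are legitimate on $[0,T^\ast[$ by the Fujita--Kato regularity of $v\in\cE_{T^\ast}$. The term-by-term estimates rely on the anisotropic Bony decomposition, on the product laws in $H^{s,0}$ and $B^{s_1,s_2}_{2,1}$ recalled in Section~\ref{Sect3}, on the anisotropic Bernstein inequalities, and --- decisively --- on the bookkeeping that in $u=\baru+\wtu+v$ every factor of the type $\pa_3u$, $\pa_3\baruh$, $\pa_3\wtu$ or $\wtu^\h$ is \emph{small} by Propositions~\ref{propbaru} and~\ref{propwtu}, while $\baruh$ and $\barom=\curlh\baruh$ are large but controlled in $\LVLH$, $\bh$, $\boh$, and every $v$, $\om$, $v^3$ or $\nabla v^3$ occurring in a dangerous factor is booked into $M$ or $N$.

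\emph{Transport terms.} Since $\diveh\baruh=0$, the contribution of $\baruh\cdot\nablah\om$ (resp.\ of $\baruh\cdot\nablah(\nabla v^3)$) becomes, after integrating by parts, a sum of commutators $\sum_{k\in\Z}2^{-k}\bigl([\dhk,\baruh\cdot\nablah]\om\,|\,\dhk\om\bigr)_{L^2}$, which the anisotropic commutator estimate bounds by $C\|\nablah\baruh\|_{\LVLH}\|\om\|_{\hm}\|\nabla\om\|_{\hm}$; Young's inequality then yields $\f14\|\nabla\om\|_{\hm}^2+CM\|\nabla\baruh\|_{\bh}^2$, using $\|\nablah\baruh\|_{\LVLH}\lesssim\|\nabla\baruh\|_{\bh}$. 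The self-interaction piece $v\cdot\nabla\om$ (resp.\ $v\cdot\nabla(\nabla v^3)$, obtained after writing $\nabla u^3=\nabla\wtu^3+\nabla v^3$ since $\baru^3=0$) is controlled by the scaling-critical product law and absorbed into $\bigl(CM^{\f12}+M\bigr)N+\f14N$. All remaining convection contributions --- $\wtu\cdot\nabla\om$, $(v+\wtu)\cdot\nabla(\barom+\wtom)$, $\wtu\cdot\nabla\wtu^3$, $\wtu\cdot\nabla v^3$ --- carry either a factor $\wtu$ or a factor $\pa_3\baruh$, $\pa_3\wtu$ (the vertical derivative landing on $\barom$ or on $\wtom=\curlh\wtu^\h$), hence are $\le\f14N$ plus the advertised products $C\|\wtu^\h\|_{H^{\f12,0}}^2\|\nabla\wtu^3\|_\bh^2$ and $C\bigl(\|\nabla\pa_3\baruh\|_{\hm}^2+\|\nabla\pa_3\wtu\|_{\hm}^2\bigr)\bigl(\|\baruh\|_\bh^2+\|\wtu\|_\bh^2\bigr)$, the extra $1$ in \eqref{ineqom} arising from the contribution in which the large $\barom$ itself plays the role of the controlled factor.

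\emph{Source terms.} For the $\om$--equation: in $\Omega^\h\pa_3u^3$ write $\Omega^\h=\om+\barom+\wtom$ and $\pa_3u^3=\pa_3\wtu^3+\pa_3v^3$; the pieces $\om\,\pa_3v^3$ go into $\bigl(CM^{\f12}+M\bigr)N$, the piece $\barom\,\pa_3v^3$ into $\f14N+CM\|\nabla\baruh\|_{\bh}^2$, and every other piece carries a small factor $\wtom$ or $\pa_3\wtu^3$. In $\pa_2u^3\pa_3u^1-\pa_1u^3\pa_3u^2$ one factor is $\pa_3u^j$ with $j\in\{1,2\}$, equal to $\pa_3v^j$ (feeding $N$) or to the small $\pa_3(\baru+\wtu)^j$; and $\pa_2\baruh\cdot\nablah\wtu^1-\pa_1\baruh\cdot\nablah\wtu^2$, recast in divergence form via $\diveh\baruh=0$, pairs the controlled $\nabla\baruh$ with the small $\nablah\wtu^\h$ and, by Young, gives $\f14N+C\|\nabla\wtu^\h\|_\hh^2$. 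For the $v^3$--equation the source is $\pD\bigl(\sum_{\ell,m=1}^3\pa_\ell u^m\pa_m u^\ell-\sum_{\ell,m=1}^3\pa_\ell\baru^m\pa_m\wtu^\ell\bigr)$ with $\pD$ bounded of order $-1$ on the relevant anisotropic Sobolev and Besov spaces; inserting $u=\baru+\wtu+v$, using $\baru^3=0$ and the symmetry $\sum_{\ell,m}\pa_\ell\wtu^m\pa_m\baru^\ell=\sum_{\ell,m}\pa_\ell\baru^m\pa_m\wtu^\ell$, the purely ``barred'' quadratic form reduces to
\[
\pD\sum_{\ell,m=1}^2\pa_\ell\baru^m\pa_m\baru^\ell=\Delta^{-1}\sum_{\ell,m=1}^2\bigl(\pa_\ell\pa_3\baru^m\,\pa_m\baru^\ell+\pa_\ell\baru^m\,\pa_m\pa_3\baru^\ell\bigr),
\]
which carries a $\pa_3\baruh$ factor; the surviving cross term $\pD\sum_{\ell,m}\pa_\ell\baru^m\pa_m\wtu^\ell$ carries either $\pa_3\baruh$ or $\wtu^\h$, the term $\pD\sum_{\ell,m}\pa_\ell\wtu^m\pa_m\wtu^\ell$ carries $\pa_3\wtu$ or $\wtu^\h$, the terms linear in $v$ are booked into $M,N$, and $\pD\sum_{\ell,m}\pa_\ell v^m\pa_m v^\ell$ is the self-interaction; estimating each summand by $\pD:H^{s-1,s'}\to H^{s,s'}$ together with the product laws produces exactly the four right-hand terms $\bigl(\f14+CM^{\f12}\bigr)N$, $CM\bigl(\|\nabla\wtu\|_\bh^2+\|\nabla\baru^\h\|_\bh^2\bigr)$, $C\|\wtu^\h\|_{H^{\f12,0}}^2\|\nabla\wtu^3\|_\bh^2$ and $C\bigl(\|\nabla\pa_3\baruh\|_{\hm}^2+\|\nabla\pa_3\wtu\|_{\hm}^2\bigr)\bigl(\|\baruh\|_\bh^2+\|\wtu\|_\bh^2\bigr)$ of \eqref{ineqv3}.

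\emph{Main obstacle.} The delicate point is the source term $\pD(\cdots)$ of the $v^3$--equation: one must exploit the exact algebraic form of the combination so that no summand built solely from the large data $\baruh,\barom$ (with no compensating $v$, $\pa_3(\cdot)$ or $\wtu^\h$) survives, and then distribute the order $-1$ gain of $\pD$ together with the extra derivative onto the two factors so as to stay inside the critical space $\hm$ --- which is where the anisotropic Bernstein inequalities, trading horizontal regularity against the $L^\infty_\v$ norm of $\baruh$, are indispensable. A secondary difficulty is the commutator estimate for the $\baruh\cdot\nablah$ transport in the negative-index space $\hm$, where $\baruh$ is controlled only through $\|\nablah\baruh\|_{\LVLH}$. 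Finally one must track the numerical constants so that all absorbable contributions sum to no more than $\bigl(\f14+CM^{\f12}\bigr)N$ in \eqref{ineqv3} and $\bigl(\f14+CM^{\f12}+M\bigr)N$ in \eqref{ineqom}, which preserves the coefficient $2$ in front of $\|\nabla^2v^3\|_{\hm}^2$ and $\|\nabla\om\|_{\hm}^2$ on the left.
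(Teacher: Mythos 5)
Your overall strategy is the paper's: energy estimates on the reformulated system \eqref{eqtv3om} in the critical anisotropic spaces, with the same decomposition $u=\baru+\wtu+v$, the same smallness bookkeeping, and the same key algebraic point that, after letting the $\pa_3$ coming from $\pD$ fall on the two factors, the purely barred quadratic part of the $v^3$-source carries a $\pa_3\baruh$. Your two deviations are harmless: pairing $\nabla$ of the $v^3$-equation with $\nabla v^3$ in $\hm$ is, by Plancherel, the same bilinear form as the paper's pairing of the undifferentiated equation with $v^3$ in $\hh\cap\hmo$ (the paper simply uses $\|\nabla v^3\|_{\hm}^2=\|v^3\|_{\hh}^2+\|v^3\|_{\hmo}^2$, which avoids any commutator); and your commutator treatment of the $\baruh\cdot\nablah$ transport with $\|\nablah\baruh\|_{\LVLH}$ can be replaced, as in the paper, by the direct product law $\|\baruh\cdot\nablah\om\|_{\hm}\lesssim\|\baruh\|_{\boh}\|\nabla\om\|_{\hm}$, both routes landing on $\f1{100}N+CM\|\nabla\baruh\|_{\bh}^2$.

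There is, however, one concrete misstep: the claim that every remaining convection contribution ``carries either a factor $\wtu$ or a factor $\pa_3\baruh$, $\pa_3\wtu$'' and hence is bounded by $\f14N$ plus the two advertised products. This fails precisely for the pieces in which a \emph{horizontal} derivative lands on $\barom$: in $v\cdot\nabla(\barom+\wtom)$ the part $v^\h\cdot\nablah\barom$ contains neither $\wtu$ nor any $\pa_3$, and in $\wtu^\h\cdot\nablah(\barom+\wtom)$, although $\wtu^\h$ is small, the other factor $\nablah\barom\sim\nablah^2\baruh$ is not controlled by anything on the right-hand side of \eqref{ineqom} (only $\|\nabla\baruh\|_{\bh}$ and $\|\nabla\pa_3\baruh\|_{\hm}$ are available, never two horizontal derivatives of $\baruh$), so a direct product estimate cannot close. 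The remedy, which is what the paper does and which your sketch omits for these terms, is to integrate by parts: using $\dive v=0$ one writes $\bigl(v\cdot\nabla(\barom+\wtom)\,|\,\om\bigr)_{\hm}=-\bigl(v(\barom+\wtom)\,|\,\nabla\om\bigr)_{\hm}\lesssim\|v\|_{\hh}\|\barom+\wtom\|_{\bh}\|\nabla\om\|_{\hm}\le\f1{100}N+CM\bigl(\|\nabla\baruh\|_{\bh}^2+\|\nabla\wtu\|_{\bh}^2\bigr)$, and using $\diveh\wtu^\h=-\pa_3\wtu^3$ one rewrites $\wtu^\h\cdot\nablah(\barom+\wtom)=\diveh\bigl((\barom+\wtom)\wtu^\h\bigr)+(\barom+\wtom)\pa_3\wtu^3$ and moves the $\diveh$ onto $\om$. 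Note also that the outputs of these terms are of the form $CM\|\nabla(\baruh,\wtu)\|_{\bh}^2$ (and $\|\wtu^\h\|_{\hh}^2\|\barom+\wtom\|_{\bh}^2$), not the two products you list in that sentence. With these routine corrections your argument coincides with the paper's proof.
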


In Section \ref{sec6}, we shall conclude the proof of Theorem \ref{thmmain}.
The strategy of the proof is as follows

\begin{prop}\label{S6prop1}
{\sl Under the assumptions of Theorem \ref{thmmain}, there exists some
positive constant $\eta$ such that
\beq\label{S6eq4}
\sup_{t\in [0,T^\ast[}\Bigl(M(t)+\int_0^tN(t')\,dt'\Bigr)
\leq \eta.
\eeq
}\end{prop}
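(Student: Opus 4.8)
\textbf{Proof proposal for Proposition \ref{S6prop1}.}

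The plan is to run a continuity argument on the quantity $\cN(t)\eqdef M(t)+\int_0^t N(t')\,dt'$. I would first combine the two differential inequalities \eqref{ineqv3} and \eqref{ineqom} from Proposition \ref{aprioriv}. Adding them and recalling $N=\|\nabla^2v^3\|_{\hm}^2+\|\nabla\om\|_{\hm}^2$, the "good" dissipative term on the left is $2N$, while the right-hand side contributes $\bigl(\f12+CM^{\f12}+M\bigr)N$ plus source terms. Hence, as long as we have an a priori bound guaranteeing $M(t)\le c_0$ for a suitably small absolute constant $c_0$ (so that $\f12+CM^{\f12}+M\le 1$), we can absorb the $N$-terms into the left and obtain
\begin{equation*}
\f{d}{dt}M(t)+N(t)\le C\,M(t)\,\bigl(\|\nabla\wtu\|_\bh^2+\|\nabla\baruh\|_\bh^2\bigr)+C\,R(t),
\end{equation*}
where $R(t)$ collects the genuinely small forcing terms: $\|\wt u^\h\|_{H^{\f12,0}}^2\|\nabla\wt u^3\|_\bh^2$, $\|\nabla\wtu^\h\|_\hh^2$, and $\bigl(\|\nabla\pa_3\baruh\|_{H^{-\f12,0}}^2+\|\nabla\pa_3\wtu\|_{H^{-\f12,0}}^2\bigr)\bigl(1+\|\baruh\|_\bh^2+\|\wtu\|_\bh^2\bigr)$.

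Next I would invoke Grönwall's inequality. Setting $\Phi(t)\eqdef\exp\bigl(C\int_0^t(\|\nabla\wtu\|_\bh^2+\|\nabla\baruh\|_\bh^2)\,dt'\bigr)$, which is bounded by $\exp\bigl(CA_\d(\uh_0)+C B_\d(u_0)\bigr)$ thanks to the $L^2_t(B^{\s_1,\s_2}_{2,1})$ bounds in \eqref{estimatebaru1} and \eqref{S2eq1} (choosing $\s_1=0$, $\s_2=\f12$, i.e. the $\bh$ norm), we get
\begin{equation*}
M(t)+\int_0^t N(t')\,dt'\le C\,\Phi(t)\int_0^t R(t')\,dt'.
\end{equation*}
It then remains to show $\int_0^\infty R(t')\,dt'$ is controlled by $\|\pa_3u_0\|_{H^{-\f12,0}}^2$ times an exponential of $A_\d$ and $B_\d$. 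Each piece is handled by Propositions \ref{propbaru} and \ref{propwtu}: the factor $\|\nabla\pa_3\baruh\|_{L^2_t(H^{-\f12,0})}^2$ comes from \eqref{estimatebaru2} with $\s_1=-\f12$, giving $\|\pa_3u_0\|_{H^{-\f12,0}}^2\exp(CA_\d)$; the factor $\|\nabla\pa_3\wtu\|_{L^2_t(H^{-\f12,0})}^2$ from \eqref{S2eq2} likewise; the factors $\|\baruh\|_\bh^2$, $\|\wtu\|_\bh^2$, $\|\nabla\wtu\|_{L^2_t\bh}^2$ are bounded by $\exp(CA_\d)$ and $\|u_0\|_{\cB^{0,0}}\cdots\exp(CA_\d)$ up to the needed interpolation powers; and crucially $\|\wtu^\h\|_{L^\infty_t(H^{\f12,0})}^2$ together with $\|\nabla\wtu^\h\|_{L^2_t(H^{\f12,0})}^2$ (taking $\s_2=\f12$ in \eqref{S2eq3}) contributes another factor $\|\pa_3u_0\|_{H^{-\f12,0}}^2\exp(\cdots)$, which is exactly where the quadratic smallness of $\|\pa_3u_0\|_{H^{-\f12,0}}$ in \eqref{S1eq1} is consumed. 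Collecting everything, $\int_0^\infty R\le \|\pa_3u_0\|_{H^{-\f12,0}}^2\exp\bigl(C(A_\d(\uh_0)+B_\d(u_0))\bigr)$, so that by \eqref{S1eq1} the right-hand side is $\le \e_0$.

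Finally I would close the bootstrap: define $T_0\eqdef\sup\{t\in[0,T^\ast[:\ M(s)\le c_0\ \text{for all}\ s\le t\}$. On $[0,T_0)$ the above chain gives $M(t)+\int_0^t N\le C\e_0$, which is $\le c_0/2$ once $\e_0$ is chosen small enough relative to the absolute constant $c_0$; since $M$ is continuous in $t$ (as $v\in\cE_{T^\ast}$) and $M(0)=0$, this strict improvement forces $T_0=T^\ast$, and then \eqref{S6eq4} holds with $\eta=C\e_0$. \textbf{The main obstacle} I anticipate is not the Grönwall mechanics but verifying that every forcing term on the right of \eqref{ineqv3}--\eqref{ineqom} carries \emph{at least one} genuinely small factor after time integration — in particular making sure the mixed term $\|\wtu^\h\|_{H^{\f12,0}}^2\|\nabla\wtu^3\|_\bh^2$ and the $\baruh$-only contributions $\|\nabla\pa_3\baruh\|^2(1+\|\baruh\|_\bh^2)$ truly reduce to $\|\pa_3u_0\|_{H^{-\f12,0}}^2$ (and not merely to $\|\pa_3u_0\|_{H^{\f12,0}}^2$ or an un-small quantity), since it is precisely the interplay between the indices $-\f12$ and $\f12$ in \eqref{S2eq2}, \eqref{S2eq3}, \eqref{estimatebaru2} that allows the exponent to land on the small norm; one must also double-check that the exponentials generated never exceed $\exp(C(A_\d+B_\d))$, matching the form of the hypothesis \eqref{S1eq1}.
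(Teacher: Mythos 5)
Your proposal is correct and follows essentially the same route as the paper: sum \eqref{ineqv3}--\eqref{ineqom}, absorb the $N$-terms under a bootstrap smallness assumption on $M$ (the paper bootstraps directly on $M(t)+\int_0^tN\le\eta$ with $\eta=\min\bigl(\f14,\f1{16C^2}\bigr)$, you on $M\le c_0$ — an immaterial difference), apply Gronwall with the exponential of $\|\nabla(\baruh,\wtu)\|_{L^2_t(\bh)}^2$, and control every forcing term via Propositions \ref{propbaru}, \ref{propwtu}, the interpolation \eqref{S3eq6}, and in particular \eqref{estimatebaru2}, \eqref{S2eq2} at $\s_1=-\f12$ and \eqref{S2eq3} at $\s_2=\f12$, so that the whole right-hand side reduces to $\|\pa_3u_0\|_{H^{-\f12,0}}^2\exp\bigl(C(A_\d(\uh_0)+B_\d(u_0))\bigr)\le\e_0$, closing the continuity argument exactly as in the paper.
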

With this estimate at hand, we then appeal to the following regularity criteria for the local Fujita-Kato solution of $(NS)$:
\begin{thm}[Theorem 1.5 of \cite{CZ5}]
\label{blowupBesovendpoint}
{\sl Let $u$ be a solution of~$(NS)$ in the
space~$\cE_{T^\ast}$. If the maximal existence time $T^\ast$ is finite,
then for any~$(p_{i,j})$ in~$]1,\infty[^9$, one has
\begin{equation}\label{blowupCZ5}
\sum_{1\leq i,j\leq3} \int_0^{T^\ast} \|\partial_{i}
u^{j}(t)\|^{p_{i,j}}_{B_{\infty,\infty}
^{-2+\f2{p_{i,j}}}} \,dt=\infty.
\end{equation}
}\end{thm}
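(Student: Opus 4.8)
The plan is to prove the contrapositive: assuming that the maximal time $T^\ast$ is finite, I will show that if the sum in \eqref{blowupCZ5} were finite then $u$ could be continued past $T^\ast$, contradicting maximality. The whole argument rests on a single \emph{a priori} estimate in the critical space $H^{\f12}$, which is exactly the space in which $\cE_{T^\ast}$ lives. First I would perform the $H^{\f12}$ energy estimate on $(NS)$: testing the momentum equation against $u$ in the $H^{\f12}$ inner product and using that $\dive u=0$ kills the pressure contribution, one gets
\begin{equation*}
\f12\f{d}{dt}\|u\|_{H^{\f12}}^2+\|\na u\|_{H^{\f12}}^2
=-\bigl(u\cdot\na u\,\big|\,u\bigr)_{H^{\f12}}
=-\sum_{i,j=1}^3\bigl(u^i\pa_iu^j\,\big|\,u^j\bigr)_{H^{\f12}}.
\end{equation*}
The decisive structural observation is that each summand contains \emph{exactly one} factor $\pa_iu^j$, so the nine components decouple and each can be estimated with its own exponent $p_{i,j}$ and its own critical regularity $-2+\f{2}{p_{i,j}}$.

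The heart of the proof is the component-wise product estimate: for each $(i,j)$, writing $p=p_{i,j}$ and $s=-2+\f{2}{p}\in\,]-2,0[$,
\begin{equation*}
\bigl|\bigl(u^i\pa_iu^j\,\big|\,u^j\bigr)_{H^{\f12}}\bigr|
\le C\,\|\pa_iu^j\|_{B^{-2+\f{2}{p}}_{\infty,\infty}}\,
\|u\|_{H^{\f12}}^{\f{2}{p}}\,\|\na u\|_{H^{\f12}}^{2-\f{2}{p}}.
\end{equation*}
This is the scaling-critical split, as one checks that all three factors have the correct homogeneity under \eqref{S1eq2}. To prove it I would apply Bony's decomposition $u^i\pa_iu^j=T_{u^i}\pa_iu^j+T_{\pa_iu^j}u^i+R(u^i,\pa_iu^j)$ and, in \emph{every} piece, place the distinguished factor $\pa_iu^j$ in the negative Besov space $B^{s}_{\infty,\infty}$ and the two velocity factors $u^i,u^j$ in homogeneous Sobolev spaces $H^{\sigma}$ with $\sigma\in[\f12,\f32]$, using the paraproduct and remainder laws together with Bernstein's inequality. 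The intermediate regularities are forced by matching homogeneity, and one verifies they can be kept inside $[\f12,\f32]$ for every $p\in\,]1,\infty[$; the powers in the displayed estimate then come out of the interpolation $\|u\|_{H^\sigma}\le\|u\|_{H^{\f12}}^{1-\theta}\|u\|_{H^{\f32}}^{\theta}$ together with $\|\na u\|_{H^{\f12}}=\|u\|_{H^{\f32}}$. A Young inequality with conjugate exponents $p$ and $p'=\f{p}{p-1}$ then converts this into
\begin{equation*}
\bigl|\bigl(u^i\pa_iu^j\,\big|\,u^j\bigr)_{H^{\f12}}\bigr|
\le\f{1}{18}\|\na u\|_{H^{\f12}}^2
+C\,\|\pa_iu^j\|_{B^{-2+\f{2}{p}}_{\infty,\infty}}^{p}\,\|u\|_{H^{\f12}}^2 .
\end{equation*}

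Summing the nine contributions and absorbing the nine copies of $\f1{18}\|\na u\|_{H^{\f12}}^2$ into the dissipation yields the Gronwall-type differential inequality
\begin{equation*}
\f{d}{dt}\|u\|_{H^{\f12}}^2+\|\na u\|_{H^{\f12}}^2
\le C\Bigl(\sum_{i,j=1}^3
\|\pa_iu^j\|_{B^{-2+\f{2}{p_{i,j}}}_{\infty,\infty}}^{p_{i,j}}\Bigr)\|u\|_{H^{\f12}}^2 .
\end{equation*}
By the contradiction hypothesis the coefficient in front of $\|u\|_{H^{\f12}}^2$ is integrable on $[0,T^\ast[$, so Gronwall's lemma gives both $\sup_{t\in[0,T^\ast[}\|u(t)\|_{H^{\f12}}<\infty$ and $\int_0^{T^\ast}\|\na u\|_{H^{\f12}}^2\,dt<\infty$. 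These two bounds force $u(t)$ to converge in $H^{\f12}$ as $t\uparrow T^\ast$, and feeding this limit as new initial data into the Fujita--Kato theorem \cite{fujitakato} continues the solution beyond $T^\ast$, contradicting the maximality of $T^\ast$. Hence $T^\ast<\infty$ forces \eqref{blowupCZ5}.

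The main obstacle is the component-wise product estimate, and within it the low--high paraproduct $T_{u^i}\pa_iu^j$, whose low-frequency factor $u^i$ lies only in $H^{\f12}\not\hookrightarrow L^\infty$; here I expect to need the cancellation coming from $\dive u=0$ (so that $\sum_j(u\cdot\na u^j\,|\,u^j)_{L^2}=0$) implemented through a commutator argument to recover the derivative that the unbounded low-frequency factor would otherwise cost. The second delicate point is uniformity as $p_{i,j}$ ranges over the whole open interval $]1,\infty[$, in particular the endpoints $p\to1$ (where $s\to0$) and $p\to\infty$ (where $s\to-2$), which is precisely where keeping all intermediate Sobolev indices inside $[\f12,\f32]$ and keeping the constants controlled requires the most care.
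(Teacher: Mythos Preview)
The paper does not prove this theorem: it is quoted verbatim as Theorem~1.5 of \cite{CZ5} and used as a black box in Section~\ref{sec6}. There is therefore no ``paper's own proof'' to compare against here; the actual proof lives in \cite{CZ5}.

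That said, your strategy---contrapositive, $H^{\f12}$ energy estimate, Gronwall, Fujita--Kato continuation---is exactly the skeleton of the argument in \cite{CZ5}, and your identification of the paraproduct $T_{u^i}\pa_iu^j$ as the crux is correct. One point deserves a warning. You plan to estimate each of the nine trilinear terms $\bigl(u^i\pa_iu^j\,\big|\,u^j\bigr)_{H^{\f12}}$ \emph{separately} with its own exponent $p_{i,j}$, and then invoke the divergence-free cancellation ``$\sum_j(u\cdot\nabla u^j\,|\,u^j)_{L^2}=0$'' via a commutator. But that cancellation is obtained by summing over $i$ in $u\cdot\nabla=\sum_iu^i\pa_i$ (for each fixed $j$), so it produces a commutator of the form $\sum_i\bigl([|D|,S_{k-1}^\h u^i]\,\pa_i u^j\,\big|\,u^j\bigr)$ in which the three derivatives $\pa_1u^j,\pa_2u^j,\pa_3u^j$ are already mixed. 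You must therefore \emph{first} sum over $i$ to create the commutator, and \emph{then} split the resulting three commutator terms and estimate each with its own $p_{i,j}$; the order matters, and your write-up should make this explicit. The Kato--Ponce type gain from $[|D|^{\f12},S_{k-1}u^i]$ is what compensates for $u^i\in H^{\f12}\not\hookrightarrow L^\infty$, and checking that this gain is uniform across $p\in\,]1,\infty[$ is, as you anticipate, where most of the work in \cite{CZ5} goes.
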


\setcounter{equation}{0}
\section{Anisotropic Littlewood-Paley Theory}\label{Sect3}

In this section, we shall collect some basic facts on anisotropic Littlewood-Paley theory from \cite{bcdbookk}.
Let us first recall the following anisotropic dyadic operators:
\begin{equation}\begin{split}\label{defparaproduct}
&\Delta_k^{\rm h}a=\cF^{-1}\bigl(\varphi(2^{-k}|\xi_{\rm h}|)\widehat{a}(\xi)\bigr),
 \quad \Delta_\ell^{\rm v}a =\cF^{-1}\bigl(\varphi(2^{-\ell}|\xi_3|)\widehat{a}(\xi)\bigr),\\
&S^{\rm h}_ka=\cF^{-1}\bigl(\chi(2^{-k}|\xi_{\rm h}|)\widehat{a}(\xi)\bigr),
\quad\ S^{\rm v}_\ell a =\cF^{-1}\bigl(\chi(2^{-\ell}|\xi_3|)\widehat{a}(\xi)\bigr),
\end{split}\end{equation}
where $\xi=(\xi_\h,\xi_3)$ and $\xi_{\rm h}=(\xi_1,\xi_2),$ $\cF a$ and
$\widehat{a}$ denote the Fourier transform of $a$,
while $\cF^{-1} a$ denotes its inverse Fourier transform,
$\chi(\tau)$ and $\varphi(\tau)$ are smooth functions such that
\begin{align*}
&\Supp \varphi \subset \Bigl\{\tau \in \R\,: \, \frac34 \leq
|\tau| \leq \frac83 \Bigr\}\quad\mbox{and}\quad \forall
 \tau>0\,,\ \sum_{j\in\Z}\varphi(2^{-j}\tau)=1,\\
& \Supp \chi \subset \Bigl\{\tau \in \R\,: \, |\tau| \leq
\frac43 \Bigr\}\quad\mbox{and}\quad \forall
 \tau\in\R\,,\ \chi(\tau)+ \sum_{j\geq 0}\varphi(2^{-j}\tau)=1.
\end{align*}

Let us recall the  anisotropic Bernstein inequalities from \cite{CZ1, Pa02}.

\begin{lem}\label{lemBern}
{\sl Let $\cB_{\h}$ (resp.~$\cB_{\rm v}$) a ball
of~$\R^2_{\h}$ (resp.~$\R_{\rm v}$), and~$\cC_{\h}$ (resp.~$\cC_{\rm v}$) a
ring of~$\R^2_{\h}$ (resp.~$\R_{\rm
v}$); let~$1\leq p_2\leq p_1\leq
\infty$ and ~$1\leq q_2\leq q_1\leq \infty.$ Then there holds:
\beno
\begin{split}
\mbox{if}\ \ \Supp \wh a\subset 2^k\cB_{\h}&\Rightarrow
\|\partial_{x_{\rm h}}^\alpha a\|_{L^{p_1}_{\rm h}(L^{q_1}_{\rm v})}
\lesssim 2^{k\left(|\alpha|+2\left(1/{p_2}-1/{p_1}\right)\right)}
\|a\|_{L^{p_2}_{\rm h}(L^{q_1}_{\rm v})};\\
\mbox{if}\ \ \Supp\wh a\subset 2^\ell\cB_{\rm v}&\Rightarrow
\|\partial_{x_3}^\beta a\|_{L^{p_1}_{\rm h}(L^{q_1}_{\rm v})}
\lesssim 2^{\ell\left(\beta+(1/{q_2}-1/{q_1})\right)} \|
a\|_{L^{p_1}_{\rm h}(L^{q_2}_{\rm v})};\\
\mbox{if}\ \ \Supp\wh a\subset 2^k\cC_{\h}&\Rightarrow
\|a\|_{L^{p_1}_{\rm h}(L^{q_1}_{\rm v})} \lesssim
2^{-kN}\sup_{|\alpha|=N}
\|\partial_{x_{\rm h}}^\alpha a\|_{L^{p_1}_{\rm
h}(L^{q_1}_{\rm v})};\\
\mbox{if}\ \ \Supp\wh a\subset2^\ell\cC_{\rm v}&\Rightarrow
\|a\|_{L^{p_1}_{\rm h}(L^{q_1}_{\rm v})} \lesssim 2^{-\ell N}
\|\partial_{x_3}^N a\|_{L^{p_1}_{\rm h}(L^{q_1}_{\rm v})}.\end{split}\eeno
}
\end{lem}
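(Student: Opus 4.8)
The final statement to prove is the anisotropic Bernstein inequalities (Lemma 3.1). Let me sketch how I would prove this.

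The plan is to reduce everything to the one-dimensional Bernstein inequality and Young's inequality for convolutions, treating the horizontal variable $x_\h$ and the vertical variable $x_3$ separately since the frequency localizations are in separate variables.

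For the first inequality (horizontal low-frequency/derivative bound), I would write $a = \widetilde\chi(2^{-k}|\xi_\h|) \wh a$ for a cutoff $\widetilde\chi$ equal to $1$ on the ball $\cB_\h$, so that $\partial_{x_\h}^\alpha a = 2^{k|\alpha|} g_k *_\h a$ where $g_k(x_\h) = 2^{2k} g(2^k x_\h)$ with $g = \cF^{-1}((i\xi_\h)^\alpha 2^{-k|\alpha|}\widetilde\chi(|\xi_\h|))$ a fixed Schwartz function of $x_\h$. Taking the $L^{p_1}_\h(L^{q_1}_\v)$ norm, applying Minkowski's inequality to pull the horizontal convolution outside the vertical norm, and then Young's inequality in $x_\h$ with $1 + 1/p_1 = 1/r + 1/p_2$ gives the factor $\|g_k\|_{L^r_\h} = 2^{2k(1-1/r)} \|g\|_{L^r} = 2^{2k(1/p_2 - 1/p_1)}\|g\|_{L^r_\h}$, which yields the claimed bound. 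The second inequality is proved the same way with the roles of $x_\h$ and $x_3$ exchanged: the convolution kernel now acts only in $x_3$, and Minkowski's inequality is used to bring it inside the horizontal norm, producing $2^{\ell(\beta + 1/q_2 - 1/q_1)}$.

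For the third and fourth inequalities (the ring/annulus cases), I would use that on the support of $\wh a$ in $2^k\cC_\h$ the symbol $|\xi_\h|^{-N}$ times a cutoff can be inverted: write $\wh a(\xi) = |\xi_\h|^{-2N}|\xi_\h|^{2N}\psi(2^{-k}|\xi_\h|)\wh a(\xi)$ for a cutoff $\psi\equiv 1$ on $\cC_\h$, and express $a$ as a fixed-kernel (rescaled) convolution applied to $\Delta_h^N$-type derivatives, then apply Young as before to obtain the gain $2^{-kN}$; the vertical case is analogous. Alternatively, and more cleanly, one observes that $a = \sum_{|\alpha| = N} \partial_{x_\h}^\alpha (h_k *_\h a)$ for suitable fixed kernels $h_k$ supported on the annulus, reducing to the previous reasoning.

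I do not expect any genuine obstacle here — the argument is entirely standard and consists of rescaling Schwartz kernels, Minkowski's inequality to commute a one-variable convolution with the other variable's norm, and Young's inequality for convolutions; the only point requiring a little care is bookkeeping the exponents in Young's inequality so that the powers of $2^k$ and $2^\ell$ come out exactly as stated, and making sure the convolution kernels (which depend on $k$ or $\ell$ only through rescaling) have bounds uniform in $k,\ell$. I would therefore present the proof of the first inequality in detail and indicate that the remaining three follow by the same method (swapping $\h\leftrightarrow\vv$, or composing with an inverse-symbol multiplier on the annulus), referring to \cite{bcdbookk, CZ1, Pa02} for the routine details.
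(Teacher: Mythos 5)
Your proof is correct and is essentially the standard argument: the paper itself does not prove Lemma \ref{lemBern} but quotes it from \cite{CZ1, Pa02} (see also \cite{bcdbookk}), where precisely this scheme --- rescaled Schwartz kernels, Minkowski's inequality in the non-convolution variable, and Young's inequality, with the annulus case handled by an inverse-symbol multiplier --- is carried out. Two minor bookkeeping points: the kernel $g$ should be defined without the factor $2^{-k|\alpha|}$ so that it is genuinely independent of $k$ (the rescaling $g_k(x_{\rm h})=2^{2k}g(2^kx_{\rm h})$ then carries all the $k$-dependence), and in the ring case the factorization through $|\xi_{\rm h}|^{-2N}|\xi_{\rm h}|^{2N}$ by itself yields derivatives of order $2N$; your second formulation, which distributes one factor $(i\xi_{\rm h})^\alpha$ onto $a$ and the other into the kernel using $|\xi_{\rm h}|^{2N}=\sum_{|\alpha|=N}\frac{N!}{\alpha!}\xi_{\rm h}^{2\alpha}$, is the correct way to obtain the stated bound with $\sup_{|\alpha|=N}\|\partial_{x_{\rm h}}^\alpha a\|$.
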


\begin{defi}\label{anibesov}
{\sl Let us define the anisotropic Besov space $B^{s_1,s_2}_{p,q}$  (with  usual adaptation when $q$  equal $\infty$) as the space of
homogenous  tempered distributions $u$ so that
$$
\|u\|_{B^{s_1,s_2}_{p,q}}\eqdef \Bigl(\sum_{k\in\Z}\sum_{\ell\in\Z}
2^{qks_1}2^{q\ell s_2}\|\D_k^{\rm h}
\D_\ell^{\rm v}u\|_{L^p}^{q}\Bigr)^{1/{q}}<\infty.
$$
}\end{defi}

We remark that
$B^{s_1,s_2}_{2,2}$ coincides with the classical anisotropic Sobolev space $H^{s_1,s_2}$ given by Definition \ref{defanisob}.

\begin{defi}\label{def2}
{\sl  Let $s\in\R,$ $p\in [1,\infty]$ and
$a\in{\mathcal S}_h'(\R^3),$ we define
$$\|a\|_{\cB^{s,0}}\eqdef\sum_{k\in\Z}2^{k s}\|\D_k^\h a\|_{L^2},$$
and the corresponding Chemin-Lerner type norm  (see \cite{CL})
$$\|a\|_{\wt{L}^p_T(\cB^{s,0})}\eqdefa
 \sum_{k\in\Z}2^{k s}\|\D_k^\h a\|_{L^p_T(L^2)}.$$
}\end{defi}

\begin{rmk} For any $a\in H^{s,0},$ we deduce from Fourier-Plancherel inequality that
\beno
\begin{split}
\|\D_k^\h a\|_{L^2}=&\Bigl(\sum_{\ell\in\Z}\|\D_k^\h\D_\ell^\v a\|_{L^2}^2\Bigr)^{\f12}\\
\lesssim &\Bigl(\sum_{\ell\in\Z}c_{k,\ell}^22^{-2ks}\Bigr)^{\f12}\|a\|_{H^{s,0}}\lesssim c_k2^{-ks}\|a\|_{H^{s,0}},
\end{split}
\eeno where $\left(c_{k,\ell}\right)_{(k,\ell)\in\Z^2}$ (resp.  $\left(c_{k}\right)_{k\in\Z}$ ) is a generic element of
$\ell^2(\Z^2)$ (resp. $\ell^2(\Z^2)$) so that $\sum_{(k,\ell)\in\Z^2}c_{k,\ell}^2=1$ (resp. $\sum_{k\in\Z}c_{k}^2=1$).

Then for any $s\in ]s_1,s_2[$ and for any integer $N$, we find
\beno
\begin{split}
\|a\|_{\cB^{s,0}}=&\sum_{k\leq N}2^{ks}\|\D_k^\h a\|_{L^2}+\sum_{k>N}2^{ks}\|\D_k^\h a\|_{L^2}\\
\leq &\sum_{k\leq N}c_k 2^{k(s-s_1)}\|a\|_{H^{s_1,0}}+\sum_{k>N}2^{-k(s_2-s)}\|a\|_{H^{s_2,0}}\\
\lesssim & 2^{N(s-s_1)}\|a\|_{H^{s_1,0}}+2^{-N(s_2-s)}\|a\|_{H^{s_2,0}}.
\end{split}
\eeno
Taking the integer $N$ so that
$
2^{N(s_2-s_1)}\sim \f{\|a\|_{H^{s_2,0}}}{\|a\|_{H^{s_1,0}}}
$
leads to
\beq \label{S3eq6}
\|a\|_{\cB^{s,0}}\leq C\|a\|_{H^{s_1,0}}^{\f{s_2-s}{s_2-s_1}}\|a\|_{H^{s_2,0}}^{\f{s-s_1}{s_2-s_1}}.
\eeq
\end{rmk}

On the other hand, to overcome the difficulty that one can not use Gronwall's inequality in the Chemin-Lerner type space, we  need the
time-weighted Chemin-Lerner norm, which was introduced by Paicu and the second  author in \cite{PZ1}:
\begin{defi}\label{defpz}
Let $f(t)\in L^1_{\rm{loc}}(\R_+)$, $f(t)\geq 0$. We denote
$$\|u\|_{\widetilde L^2_{T,f}(\cB^{s,0})}=\sum_{k\in\Z} 2^{k s}
\Bigl(\int_0^Tf(t)\|\Delta_k^\h u(t)\|_{L^2}^2\,dt\Bigr)^{\frac 12}.$$
\end{defi}

We also need  Bony's decomposition from \cite{Bo81} for the horizonal variables to study the law of product in Besov spaces:
\begin{equation}\label{bony}\begin{split}
& ab=T^\h_a b+{T}^\h_b a+ R^\h(a,b)\quad\mbox{with}\\
T^\h_a b=\sum_{j\in\Z}S^\h_{j-1}a\Delta^\h_jb,\quad&
R^\h(a,b)=\sum_{j\in\Z}\Delta^\h_ja\widetilde{\Delta}^\h_{j}b
\quad \mbox{where}\quad \widetilde{\Delta}^\h_{j}\eqdef{\Delta}^\h_{j-1}
+{\Delta}^\h_{j}+{\Delta}^\h_{j+1}.
\end{split}\end{equation}

As an application of the above basic elements on Littlewood-Paley theory, we present
the following law of product in $\cB^{s,0}.$

\begin{lem}\label{lemproductlawBh}
{\sl For any $s_1,~s_2\leq 1$ satisfying $s_1+s_2>0$,
one has
\beq \label{S3eq3}\|ab\|_{\cB^{s_1+s_2-1,0}}\lesssim\|a\|_{B^{s_1,\f12}_{2,1}}
\|b\|_{\cB^{s_2,0}}.\eeq
}\end{lem}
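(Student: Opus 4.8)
The plan is to feed $ab$ into Bony's horizontal decomposition \eqref{bony}, $ab=T^\h_a b+T^\h_b a+R^\h(a,b)$, and estimate each of the three pieces in $\cB^{s_1+s_2-1,0}$. The only analytic inputs are the anisotropic Bernstein inequalities of Lemma~\ref{lemBern} and the one--dimensional fact $\|g\|_{L^\infty_\v}\lesssim\sum_{\ell}2^{\ell/2}\|\Delta^\v_\ell g\|_{L^2_\v}$, which, after taking an $L^2_\h$ norm and using Minkowski's inequality, gives
$\|\Delta^\h_k a\|_{L^2_\h(L^\infty_\v)}\lesssim\sum_{\ell}2^{\ell/2}\|\Delta^\h_k\Delta^\v_\ell a\|_{L^2}$,
together with $\|\Delta^\h_k a\|_{L^\infty_\h(L^\infty_\v)}\lesssim 2^k\sum_\ell 2^{\ell/2}\|\Delta^\h_k\Delta^\v_\ell a\|_{L^2}$ by an extra horizontal Bernstein. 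This is precisely the mechanism through which the vertical index $\f12$ in $B^{s_1,\f12}_{2,1}$ buys an $L^\infty$ in $x_3$ for the factor $a$; the factor $b$ is only ever used in $L^2$, consistently with its membership in $\cB^{s_2,0}$.

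For the low--high term $T^\h_a b=\sum_j S^\h_{j-1}a\,\Delta^\h_j b$ I would use $\|S^\h_{j-1}a\,\Delta^\h_j b\|_{L^2}\le\|S^\h_{j-1}a\|_{L^\infty(\R^3)}\|\Delta^\h_j b\|_{L^2}$ and, by the remark above, bound $\|S^\h_{j-1}a\|_{L^\infty}$ by $\sum_{k'\le j-2}2^{k'(1-s_1)}\bigl(2^{k's_1}\sum_\ell 2^{\ell/2}\|\Delta^\h_{k'}\Delta^\v_\ell a\|_{L^2}\bigr)$; since $s_1\le1$ the weight $2^{k'(1-s_1)}$ is non-decreasing in $k'$, so this is $\lesssim 2^{j(1-s_1)}\|a\|_{B^{s_1,\f12}_{2,1}}$ (the endpoint $s_1=1$ being handled by $\sum_{k'}d_{k'}\le1$ rather than by a geometric series). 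As $\Delta^\h_k(S^\h_{j-1}a\,\Delta^\h_j b)\ne0$ forces $|j-k|\le N_0$, multiplying by $2^{k(s_1+s_2-1)}$, inserting $\|\Delta^\h_j b\|_{L^2}\lesssim d_j2^{-js_2}\|b\|_{\cB^{s_2,0}}$ and reindexing leaves $\sum_k\sum_{|j-k|\le N_0}d_j$, an $\ell^1(\Z)$ sum, which yields the bound. The term $T^\h_b a=\sum_j S^\h_{j-1}b\,\Delta^\h_j a$ is symmetric: I would split $\|S^\h_{j-1}b\,\Delta^\h_j a\|_{L^2}\le\|S^\h_{j-1}b\|_{L^\infty_\h(L^2_\v)}\|\Delta^\h_j a\|_{L^2_\h(L^\infty_\v)}$, bound the first factor by $2^{j(1-s_2)}\|b\|_{\cB^{s_2,0}}$ (using $s_2\le1$) and the second by $d_j2^{-js_1}\|a\|_{B^{s_1,\f12}_{2,1}}$, again with $|j-k|\le N_0$.

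For the remainder $R^\h(a,b)=\sum_j\Delta^\h_j a\,\widetilde\Delta^\h_j b$ one has $\Delta^\h_k(\Delta^\h_j a\,\widetilde\Delta^\h_j b)\ne0$ only for $j\ge k-N_0$. Here I would apply Bernstein in the form $\|\Delta^\h_k g\|_{L^2_\h(L^2_\v)}\lesssim2^k\|g\|_{L^1_\h(L^2_\v)}$ to gain a factor $2^k$, then Hölder in $x_\h$ to get $\|\Delta^\h_j a\,\widetilde\Delta^\h_j b\|_{L^1_\h(L^2_\v)}\le\|\Delta^\h_j a\|_{L^2_\h(L^\infty_\v)}\|\widetilde\Delta^\h_j b\|_{L^2}\lesssim d_j2^{-js_1}\|a\|_{B^{s_1,\f12}_{2,1}}\cdot d_j2^{-js_2}\|b\|_{\cB^{s_2,0}}$. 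Multiplying by $2^{k(s_1+s_2-1)}$ produces $2^{(k-j)(s_1+s_2)}$; summing first over $k\le j+N_0$ converges precisely because $s_1+s_2>0$, and then summing over $j$ against the $\ell^1$ sequences closes the estimate. No step presents a genuine obstacle; the only delicate points are the routine bookkeeping of the generic $\ell^1(\Z)$-sequences and, as indicated, the endpoint cases $s_1=1$ or $s_2=1$, where one must avoid summing a would-be geometric series and instead exploit $\sum_{k'}d_{k'}\le1$.
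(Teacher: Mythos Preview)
Your proposal is correct and follows essentially the same route as the paper: Bony's horizontal decomposition \eqref{bony}, the bound $\|S^\h_{j-1}a\|_{L^\infty}\lesssim 2^{j(1-s_1)}\|a\|_{B^{s_1,\frac12}_{2,1}}$ for $T^\h_a b$, the mixed-norm H\"older splitting for $T^\h_b a$, and the Bernstein gain of $2^k$ together with $s_1+s_2>0$ for $R^\h(a,b)$. The only cosmetic difference is that the paper orders its mixed norms as $L^\infty_\v(L^2_\h)$ and $L^2_\v(L^\infty_\h)$ rather than your $L^2_\h(L^\infty_\v)$ and $L^\infty_\h(L^2_\v)$, but either pairing works for the H\"older step and both are controlled by the same Bernstein arguments.
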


\begin{proof} According to \eqref{bony}, we split the product $ab$
into three parts: $T_a^\h b,$ $T_b^\h a$ and $R^\h(a,b).$
Due to $s_1\leq 1,$ we deduce from Lemma \ref{lemBern} that
\beno
\begin{split}
\|S_k^\h a\|_{L^\infty}\lesssim& \sum_{\substack{k'\leq k-1\\ \ell\in\Z}}
2^{k'}2^{\f{\ell}2}\|\D_{k'}^\h\D_\ell^\v a\|_{L^2}\\
\lesssim & \sum_{\substack{k'\leq k-1\\ \ell\in\Z}}
d_{k',\ell}2^{k'(1-s_1)}\|a\|_{B^{s_1,\f12}_{2,1}}
\lesssim  2^{k(1-s_1)}\|a\|_{B^{s_1,\f12}_{2,1}},
\end{split}
\eeno where  $\left(d_{k,\ell}\right)_{(k,\ell)\in\Z^2}$ (resp. $\left(d_{k}\right)_{k\in\Z}$) designates a generic element on the unit
sphere of  $\ell^1(\Z^2)$  (resp. $\ell^1(\Z)$) so that  $\sum_{(k,\ell)\in\Z^2}d_{k,\ell}=1$ (resp. $\sum_{k\in\Z}d_{k}=1$).
Then by virtue of  the support properties to the Fourier transform of the terms in $T^\h_ab,$ we infer
\beno\begin{split}
\|\dhk T^\h_a b\|_{L^2}
&\lesssim\sum_{|k'-k|\leq4}
\|S^\h_{k'-1}a\|_{L^\infty}\|\dhkp b\|_{L^2}\\
&\lesssim\sum_{|k'-k|\leq 4}d_{k'}2^{k'(1-s_1-s_2)}\|a\|_{B^{s_1,\f12}_{2,1}}\|b\|_{\cB^{s_2,0}}\\
&\lesssim d_{k}2^{k(1-s_1-s_2)}\|a\|_{B^{s_1,\f12}_{2,1}}\|b\|_{\cB^{s_2,0}}.
\end{split}
\eeno
Whereas it follows from Lemma \ref{lemBern} that
\beq \label{S3eq4}
\|\D_k^\h a\|_{L^\infty_\v(L^2_\h)}\lesssim\sum_{\ell\in \Z}2^{\f{\ell}2}\|\D_k^\h\D_\ell^\v a\|_{L^2}
\lesssim\sum_{\ell\in \Z}d_{k,\ell}2^{ks_1}\|a\|_{B^{s_1,\f12}_{2,1}}\lesssim d_k2^{ks_1}\|a\|_{B^{s_1,\f12}_{2,1}},
\eeq
and due to $s_2\leq 1,$ we have $\|S_k^\h b\|_{L^2_\v(L^\infty_\h)}\lesssim 2^{k(1-s_2)}\|b\|_{\cB^{s_2,0}}.$ As a result, it comes out
\begin{equation*}\begin{split}\label{3.8}
\|\dhk T^\h_b a\|_{L^2}
&\lesssim\sum_{|k'-k|\leq 4}\|S^\h_{k'-1}b\|_{L^2_\v(L_\h^\infty)}
\|\dhkp a\|_{L^\infty_\v(L_\h^2)}\\
&\lesssim\sum_{|k'-k|\leq 4}d_{k'}2^{k'(1-s_1-s_2)}\|a\|_{B^{s_1,\f12}_{2,1}}\|b\|_{\cB^{s_2,0}}\\
&\lesssim d_{k}2^{k(1-s_1-s_2)}\|a\|_{B^{s_1,\f12}_{2,1}}\|b\|_{\cB^{s_2,0}}.
\end{split}\end{equation*}
For the remainder term, in view of \eqref{S3eq4} and the assumption that
$s_1+s_2>0$, we find
\begin{align*}
\|\dhk R^\h(a,b)\|_{L^2}
&\lesssim 2^k\sum_{k'\geq k-3}
\|\dhkp a\|_{L_\v^\infty(L^2_h)}
\|\wt\D^\h_{k'} b\|_{L^2}\\
&\lesssim 2^k\sum_{k'\geq k-3}d_{k'}2^{-k'(s_1+s_2)} \|a\|_{B^{s_1,\f12}_{2,1}}\|b\|_{\cB^{s_2,0}}
\\
&\lesssim d_{k}2^{k(1-s_1-s)}\|a\|_{B^{s_1,\f12}_{2,1}}\|b\|_{\cB^{s_2,0}}.
\end{align*}

By summing up the above estimates, we conclude the proof of \eqref{S3eq3}.
\end{proof}

The following law of product will be frequently used
in the rest of this  paper.

\begin{lem}[A special case of Lemma 4.5 in \cite{CZ5}]\label{lemproductlaw}
{\sl For any $s_1,~s_2<1,~s_3\leq 1$ with $s_1+s_2>0,~s_1+s_3>0$,
 and for any $r_1,~r_2<\f12,~r_3\leq\f12$ with $r_1+r_2>0,~r_1+r_3>0$,
we have
\begin{equation}\label{lemproduct1}
\|ab\|_{H^{s_1+s_2-1,\tau_1+\tau_2-\f12}}\lesssim\|a\|_{H^{s_1,\tau_1}}
\|b\|_{H^{s_2,\tau_2}},
\end{equation}
\begin{equation}\label{lemproduct2}
\|ab\|_{H^{s_1+s_3-1,\tau_1+\tau_3-\f12}}\lesssim\|a\|_{H^{s_1,\tau_1}}
\|b\|_{B^{s_3,\tau_3}_{2,1}}.
\end{equation}
}\end{lem}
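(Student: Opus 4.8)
The plan is to reduce both \eqref{lemproduct1} and \eqref{lemproduct2} to the double Bony decomposition --- once in the horizontal and once in the vertical variable --- combined with the anisotropic Bernstein inequalities of Lemma \ref{lemBern}, exactly in the spirit of the proof of Lemma \ref{lemproductlawBh}; alternatively one may simply quote Lemma 4.5 of \cite{CZ5}, of which this statement is the announced special case. I will describe the self-contained argument for \eqref{lemproduct1}; that of \eqref{lemproduct2} is identical after replacing one $\ell^2(\Z^2)$ summation by an $\ell^1(\Z^2)$ one, which is precisely what permits the endpoint values $s_3=1$ and $\tau_3=\f12$ there.

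First I would apply Bony's decomposition \eqref{bony} in the horizontal variable, $ab=T^\h_a b+T^\h_b a+R^\h(a,b)$, and then split each of the three resulting products once more by the analogous vertical Bony decomposition $fg=T^\v_f g+T^\v_g f+R^\v(f,g)$. This yields nine elementary pieces, each of which I would estimate so that $\|\dhk\dvl(\,\cdot\,)\|_{L^2}\lesssim c_{k,\ell}\,2^{-k(s_1+s_2-1)}2^{-\ell(\tau_1+\tau_2-\f12)}\|a\|_{H^{s_1,\tau_1}}\|b\|_{H^{s_2,\tau_2}}$ with $(c_{k,\ell})_{(k,\ell)\in\Z^2}\in\ell^2(\Z^2)$, which by Definition \ref{anibesov} and the identification $B^{s,s'}_{2,2}=H^{s,s'}$ is exactly what is needed. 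For a generic ``low--low in $a$'' piece, say a summand of $T^\v_{S^\h_{k'-1}a}(\dhkp b)$ localized by $\dhk\dvl$, the supports force $|k'-k|\le4$ and produce vertical output $\sim\dvl$, so Lemma \ref{lemBern} gives
$$\|\dhk\dvl(\,\cdot\,)\|_{L^2}\lesssim \|S^\h_{k'-1}S^\v_{\ell-1}a\|_{L^\infty}\,\|\dhkp\dvl b\|_{L^2}\lesssim 2^{k(1-s_1)}2^{\ell(\f12-\tau_1)}\|a\|_{H^{s_1,\tau_1}}\,\|\dhkp\dvl b\|_{L^2},$$
where collapsing the geometric sums $\sum_{k''\le k'-1,\ \ell''\le\ell-1}2^{k''(1-s_1)}2^{\ell''(\f12-\tau_1)}$ uses exactly $s_1<1$ and $\tau_1<\f12$, and summing in $(k,\ell)$ by Cauchy--Schwarz produces the $\ell^2$ coefficient. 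The mixed paraproduct pieces (horizontal-low/vertical-high, horizontal-high/vertical-low, the $T^\h_b a$ terms, and so on) are treated the same way, placing the appropriate factor in $L^\infty_\v(L^2_\h)$ or $L^2_\v(L^\infty_\h)$ according to which variable is being frequency-summed, again invoking $s_i<1$, $\tau_i<\f12$ to sum the low-frequency geometric series.

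For the genuinely high--high pieces --- the horizontal remainder $R^\h(a,b)$ and, inside every term, the vertical remainder $R^\v$ --- I would instead exploit the gains $2^k$ and $2^{\ell/2}$ furnished by the last two inequalities of Lemma \ref{lemBern}, which convert $\dhk$ (resp. $\dvl$) applied to an object carrying horizontal (resp. vertical) frequency $\gtrsim 2^k$ (resp. $\gtrsim 2^\ell$) into a summable tail $\sum_{k'\ge k-3}$ (resp. $\sum_{\ell'\ge\ell-3}$); these tails converge precisely because $s_1+s_2>0$ (resp. $\tau_1+\tau_2>0$), and that is the only role of those lower bounds. Adding the nine (after splitting $R^\v$, at most a few more) contributions gives \eqref{lemproduct1}; for \eqref{lemproduct2}, since $b$ is measured in the $\ell^1$-based space $B^{s_3,\tau_3}_{2,1}$, one Cauchy--Schwarz step is replaced by a direct $\ell^1$ summation, which relaxes $s_3<1,\ \tau_3<\f12$ to $s_3\le1,\ \tau_3\le\f12$.

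The main obstacle is not conceptual but organizational: one must lay out all the elementary products, decide for each which factor is placed in which anisotropic mixed-Lebesgue norm so that a favorable power of $2^k$ and of $2^\ell$ is extracted, and verify that every geometric series and every high-frequency tail converges under the stated inequalities --- in particular that the excluded thresholds $s_i=1$ and $\tau_i=\f12$ for the two ``$H$''-factors are exactly those at which the corresponding paraproduct sums fail to converge.
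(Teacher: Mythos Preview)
The paper does not actually prove this lemma: it is stated with the attribution ``A special case of Lemma 4.5 in \cite{CZ5}'' and no argument is given. Your proposal already notes this option (``alternatively one may simply quote Lemma 4.5 of \cite{CZ5}''), so in that sense you match the paper exactly.

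Beyond that, your self-contained sketch via a double (horizontal then vertical) Bony decomposition, treated blockwise with the anisotropic Bernstein inequalities of Lemma \ref{lemBern}, is the standard route and is precisely the method behind Lemma 4.5 of \cite{CZ5}; it also mirrors the paper's own proof of the purely horizontal product law, Lemma \ref{lemproductlawBh}. Your identification of where each hypothesis enters --- $s_i<1$, $\tau_i<\tfrac12$ to collapse the low-frequency geometric sums in the paraproducts, $s_1+s_2>0$, $\tau_1+\tau_2>0$ to sum the remainder tails, and the relaxation to $s_3\le1$, $\tau_3\le\tfrac12$ when $b\in B^{s_3,\tau_3}_{2,1}$ because an $\ell^2$ Cauchy--Schwarz step becomes a direct $\ell^1$ sum --- is correct. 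One cosmetic point: the lemma as printed names the vertical exponents $r_1,r_2,r_3$ in the hypotheses but $\tau_1,\tau_2,\tau_3$ in the conclusions; these are evidently meant to coincide, and you have silently read them that way.
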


We remark that \eqref{lemproduct2} can be seen as a borderline
case of \eqref{lemproduct1}.  However, this small generalization
can be crucial in this context. Indeed, considering the initial data given by
\eqref{slowinitialdata}, we observe that
$$\|v^\h_0(x_\h,\ve x_3)\|_B=\ve^{r-\f12}\|v^\h_0(x)\|_B,
\ \mbox{where}\  B=H^{s,r}\ \mbox{or}\ B^{s,r}_{2,1}.$$
So that to keep the uniform boundedness of $\|v^\h_0(x_\h,\ve x_3)\|_B,$ we must require $r\geq \frac12.$
Thanks to the product law in the homogeneous Besov spaces for the vertical variable, we can choose $r$ to be nothing but $\f12.$

\medskip

\setcounter{equation}{0}
\section{{\it A priori} estimates of  the systems \eqref{eqtbaru} and \eqref{eqtwtu}}\label{sec4}

The goal of this section is to present the proof of Propositions \ref{propbaru} and \ref{propwtu}.

\subsection{Two useful lemmas}

The key ingredients used in the proof of Propositions \ref{propbaru} and \ref{propwtu} are the subsequent two lemmas:

\begin{lem}\label{S4lem1}
{\sl Let $f(t)\eqdefa \|\na_\h\baruh(t)\|_{L^\infty_{\v}(L^2_\h)}^2$ and $s\in ]-1,2[,$ we have
\beq
\label{S4eq6}
\int_0^t\bigl|\bigl(\D_k^\h (\baruh\cdot\nablah b) | \D_k^\h b\bigr)_{L^2}\bigr|\,dt'\lesssim d_k^22^{-2ks}\|b\|_{\wt{L}^2_{t,f}(\cB^{s,0})}\|\na_\h b\|_{\wt{L}^2_t(\cB^{s,0})}.
\eeq}
\end{lem}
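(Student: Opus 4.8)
The plan is to estimate the commutator-type quantity by expanding $\baruh\cdot\nablah b$ with Bony's horizontal decomposition \eqref{bony} applied to each factor $\baruh^m$ and $\pa_m b$ ($m=1,2$), writing
\[
\baruh\cdot\nablah b = \sum_m\bigl(T^\h_{\baruh^m}\pa_m b + T^\h_{\pa_m b}\baruh^m + R^\h(\baruh^m,\pa_m b)\bigr).
\]
For the paraproduct term $T^\h_{\baruh^m}\pa_m b$ one uses the standard commutation trick: $(\D_k^\h T^\h_{\baruh^m}\pa_m b\mid \D_k^\h b)_{L^2}$ is rewritten as a commutator $([\D_k^\h,S^\h_{k'-1}\baruh^m]\pa_m\dhkp b\mid\D_k^\h b)$ plus terms that are handled after an integration by parts in $x_\h$ using $\diveh\baruh=0$, the point being to gain one horizontal derivative so that the two factors of $b$ each carry roughly half a derivative; the $L^\infty_\h$ bound on $S^\h_{k'-1}\baruh^m$ is replaced by the anisotropic bound $\|S^\h_{k'-1}\nabla_\h\baruh\|_{L^\infty_\v(L^2_\h)}\lesssim f(t)^{1/2}$, which is exactly where $f(t)=\|\nabla_\h\baruh(t)\|^2_{L^\infty_\v(L^2_\h)}$ enters and why one power of $f^{1/2}$ lands in the $\wt L^2_{t,f}$ norm and the other derivative of $b$ lands in $\wt L^2_t$.

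Next I would treat $T^\h_{\pa_m b}\baruh^m$ and $R^\h(\baruh^m,\pa_m b)$ directly (no commutator needed). In both, one distributes derivatives so that $\baruh$ always appears through $\nabla_\h\baruh$ measured in $L^\infty_\v(L^2_\h)$ — giving the factor $f(t)^{1/2}$ — while $b$ appears once at regularity level $s$ and once at level $s$ with one extra horizontal derivative; Hölder in $x_\h$ (using $L^2_\h\cdot L^\infty_\h\hookrightarrow L^2_\h$ via $L^\infty_\v(L^2_\h)\times L^\infty_\h$) and Lemma~\ref{lemBern} convert the dyadic pieces into $d_{k'}2^{-k's}$-type bounds. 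Summation over the frequency indices $k'$ with $|k-k'|\le 4$ (for the paraproducts) or $k'\ge k-3$ (for the remainder, where $s>-1$ ensures convergence of $\sum_{k'\ge k-3}2^{(k-k')(1+s)}$) produces a single $d_k^2 2^{-2ks}$ prefactor after the Cauchy–Schwarz sleight with the $\ell^1$ sequences. Finally, the time integral is split by Cauchy–Schwarz in $t'$: one factor becomes $\bigl(\int_0^t f(t')\|\dhkp b\|_{L^2}^2\,dt'\bigr)^{1/2}$, i.e. the $\wt L^2_{t,f}(\cB^{s,0})$ norm, and the other $\bigl(\int_0^t\|\nabla_\h\dhkp b\|_{L^2}^2\,dt'\bigr)^{1/2}$, i.e. the $\wt L^2_t(\cB^{s,0})$ norm.

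The condition $s\in\,]-1,2[$ is used in two places: $s>-1$ for convergence in the remainder term, and $s<2$ (together with $s$-dependent uses of $\|S^\h_k a\|_{L^\infty}\lesssim 2^{k(?)}$ type bounds, or rather the dual count in $T^\h_{\pa_m b}\baruh^m$ where $\pa_m b$ sits at level $s-1<1$) to keep the low-frequency paraproduct sums summable; I would check both endpoints carefully when distributing which factor receives the $\nabla_\h$.

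The main obstacle I anticipate is the bookkeeping in the $T^\h_{\baruh^m}\pa_m b$ commutator term: one must integrate by parts to exploit $\diveh\baruh=0$, control the commutator $[\D_k^\h,S^\h_{k'-1}\baruh^m]\pa_m\dhkp b$ by a mean-value/Taylor estimate (which costs $2^{-k}\nabla S^\h_{k'-1}\baruh^m$), and then balance the resulting derivative count so that exactly the symmetric splitting $s+1 = (s+\tfrac12)+(s+\tfrac12)$ of total regularity appears — all while keeping the anisotropic norm $L^\infty_\v(L^2_\h)$ on $\baruh$ rather than $L^\infty$, which is the non-standard feature forced by the weak hypotheses on $\baruh_0$. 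Everything else is routine anisotropic Littlewood–Paley bookkeeping of the kind already carried out in Lemma~\ref{lemproductlawBh}.
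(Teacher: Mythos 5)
Your proposal follows essentially the same route as the paper: horizontal Bony decomposition, a commutator argument for $T^\h_{\baruh}\na_\h b$ whose leading term vanishes by $\diveh\baruh=0$, direct estimates for $T^\h_{\na_\h b}\baruh$ and $R^\h(\baruh,\na_\h b)$ with $\baruh$ measured through $\na_\h\baruh\in L^\infty_\v(L^2_\h)$, and a Cauchy--Schwarz in time producing exactly the $\wt{L}^2_{t,f}(\cB^{s,0})$ and $\wt{L}^2_t(\cB^{s,0})$ factors, with $s>-1$ needed only in the remainder and $s<2$ only in $T^\h_{\na_\h b}\baruh$, just as you predicted. The only cosmetic point is that the final regularity count is the asymmetric one you actually wrote in the time-split ($b$ at level $s$ weighted by $f$ against $\na_\h b$ at level $s$), not the symmetric $(s+\tfrac12)+(s+\tfrac12)$ splitting you mention as a worry.
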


\begin{proof}
Applying Bony's decomposition on the horizontal variables, \eqref{bony}, to
$\baruh\cdot\na_\h b$  gives
\beq \label{S4eq7}
\baruh\cdot\na_\h b=T^\h_{\baruh}\na_\h b
+T^\h_{\na_\h b} \baruh+R^\h(\baruh,\na_\h b).
\eeq
Considering the support properties to the Fourier transform of the terms in $T^\h_{\baruh}\na_\h b,$
 we get, by applying a commutator's argument, that
\beno
\begin{split}
\bigl(\D_k^\h (T^\h_{\baruh}\na_\h b) \big| \D_k^\h b\bigr)_{L^2}=&\sum_{|k'-k|\leq 4}
\Bigl(\bigl([\D_k^\h; S_{k'-1}^\h\baruh]\D_{k'}^\h\na_\h b
\big|  \D_k^\h b\bigr)_{L^2}\\
&\qquad\qquad+\bigl(( S_{k'-1}^\h\baruh-S_{k-1}^\h\baruh)\D_k^\h\D_{k'}^\h\na_\h  b
\big|  \D_k^\h  b\bigr)_{L^2}\Bigr)\\
&+\bigl(S_{k-1}^\h\baruh\cdot\na_\h\D_k^\h  b
\big|  \D_k^\h b\bigr)_{L^2}\eqdefa \cA_1+\cA_2+\cA_3.
\end{split}
\eeno
Applying  standard commutator's estimate (see \cite{bcdbookk})
and Lemma \ref{lemBern} yields
\beno
\begin{split}
\int_0^t|\cA_1|\,dt'\lesssim &\sum_{|k'-k|\leq 4}2^{-k'}
\int_0^t\|S_{k'-1}^\h\na_\h\baruh\|_{L^\infty}\|\D_{k'}^\h\na_\h  b\|_{L^2}\|\D_k^\h b\|_{L^2}\,dt'\\
\lesssim &\sum_{|k'-k|\leq 4}\int_0^t
\|\na_\h\baruh\|_{L^\infty_\v(L^2_\h)}\|\D_{k'}^\h\na_\h  b\|_{L^2}\|\D_k^\h b\|_{L^2}\,dt'\\
\lesssim &\sum_{|k'-k|\leq 4} \|\D_{k'}^\h\na_\h  b\|_{L^2_t(L^2)}
\Bigl(\int_0^tf(t')\|\D_k^\h b\|_{L^2}^2\,dt'\Bigr)^{\f12}\\
\lesssim &d_k^2 2^{-2ks}\|\na_\h b\|_{\wt{L}^2_t(\cB^{s,0})}
\| b\|_{\wt{L}^2_{t,f}(\cB^{s,0})}.
\end{split}
\eeno
Along the same line, by applying Lemma \ref{lemBern} once again, we obtain
\beno
\begin{split}
\int_0^t|\cA_2|\,dt'\lesssim &\sum_{|k'-k|\leq 4}2^{-k'}\int_0^t\|\D_{k'}^\h\na_\h\baruh\|_{L^\infty}\|\D_{k'}^\h\na_\h  b\|_{L^2}\|\D_k^\h b\|_{L^2}\,dt'\\
\lesssim &\sum_{|k'-k|\leq 4}\int_0^t\|\na_\h\baruh\|_{L^\infty_\v(L^2_\h)}\|\D_{k'}^\h\na_\h  b\|_{L^2}\|\D_k^\h b\|_{L^2}\,dt'\\
\lesssim &d_k^2 2^{-2ks}\|\na_\h b\|_{\wt{L}^2_t(\cB^{s,0})}
\| b\|_{\wt{L}^2_{t,f}(\cB^{s,0})}.
\end{split}
\eeno
Whereas due to $\dive_\h \baruh=0,$ we have $\cA_3=0.$ This in turn shows that
\beq\label{4.5}
\int_0^t\bigl|\bigl(\D_k^\h(T^\h_{\baruh}\na_\h  b)
\big| \D_k^\h b\bigr)_{L^2}\bigr|\,dt'\lesssim d_k^2 2^{-2ks}\|\na_\h b\|_{\wt{L}^2_t(\cB^{s,0})}
\| b\|_{\wt{L}^2_{t,f}(\cB^{s,0})}.
\eeq

On the other hand, we get, by applying  Lemma \ref{lemBern}, that
\beno
\begin{split}
\int_0^t\bigl|\bigl(\D_k^\h(R^\h({\baruh},\na_\h  b)
\big| \D_k^\h b\bigr)_{L^2}\bigr|\,dt'
&\lesssim
2^k\sum_{k'\geq k-3}\int_0^t\|\wt{\D}_{k'}^\h\baruh\|_{L^\infty_\v(L^2_\h)}
\|{\D}^\h_{k'}\na_\h b\|_{L^2}\|\D_k^\h b\|_{L^2}\,dt'\\
&\lesssim2^k\sum_{k'\geq k-3}2^{-k'}\int_0^t
\|\na_\h \baruh\|_{L^\infty_\v(L^2_\h)}
\|{\D}^\h_{k'}\na_\h b\|_{L^2}\|\D_k^\h b\|_{L^2}\,dt'\\
&\lesssim2^k\sum_{k'\geq k-3}2^{-k'}\|\D_{k'}^\h\na_\h  b\|_{L^2_t(L^2)}\Bigl(\int_0^tf(t')
\|\D_k^\h b\|_{L^2}^2\,dt'\Bigr)^{\f12}\\
&\lesssim d_k 2^{-2ks}\sum_{k'\geq k-3} d_{k'} 2^{-(k'-k)(1+s)}
\|\na_\h b\|_{\wt{L}^2_t(\cB^{s,0})}
\| b\|_{\wt{L}^2_{t,f}(\cB^{s,0})}.
\end{split}
\eeno
Then using the fact that $s>-1,$ we achieve
\beq \label{4.6}
\begin{split}
\int_0^t\bigl|\bigl(\D_k^\h(R^\h({\baruh},\na_\h  b)
 \big| \D_k^\h b\bigr)_{L^2}\bigr|\,dt'
\lesssim d_k^22^{-2ks}\|\na_\h b\|_{\wt{L}^2_t(\cB^{s,0})}
\| b\|_{\wt{L}^2_{t,f}(\cB^{s,0})}.
\end{split}
\eeq

It remains to handle the estimate of $\int_0^t\bigl|\bigl(\D_k^\h(T^\h_{\na_\h b} \baruh) \big|
\D_k^\h b\bigr)_{L^2}\bigr|\,dt'.$ Indeed applying  Lemma \ref{lemBern} yields
\beno
\begin{split}
\int_0^t&\bigl|\bigl(\D_k^\h(T^\h_{\na_\h b} \baruh) \big|
\D_k^\h b\bigr)_{L^2}\bigr|\,dt'\\
\lesssim &2^{-k}\sum_{|k'-k|\leq 4}2^{-k'}\int_0^t\|S_{k'-1}^\h\na_\h b\|_{L^2_\v(L^\infty_\h)}\|\D_{k'}^\h \na_\h\baruh
\|_{L^\infty_\v(L^2_\h)}\|\D_{k}^\h \na_\h b\|_{L^2}\,dt'\\
\lesssim &2^{-2k} \sum_{|k'-k|\leq 4}
\Bigl(\int_0^t\|\na_\h\baruh\|_{L^\infty_\v(L^2_\h)}^2
\|S_{k'-1}^\h\na_\h b\|_{L^2_\v(L^\infty_\h)}^2\,dt'\Bigr)^{\f12}
\|\D_{k}^\h \na_\h b\|_{L^2_t(L^2)}.
\end{split}
\eeno
Yet due to $s<2,$ we have
\beq\label{S4eq3}
\begin{split}
\Bigl(\int_0^tf(t')
\|S_{k'-1}^\h\na_\h b\|_{L^2_\v(L^\infty_\h)}^2\,dt'\Bigr)^{\f12}\lesssim &\sum_{k''\leq k'-2}2^{2k''}\Bigl(\int_0^tf(t')
\|\D_{k''}^\h b\|_{L^2}^2\,dt'\Bigr)^{\f12}\\
\lesssim &\sum_{k''\leq k'-2}d_{k''}2^{k''(2-s)}\|b\|_{\wt{L}^2_{t,f}(\cB^{s,0})}\\
\lesssim &d_{k'}2^{k'(2-s)}\|b\|_{\wt{L}^2_{t,f}(\cB^{s,0})}.
\end{split}
\eeq
As a result, it comes out
\begin{equation}\label{4.7}
\int_0^t\bigl|\bigl(\D_k^\h(T^\h_{\na_\h b} \baruh) \big|
 \D_k^\h b\bigr)_{L^2}\bigr|\,dt'
 \lesssim d_k^2 2^{-2ks}\|\na_\h b\|_{\wt{L}^2_t(\cB^{s,0})}
\| b\|_{\wt{L}^2_{t,f}(\cB^{s,0})}.
\end{equation}
This together with \eqref{4.5} and \eqref{4.6} ensures  \eqref{S4eq6}  for any $s\in ]-1,2[.$
\end{proof}

\begin{rmk}\label{S4rmk1}
We notice that the condition $s<2$ is only used in the proof of \eqref{4.7}.
In the case when $b=\baruh,$ we can handle the
estimate \eqref{4.7} alternatively as follows
\beno
\begin{split}
\int_0^t\bigl|&\bigl(\D_k^\h(T^\h_{\na_\h \baruh} \baruh) \big|
\D_k^\h \baruh\bigr)_{L^2}\bigr|\,dt'\\
&\lesssim 2^{-k}\sum_{|k'-k|\leq 4}2^{k'}\int_0^t\|S_{k'-1}^\h\na_\h \baruh\|_{L^\infty_\v(L^2_\h)}
\|\D_{k'}^\h \baruh\|_{L^2}\|\D_{k}^\h \na_\h \baruh\|_{L^2}\,dt'\\
&\lesssim\sum_{|k'-k|\leq 4}\Bigl(\int_0^t\|\na_\h\baruh
\|_{L^\infty_\v(L^2_\h)}^2\|\D_{k'}\baruh\|_{L^2}^2\,dt'\Bigr)^{\f12}\|\D_{k}^\h \na_\h \baruh\|_{L^2_t(L^2)}\\
&\lesssim d_k^2 2^{-2ks}\|\baruh\|_{\wt{L}^2_{t,f}(\cB^{s,0})}
\|\na_\h \baruh\|_{\wt{L}^2_t(\cB^{s,0})},
\end{split}
\eeno
which does not  require $s<2.$ Therefore, \eqref{S4eq6} holds for any $s>-1$ in the case when
 $b=\baruh$~(actually
when $b=\baruh\cdot g(t)$ for any function $g(t)$).
\end{rmk}

\begin{lem}\label{S4lem2}
{\sl Let  $f(t)\eqdefa \|\na_\h\baruh(t)\|_{L^\infty_{\v}(L^2_\h)}^2$ and $s\in ]-1,1[,$ one has
\beq \label{S4eq8}
\int_0^t\bigl|\bigl(\D_k^\h( b\cdot\na_\h\baruh)
\big| \D_k^\h b\bigr)_{L^2}\bigr|\,dt'
\lesssim d_k^2 2^{-2ks}\|\na_\h b\|_{\wt{L}^2_t(\cB^{s,0})}
\|b\|_{\wt{L}^2_{t,f}(\cB^{s,0})}.
\eeq}
\end{lem}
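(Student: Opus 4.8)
The plan is to mimic the proof of Lemma \ref{S4lem1}, since \eqref{S4eq8} differs from \eqref{S4eq6} only in that the transport field in the convection term is now the background flow $\baruh$ rather than $b$, so we must estimate $\int_0^t|(\D_k^\h(b\cdot\na_\h\baruh)\,|\,\D_k^\h b)_{L^2}|\,dt'$. First I would apply Bony's horizontal decomposition \eqref{bony} to $b\cdot\na_\h\baruh$, writing it as $T^\h_{b^j}\pa_j\baruh+T^\h_{\pa_j\baruh}b^j+R^\h(b^j,\pa_j\baruh)$ (summation over $j=1,2$). The three pieces are then handled separately exactly as in Lemma \ref{S4lem1}, with the roles of "velocity field" and "thing being differentiated" swapped appropriately; the key point is that $\na_\h\baruh$ always appears measured in $L^\infty_\v(L^2_\h)$, so the factor $f(t')^{1/2}=\|\na_\h\baruh(t')\|_{L^\infty_\v(L^2_\h)}$ gets absorbed into the $\|b\|_{\wt L^2_{t,f}(\cB^{s,0})}$ norm, while $b$ (or $\na_\h b$) is measured in $L^2$ and produces either $\|b\|_{\wt L^2_{t,f}(\cB^{s,0})}$ (from the $f$-weighted time integral) or $\|\na_\h b\|_{\wt L^2_t(\cB^{s,0})}$.

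For the paraproduct $T^\h_{b}\na_\h\baruh=\sum_{j}\sum_{k'}S^\h_{k'-1}b^j\,\D^\h_{k'}\pa_j\baruh$, I would bound $\|S^\h_{k'-1}b\|_{L^2_\v(L^\infty_\h)}\lesssim\sum_{k''\le k'-2}2^{k''}\|\D^\h_{k''}b\|_{L^2_\v(L^2_\h)}$ via Lemma \ref{lemBern}; combined with the $f$-weighting this gives a bound $\lesssim d_{k'}2^{k'(1-s)}\|b\|_{\wt L^2_{t,f}(\cB^{s,0})}$ after using $s<1$, and then Cauchy-Schwarz in time against $\|\D^\h_k\na_\h b\|_{L^2_t(L^2)}$ closes the estimate (here one uses $|k'-k|\le4$ from the support condition and absorbs $\na_\h\baruh$ into the weight). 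For $T^\h_{\na_\h\baruh}b=\sum_j\sum_{k'}S^\h_{k'-1}\pa_j\baruh\,\D^\h_{k'}b^j$, one estimates $\|S^\h_{k'-1}\na_\h\baruh\|_{L^\infty_\v(L^2_\h)}\lesssim\|\na_\h\baruh\|_{L^\infty_\v(L^2_\h)}=f(t')^{1/2}$ (no derivative loss since $S^\h_{k'-1}$ is an $L^\infty_\h$-bounded multiplier applied in the horizontal variable — actually one uses $\|S^\h_{k'-1}g\|_{L^\infty_\v(L^2_\h)}\lesssim\|g\|_{L^\infty_\v(L^2_\h)}$), pairs $\D^\h_{k'}b$ in $L^2$, puts $\D^\h_k b$ in the weighted $L^2_{t,f}$ norm, and uses $|k'-k|\le4$ together with Cauchy-Schwarz. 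For the remainder $R^\h(b,\na_\h\baruh)=\sum_j\sum_{k'\ge k-3}\D^\h_{k'}b^j\,\wt\D^\h_{k'}\pa_j\baruh$, I would use Lemma \ref{lemBern} to write $\|\D^\h_k R^\h\|_{L^2}\lesssim 2^k\sum_{k'\ge k-3}\|\D^\h_{k'}b\|_{L^2}\|\wt\D^\h_{k'}\na_\h\baruh\|_{L^\infty_\v(L^2_\h)}$, bound the last factor by $f(t')^{1/2}$, and then sum the geometric-type series $\sum_{k'\ge k-3}2^{-(k'-k)(1+s)}$ which converges precisely because $s>-1$, yielding the claimed $d_k^2 2^{-2ks}$ decay.

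The main obstacle — modest compared to Lemma \ref{S4lem1} — is ensuring the derivative bookkeeping balances: each term must end up with exactly one copy of $\na_\h b$ (landing in $\|\na_\h b\|_{\wt L^2_t(\cB^{s,0})}$) and the rest of the $b$-regularity in $\|b\|_{\wt L^2_{t,f}(\cB^{s,0})}$, with $\na_\h\baruh$ always consumed by the time weight $f$. In the $T^\h_{\na_\h\baruh}b$ term there is no spare horizontal derivative on $b$, so unlike in Lemma \ref{S4lem1} one does not need a commutator argument here (the transport structure $\dive_\h\baruh=0$ that killed $\cA_3$ there is not available in the same form, but it is also not needed since $\na_\h\baruh$ carries its own $L^\infty_\v(L^2_\h)$ norm and no integration by parts is required); instead one simply has $\|b\|_{\wt L^2_{t,f}(\cB^{s,0})}$ appear directly. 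Summing the three contributions over the relevant index ranges and invoking the constraints $-1<s<1$ gives \eqref{S4eq8}.
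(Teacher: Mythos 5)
Your handling of the first paraproduct $T^\h_{b}\na_\h\baruh$ and of the remainder $R^\h(b,\na_\h\baruh)$ follows the paper's argument and is fine (including the use of $s<1$ for the $S^\h_{k'-1}b$ sum and $s>-1$ for the remainder). The genuine gap is in the middle term $T^\h_{\na_\h\baruh}b$. As you set it up, the three factors $S^\h_{k'-1}\na_\h\baruh$, $\D^\h_{k'}b$, $\D^\h_k b$ are measured in $L^\infty_\v(L^2_\h)$, $L^2$, $L^2$ respectively, and this H\"older pairing does not close: in the horizontal variables you would need $L^2_\h\cdot L^2_\h\cdot L^2_\h\to L^1_\h$, which is false. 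Moreover, even if you could integrate, your bookkeeping produces no horizontal derivative on either copy of $b$ from this term ("one simply has $\|b\|_{\wt L^2_{t,f}(\cB^{s,0})}$ appear directly"), so after Cauchy--Schwarz in time the un-weighted factor would be an $\wt L^2_t(\cB^{s,0})$ norm of $b$ itself, not of $\na_\h b$; that quantity is not on the right-hand side of \eqref{S4eq8} and cannot be absorbed in the application, so the claimed bound would not follow.

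The fix is exactly the opposite of your "no derivative loss" remark: you must spend one horizontal derivative here. Bound the low-frequency factor in $L^\infty$ of \emph{both} variables by Bernstein,
\begin{equation*}
\|S^\h_{k'-1}\na_\h\baruh\|_{L^\infty}\lesssim 2^{k'}\|\na_\h\baruh\|_{L^\infty_\v(L^2_\h)}=2^{k'}f(t')^{\f12},
\end{equation*}
so that the factor $2^{k'}\|\D^\h_{k'}b\|_{L^2}\sim\|\D^\h_{k'}\na_\h b\|_{L^2}$ upgrades one copy of $b$ to the $\na_\h b$ level, while $\D^\h_k b$ stays in $L^2$ and carries the weight. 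Then
\begin{equation*}
\int_0^t f^{\f12}\|\D^\h_{k'}\na_\h b\|_{L^2}\|\D^\h_k b\|_{L^2}\,dt'
\leq \|\D^\h_{k'}\na_\h b\|_{L^2_t(L^2)}\Bigl(\int_0^t f(t')\|\D^\h_k b\|_{L^2}^2\,dt'\Bigr)^{\f12}
\lesssim d_k^2\,2^{-2ks}\|\na_\h b\|_{\wt L^2_t(\cB^{s,0})}\|b\|_{\wt L^2_{t,f}(\cB^{s,0})},
\end{equation*}
using $|k'-k|\leq4$, which is the estimate the paper uses for this term; with this correction your proof coincides with the paper's.
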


\begin{proof}
By  applying
Bony's decomposition  for the horizontal variables, \eqref{bony},  to
$b\cdot\na_\h\baruh,$ we write
$$ b\cdot\na_\h\baruh=T^\h_{b}\nablah \baruh
+T^\h_{\nablah\baruh}b
+R^\h(b,\nablah\baruh).$$
It follows from Lemma \ref{lemBern} that
\begin{equation*}\begin{split}
& \int_0^t\bigl|\bigl(\D_k^\h (T^\h_{b}\na_\h\baruh)
\big| \D_k^\h b\bigr)_{L^2}\bigr|\,dt'\\
&\lesssim 2^{-k} \sum_{|k'-k|\leq 4}
\int_0^t\|S_{k'-1}^\h b\|_{L^2_\v(L^\infty_\h)}
\|\D_{k'}^\h\na_\h\baruh\|_{L^\infty_\v(L^2_\h)}
\|\D_k^\h\na_\h b\|_{L^2}\,dt'\\
&\lesssim 2^{-k} \sum_{|k'-k|\leq 4} \Bigl(\int_0^t\|\na_\h\baruh\|_{L^\infty_\v(L^2_\h)}^2
\|S_{k'-1}^\h b\|_{L^2_\v(L^\infty_\h)}^2\,dt'\Bigr)^{\f12}
\|\D_k^\h\na_\h b\|_{L^2_t(L^2)},
\end{split}\end{equation*}
Yet it follows from the derivation of \eqref{S4eq3} that
\beno
\Bigl(\int_0^t\|\na_\h\baruh\|_{L^\infty_\v(L^2_\h)}^2
\|S_{k'-1}^\h b\|_{L^2_\v(L^\infty_\h)}^2\,dt'\Bigr)^{\f12}
\lesssim d_{k'}  2^{k'(1-s)}\|b\|_{\wt{L}^2_{t,f}(\cB^{s,0})}\qquad\forall\ s<1,
\eeno
which implies for any $s<1$ that
\begin{equation}\label{4.17}
\int_0^t\bigl|\bigl(\D_k^\h (T^\h_{b}\na_\h\baruh)
\big| \D_k^\h b\bigr)_{L^2}\bigr|\,dt'
\lesssim d_k^2 2^{-2ks}\|b\|_{\wt{L}^2_{t,f}(\cB^{s,0})}\|\na_\h b\|_{\wt{L}^2_t(\cB^{s,0})}.
\end{equation}

Applying Lemma \ref{lemBern} twice yields
\begin{equation}\label{4.18}\begin{split}
\int_0^t&\bigl|\bigl(\D_k^\h (T^\h_{\nablah\baruh}b)
| \D_k^\h b\bigr)_{L^2}\bigr|\,d t'\\
&\lesssim\sum_{|k'-k|\leq 4}\int_0^t\|S_{k'-1}^\h\na_\h \baruh\|_{L^\infty}
\|\D_{k'}^\h b\|_{L^2}\|\D_k^\h b\|_{L^2}\,dt'\\
&\lesssim\sum_{|k'-k|\leq 4}\|\D_{k'}^\h\na_\h b\|_{L^2_t(L^2)}\Bigl(\int_0^t\|\na_\h\baruh\|_{L^\infty_\v(L^2_\h)}^2
\|\D_k^\h b\|_{L^2}\,dt'\Bigr)^{\f12}\\
&\lesssim d_k^2 2^{-2ks}\|\na_\h b\|_{\wt{L}^2_t(\cB^{s,0})}
\| b\|_{\wt{L}^2_{t,f}(\cB^{s,0})}.
\end{split}\end{equation}
And the remainder term can be handled as follows
\begin{equation*}\begin{split}
\int_0^t\bigl|\bigl(\D_k^\h (R^\h(b,\nablah\baruh))
 | \D_k^\h b\bigr)_{L^2}&\bigr|\,dt'
\lesssim 2^k\sum_{k'\geq k-3}\int_0^t
\|{\D}^\h_{k'} b\|_{L^2}\|\wt{\D}_{k'}^\h\na_\h\baruh\|_{L^\infty_\v(L^2_\h)}
\|\D_k^\h b\|_{L^2}\,dt'\\
&\lesssim 2^k\sum_{k'\geq k-3}2^{-k'}\int_0^t
\|\na_\h \baruh\|_{L^\infty_\v(L^2_\h)}
\|{\D}^\h_{k'}\na_\h b\|_{L^2}
\|\D_k^\h b\|_{L^2}\,dt'\\
&\lesssim2^k\sum_{k'\geq k-3}2^{-k'}
\|\D_{k'}^\h\na_\h b\|_{L^2_t(L^2)}
\Bigl(\int_0^t f(t')\|\D_k^\h b\|_{L^2}^2\,dt'\Bigr)^{\f12}\\
&\lesssim d_k 2^{-2ks}\sum_{k'\geq k-3} d_{k'} 2^{-(k'-k)(1+s)}
\|\na_\h b\|_{\wt{L}^2_t(\cB^{s,0})}
\|b\|_{\wt{L}^2_{t,f}(\cB^{s,0})},
\end{split}\end{equation*}
which together with the fact that $s>-1$ ensures that
\begin{equation*}\label{4.19}
\int_0^t\bigl|\bigl(\D_k^\h (R^\h(\nablah\baruh, b))
 | \D_k^\h b\bigr)_{L^2}\bigr|\,dt'
\lesssim d_k^2 2^{-2ks}\|\na_\h b\|_{\wt L^2_t(\cB^{s,0})}
\|b\|_{\wt{L}^2_{t,f}(\cB^{s,0})}.
\end{equation*}
Along with  \eqref{4.17} and \eqref{4.18}, we
complete the proof of \eqref{S4eq8}.
\end{proof}

\subsection{The proof of Proposition \ref{propbaru}}

In the rest
of this section, for any $\la>0$ and function $a$, we always denote
\beq \label{4.2}
a_\la(t)\eqdefa a(t)\exp\Bigl(-\la\int_0^t f(t')\,dt'\Bigr)
\quad\mbox{with}\quad f(t)\eqdefa \|\na_\h\baruh(t)\|_{L^\infty_{\v}(L^2_\h)}^2.
\eeq

\begin{prop}\label{S4prop1}
{\sl Let $\baruh$ be a smooth enough solution of \eqref{eqtbaru} on $[0,T].$
Then for any $t\in [0,T]$ and $s\in ]-1,\infty[,$ we have
\beq \label{S4eq2}\|\baruh\|_{\wt{L}^\infty_t(\cB^{s,0})}+\|\na \baruh\|_{\wt{L}^2_t(\cB^{s,0})}
\leq 2\|\uh_0\|_{\cB^{s,0}}
\exp\bigl(C A_\d(u_0^\h)\bigr),
\eeq  where $A_\d (\uh_0)$ is determined by \eqref{S1eq3}.}
\end{prop}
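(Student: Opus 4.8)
The plan is to perform a horizontal Littlewood-Paley decomposition, run an energy estimate at the level of each block $\Delta_k^\h\baruh$, and close the argument via Gronwall's inequality applied to the time-weighted quantities introduced in \eqref{4.2}. First I would apply $\Delta_k^\h$ to \eqref{eqtbaru}, take the $L^2$ inner product with $\Delta_k^\h\baruh$, and use the divergence-free condition together with the standard energy identity to get
\beno
\f12\f{d}{dt}\|\D_k^\h\baruh\|_{L^2}^2+\|\na\D_k^\h\baruh\|_{L^2}^2
=-\bigl(\D_k^\h(\baruh\cdot\na_\h\baruh)\,\big|\,\D_k^\h\baruh\bigr)_{L^2},
\eeno
where the pressure term drops after projecting onto divergence-free fields (or is handled by the usual argument that $\dive_\h\baruh=0$ makes $\na_\h\bar p$ contribute harmlessly at this level). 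Here I would want to exploit the $L^\infty_\v(L^2_\h)$ structure, so the natural move is to integrate in time and feed the right-hand side into Lemma \ref{S4lem1}, which — together with Remark \ref{S4rmk1}, since $b=\baruh$ falls exactly into the case where the restriction $s<2$ is removed — yields
\beno
\int_0^t\bigl|\bigl(\D_k^\h(\baruh\cdot\na_\h\baruh)\,\big|\,\D_k^\h\baruh\bigr)_{L^2}\bigr|\,dt'
\lesssim d_k^2 2^{-2ks}\|\baruh\|_{\wt L^2_{t,f}(\cB^{s,0})}\|\na_\h\baruh\|_{\wt L^2_t(\cB^{s,0})}
\eeno
for any $s>-1$.

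The key structural point is to reorganize this into a weighted estimate so that Gronwall applies. Multiplying by $\exp\bigl(-\la\int_0^t f\bigr)$ and working with $\baruh_\la$ as in \eqref{4.2}, the left-hand side of the energy identity produces an extra good term $\la\int_0^t f(t')\|\D_k^\h\baruh_\la\|_{L^2}^2\,dt'$; this is precisely $\la\|\baruh_\la\|_{\wt L^2_{t,f(\cdot)}(\cB^{s,0})}^2$ after summation, which — upon choosing $\la$ large enough (of the form $C$, absorbing the implicit constant from Lemma \ref{S4lem1}) — dominates the contribution of the nonlinear term, leaving
\beno
2^{ks}\|\D_k^\h\baruh_\la\|_{L^\infty_t(L^2)}+2^{ks}\|\na\D_k^\h\baruh_\la\|_{L^2_t(L^2)}
\lesssim d_k\|\D_k^\h\uh_0\|_{L^2}2^{ks}+\text{(small remainder)}.
\eeno
Summing over $k\in\Z$ against the $\ell^1$ weights $d_k$ gives
\beno
\|\baruh_\la\|_{\wt L^\infty_t(\cB^{s,0})}+\|\na\baruh_\la\|_{\wt L^2_t(\cB^{s,0})}
\leq 2\|\uh_0\|_{\cB^{s,0}},
\eeno
and finally undoing the exponential weight, using that by Theorem \ref{S4thm1} (estimate \eqref{S4eq1}) one has $\int_0^\infty f(t')\,dt'\leq A_\d(\uh_0)$, restores the factor $\exp\bigl(CA_\d(\uh_0)\bigr)$ and yields \eqref{S4eq2}.

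The main obstacle I anticipate is the bookkeeping in the Gronwall step inside the Chemin-Lerner/$\ell^1$-summed norm: one cannot naively apply Gronwall to $\|\cdot\|_{\wt L^\infty_t(\cB^{s,0})}$ because the $\ell^1$ sum sits outside the time norm, and this is exactly why the time-weighted norm $\wt L^2_{t,f}(\cB^{s,0})$ of Definition \ref{defpz} was introduced — one must take the supremum/square root at the level of each fixed block $k$ before summing, and verify that the resulting inequality genuinely closes (i.e. that the coefficient of $\|\baruh_\la\|_{\wt L^2_{t,f}(\cB^{s,0})}^2$ from the nonlinear estimate is strictly smaller than $\la$). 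A secondary technical point is justifying the energy identity and all manipulations at the level of a smooth enough solution on $[0,T]$, which is granted by hypothesis, and checking that the endpoint $s=-1$ is genuinely excluded only through the remainder term $R^\h$ (the factor $2^{-(k'-k)(1+s)}$ being summable requires $s>-1$), consistent with Lemma \ref{S4lem1} and Remark \ref{S4rmk1}.
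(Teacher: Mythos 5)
Your proposal is correct and follows essentially the same route as the paper's proof: the exponential weight $\baruh_\la$ of \eqref{4.2}, the block energy identity \eqref{4.4} with the pressure vanishing by $\diveh\baruh=0$, the nonlinear bound from Lemma \ref{S4lem1} together with Remark \ref{S4rmk1} to reach all $s>-1$, absorption of the nonlinear contribution by the good term $\la\|\baruh_\la\|_{\wt{L}^2_{t,f}(\cB^{s,0})}$ for $\la$ large after taking square roots blockwise and summing, and finally \eqref{S4eq1} to convert the weight into the factor $\exp\bigl(CA_\d(\uh_0)\bigr)$. The only cosmetic difference is that you quote the nonlinear estimate before introducing the weight, whereas the paper weights the equation first so that Remark \ref{S4rmk1} applies directly to $\baruh_\la=\baruh\,g(t)$; this is a reordering, not a gap.
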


\begin{proof}
By multiplying $\exp\Bigl(-\la\int_0^t f(t')\,dt'\Bigr)$ to
\eqref{eqtbaru}, we write
\begin{equation}\label{eqtbarulam}
\pa_t \baruh_\la+\la f(t)\baruh_\la +\baruh\cdot\nablah\baruh_\la
-\Delta \baruh_\la=-\nablah \bar p_\la.
\end{equation}
Applying $\D_k^\h$ to the above equation and taking $L^2$ inner product of the resulting equation with
$\D_k^\h\baruh_\lam,$ we obtain
\begin{equation}\label{4.4}
\f12\f{d}{dt}\|\D_k^\h\baruh_\la(t)\|_{L^2}^2+\la f(t)\|\D_k^\h\baruh_\la(t)\|_{L^2}^2+\|\D_k^\h\na \baruh_\la\|_{L^2}^2=-\bigl(\D_k^\h(\baruh\cdot
\na_\h \baruh_\la) \big| \D_k^\h\baruh_\la\bigr)_{L^2}.
\end{equation}
By virtue of Remark \ref{S4rmk1}, for any $s>-1,$ one has
\beq \label{4.8}
\int_0^t\bigl|\bigl(\D_k^\h(\baruh\cdot\na_\h \baruh_\lam) \big|
 \D_k^\h\baruh_\la\bigr)_{L^2}\bigr|\,dt'
\lesssim d_k^2 2^{-2ks}\|\na_\h\baruh_\la\|_{\wt{L}^2_t(\cB^{s,0})}
\|\baruh_\la\|_{\wt{L}^2_{t,f}(\cB^{s,0})}.
\eeq
Then by integrating \eqref{4.4} over $[0,t]$ and inserting \eqref{4.8} into the resulting equality,
we get
\beno
\begin{split}
\|\D_k^\h\baruh_\la\|_{L^\infty_t(L^2)}^2&+\la \int_0^tf(t')\|\D_k^\h\baruh_\la(t')\|_{L^2}^2\,dt'+\|\D_k^\h\na \baruh\|_{L^2_t(L^2)}^2\\
&\lesssim  \|\D_k^\h\baruh_0\|_{L^\infty_t(L^2)}^2+d_k^2 2^{-2ks}\|\na_\h\baruh_\la\|_{\wt{L}^2_t(\cB^{s,0})}
\|\baruh_\la\|_{\wt{L}^2_{t,f}(\cB^{s,0})}.
\end{split}
\eeno
In view of Definition \ref{def2}, by multiplying the above inequality by $2^{2ks}$ and taking square root of the
resulting inequality, and then summing up the resulting inequality over $\Z,$
 we achieve
\beno
\begin{split}
\|\baruh_\la\|_{\wt{L}^\infty_t(\cB^{s,0})}
+&\sqrt{\la}\|\baruh_\la\|_{\wt{L}^2_{t,f}(\cB^{s,0})}
+\|\na \baruh_\la \|_{\wt{L}^2_t(\cB^{s,0})}\\
\leq&\|\baruh_0\|_{\cB^{s,0}}
+C\|\na_\h\baruh_\la\|_{\wt L^2_t(\cB^{s,0})}^{\f12}
\|\baruh_\la\|_{\wt{L}^2_{t,f}(\cB^{s,0})}^{\f12}\\
\leq &\|\uh_0\|_{\cB^{s,0}}
+\f12\|\na_\h\baruh_\la\|_{\wt L^2_t(\cB^{s,0})}
+C\|\baruh_\la\|_{\wt{L}^2_{t,f}(\cB^{s,0})}.
\end{split}
\eeno
By taking $\la={C^2}$ in the above inequality gives rise to
$$
\|\baruh_\la\|_{\wt{L}^\infty_t(\cB^{s,0})}+\|\na \baruh_\la\|_{\wt{L}^2_t(\cB^{s,0})}
\leq 2\|\uh_0\|_{\cB^{s,0}} \quad\mbox{for}\ \forall\ s>-1.
$$
Yet it follows from the definition \eqref{4.2} that
\beno
\begin{split}
\bigl(\|\baruh\|_{\wt{L}^\infty_t(\cB^{s,0})}
+\|\na \baruh\|_{\wt{L}^2_t(\cB^{s,0})}\bigr)
\exp\Bigl(-\lam\int_0^tf(t')\,dt'\Bigr)
\leq \|\baruh_\la\|_{\wt{L}^\infty_t(\cB^{s,0})}
+\|\na \baruh_\la\|_{\wt{L}^2_t(\cB^{s,0})},
\end{split}
\eeno
which implies that for any $s>-1,$ there holds
\beno
\|\baruh\|_{\wt{L}^\infty_t(\cB^{s,0})}+\|\na \baruh\|_{\wt{L}^2_t(\cB^{s,0})}
\leq 2\|\uh_0\|_{\cB^{s,0}}
\exp\bigl(C\|\nablah\baruh\|_{L^2_t(L^\infty_\v(L^2_\h))}^2\bigr).
\eeno
Along with \eqref{S4eq1}, we deduce \eqref{S4eq2}. This completes the proof of Proposition \ref{S4prop1}.
\end{proof}

\begin{prop}\label{S4prop2}
{\sl Under the assumptions of Proposition \ref{S4prop1}, for any $t\in [0,T]$ and $s\in ]-1,1[,$
we have
\beq \label{S4eq2a}\|\pa_3\baruh\|_{\wt{L}^\infty_t(\cB^{s,0})}+\|\na\pa_3 \baruh\|_{\wt{L}^2_t(\cB^{s,0})}
\leq 2\|\pa_3\uh_0\|_{\cB^{s,0}}
\exp\bigl(C A_\d(u_0^\h)\bigr).
\eeq}
\end{prop}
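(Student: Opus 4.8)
The plan is to differentiate the equation \eqref{eqtbaru} with respect to $x_3$ and run an energy estimate in $\cB^{s,0}$ entirely parallel to the proof of Proposition \ref{S4prop1}, using the two commutator lemmas already established. Setting $w\eqdef\pa_3\baruh$, differentiating \eqref{eqtbaru} gives
\beno
\pa_t w+\baruh\cdot\nablah w-\Delta w=-\nablah\pa_3\bar p-w\cdot\nablah\baruh,
\eeno
and $w$ still satisfies $\diveh w=0$ together with $w|_{t=0}=\pa_3\uh_0$. As in \eqref{4.2} I would work with the exponentially damped unknown $w_\la(t)\eqdefa w(t)\exp\bigl(-\la\int_0^tf(t')\,dt'\bigr)$, where $f(t)=\|\na_\h\baruh(t)\|_{L^\infty_\v(L^2_\h)}^2$, so that the damped equation carries the extra good term $\la f(t)w_\la$ on the left. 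Applying $\D_k^\h$, taking the $L^2$ inner product with $\D_k^\h w_\la$, and integrating in time produces a term $\int_0^t|(\D_k^\h(\baruh\cdot\nablah w_\la)\mid\D_k^\h w_\la)_{L^2}|\,dt'$, which is controlled by Lemma \ref{S4lem1} (valid for $s\in\,]-1,2[$, in particular for $s\in\,]-1,1[$), and a term $\int_0^t|(\D_k^\h(w_\la\cdot\nablah\baruh)\mid\D_k^\h w_\la)_{L^2}|\,dt'$, which is controlled by Lemma \ref{S4lem2} (valid for $s\in\,]-1,1[$, which is exactly why the range shrinks here). Both are bounded by $d_k^22^{-2ks}\|\nablah w_\la\|_{\wt L^2_t(\cB^{s,0})}\|w_\la\|_{\wt L^2_{t,f}(\cB^{s,0})}$.

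Next I would sum over $k\in\Z$ after multiplying by $2^{2ks}$ and taking square roots, exactly as in Proposition \ref{S4prop1}, arriving at
\beno
\|w_\la\|_{\wt L^\infty_t(\cB^{s,0})}+\sqrt\la\,\|w_\la\|_{\wt L^2_{t,f}(\cB^{s,0})}+\|\na w_\la\|_{\wt L^2_t(\cB^{s,0})}\leq\|\pa_3\uh_0\|_{\cB^{s,0}}+C\|\nablah w_\la\|_{\wt L^2_t(\cB^{s,0})}^{\f12}\|w_\la\|_{\wt L^2_{t,f}(\cB^{s,0})}^{\f12}.
\eeno
Using Young's inequality to absorb $\tfrac12\|\nablah w_\la\|_{\wt L^2_t(\cB^{s,0})}$ into the left side and then choosing $\la=C^2$ to absorb the residual $C\|w_\la\|_{\wt L^2_{t,f}(\cB^{s,0})}$ gives
\beno
\|w_\la\|_{\wt L^\infty_t(\cB^{s,0})}+\|\na w_\la\|_{\wt L^2_t(\cB^{s,0})}\leq 2\|\pa_3\uh_0\|_{\cB^{s,0}}.
\eeno
Finally, undoing the exponential weight as in Proposition \ref{S4prop1} and invoking the bound $\int_0^\infty f(t')\,dt'\leq A_\d(\uh_0)$ from \eqref{S4eq1} of Theorem \ref{S4thm1} yields \eqref{S4eq2a}.

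The only genuinely new point compared with Proposition \ref{S4prop1} is the presence of the zeroth-order-in-$w$ forcing term $w\cdot\nablah\baruh$, which has no counterpart in the equation for $\baruh$ itself; the main (minor) obstacle is therefore simply to verify that Lemma \ref{S4lem2} applies to it with the same $d_k^22^{-2ks}$ structure and the same weighted norms, which is immediate since Lemma \ref{S4lem2} is stated precisely for expressions of the form $(\D_k^\h(b\cdot\nablah\baruh)\mid\D_k^\h b)_{L^2}$ with $b=w_\la$. One should also note that, unlike in Remark \ref{S4rmk1}, here $b=w_\la\neq\baruh\cdot g(t)$, so the restriction $s<1$ coming from Lemma \ref{S4lem2} (and from the $T^\h_b\nablah\baruh$ piece there) is essential and genuinely accounts for the narrower range $s\in\,]-1,1[$ in the statement; everything else is a line-by-line repetition of the argument for Proposition \ref{S4prop1}, so I would not grind through it again in detail.
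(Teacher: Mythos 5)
Your proposal is correct and follows essentially the same route as the paper: differentiate the (damped) equation in $x_3$, apply $\D_k^\h$ and test against $\D_k^\h\pa_3\baruh_\la$, control the convection term by Lemma \ref{S4lem1} and the stretching term $\pa_3\baruh_\la\cdot\nablah\baruh$ by Lemma \ref{S4lem2} (which is exactly what forces $s\in\,]-1,1[$), then sum, absorb, choose $\la$ large, and remove the weight using \eqref{S4eq1}. No substantive difference from the paper's argument.
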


\begin{proof}
We first get, by applying $\pa_3$ to the equation \eqref{eqtbarulam}, that
\begin{equation*}\label{eqtpazbarulam}
\pa_t \pa_3\baruh_\lam+\la f(t)\pa_3\baruh_\lam
+\baruh\cdot\nablah\pa_3\baruh_\lam+\pa_3\baruh_\lam\cdot\nablah\baruh
-\Delta \pa_3\baruh_\lam=-\nablah \pa_3\bar p_\lam.
\end{equation*}
Applying $\D_k^\h$ to the above equation and taking $L^2$ inner product
of the resulting equation with
$\D_k^\h\pa_3\baruh_\lam,$ we obtain
\begin{equation}\begin{split}\label{4.11}
\f12\f{d}{dt}\|\D_k^\h&\pa_3\baruh_\la(t)\|_{L^2}^2
+\la f(t)\|\D_k^\h\pa_3\baruh_\la(t)\|_{L^2}^2
+\|\D_k^\h\na \pa_3\baruh_\la\|_{L^2}^2\\
&=-\bigl(\D_k^\h(\baruh\cdot
\na_\h\pa_3 \baruh_\la) | \D_k^\h\pa_3\baruh_\la\bigr)_{L^2}
-\bigl(\D_k^\h(\pa_3\baruh_\lam\cdot
\na_\h\baruh) | \D_k^\h\pa_3\baruh_\lam\bigr)_{L^2}.
\end{split}\end{equation}
Applying Lemma \ref{S4lem1} yields that for any $s\in ]-1,2[$
\begin{equation*}\begin{split}\label{4.16}
\int_0^t\bigl|\bigl(\D_k^\h(\baruh\cdot
\na_\h\pa_3 \baruh_\la)
\big| \D_k^\h\pa_3\baruh_\lam\bigr)_{L^2}\bigr|\,dt'
\lesssim d_k^2 2^{-2ks}\|\na_\h\pa_3\baruh_\la\|_{\wt{L}^2_t(\cB^{s,0})}
\|\pa_3\baruh_\la\|_{\wt{L}^2_{t,f}(\cB^{s,0})}.
\end{split}\end{equation*}
Applying Lemma \ref{S4lem2} gives for any   $s\in ]-1,1[$ that
\begin{equation*}\begin{split}\label{4.20}
\int_0^t\bigl|\bigl(\D_k^\h(\pa_3\baruh_\lam\cdot\na_\h\baruh)
\big| \D_k^\h\pa_3\baruh_\lam\bigr)_{L^2}\bigr|\,dt'
\lesssim d_k^2 2^{-2ks}\|\na_\h\pa_3\baruh_\la\|_{\wt{L}^2_t(\cB^{s,0})}
\|\pa_3\baruh_\la\|_{\wt{L}^2_{t,f}(\cB^{s,0})}.
\end{split}\end{equation*}
Integrating \eqref{4.11} over $[0,t]$ and inserting the above
two estimates into the resulting inequality, we achieve for any $ s\in]-1,1[$ that
 \beno
\begin{split}
\|\D_k^\h\pa_3\baruh_\la\|_{L^\infty_t(L^2)}^2&+\la \int_0^tf(t')\|\D_k^\h\pa_3\baruh_\la(t')\|_{L^2}^2\,dt'+\|\D_k^\h\na \pa_3\baruh\|_{L^2_t(L^2)}^2\\
&\leq \|\D_k^\h\pa_3\baruh_0\|_{L^2}^2+C d_k^2 2^{-2ks}
\|\na_\h\pa_3\baruh_\la\|_{\wt{L}^2_t(\cB^{s,0})}
\|\pa_3\baruh_\la\|_{\wt{L}^2_{t,f}(\cB^{s,0})}.
\end{split}
\eeno
In view of Definition \ref{def2}, by multiplying $2^{2ks}$ to the above inequality and taking square root
of the resulting inequality, and then summing up the
resulting inequalities over $\Z,$ we find
\beno
\begin{split}
\|\pa_3\baruh_\la\|_{\wt{L}^\infty_t(\cB^{s,0})}
+&\sqrt{\la}\|\pa_3\baruh_\la\|_{\wt{L}^2_{t,f}(\cB^{s,0})}
+\|\na \pa_3\baruh_\la \|_{\wt{L}^2_t(\cB^{s,0})}\\
\leq&\|\pa_3\baruh_0\|_{\cB^{s,0}}
+C\|\na_\h\pa_3\baruh_\la\|_{\wt L^2_t(\cB^{s,0})}^{\f12}
\|\pa_3\baruh_\la\|_{\wt{L}^2_{t,f}(\cB^{s,0})}^{\f12}.
\end{split}
\eeno
Then  by repeating the last step of
the proof to Proposition \ref{S4prop1},  we deduce \eqref{S4eq2a}. This finishes the proof of the proposition.
\end{proof}

Now we are in a position to complete the proof of  Proposition \ref{propbaru}.

\begin{proof}[Proof of Proposition \ref{propbaru}] It follows from Theorem \ref{S4thm1} that \eqref{eqtbaru} has a unique solution $u\in\frak{E}$ given by \eqref{ping1}.
It remains to deal with the estimates \eqref{estimatebaru1} and  \eqref{estimatebaru2}.
Indeed for any $\sigma_1\in ]-1,1[$ and $\sigma_2\in ]0,1[$,
and for any integer $N$, we decompose  the vertical frequency of $\baruh$ into the low and high frequency parts so that
\begin{align*}
\|\baruh\|_{\wt{L}^\infty_t(B^{\sigma_1,\sigma_2}_{2,1})}
& =\sum_{k,\ell\in\Z^2}2^{k\sigma_1}2^{\ell\sigma_2}
\|\D_k^\h\D_\ell^\v \baruh\|_{L^\infty_t(L^2)}\\
 &\lesssim\sum_{\ell\leq N}2^{\ell\sigma_2}
\sum_{k\in \Z}2^{k\sigma_1}\|\D_k^\h\baruh\|_{L^\infty_t(L^2)}
+\sum_{\ell> N}2^{\ell(\sigma_2-1)}
\sum_{k\in \Z}2^{k\sigma_1}\|\D_k^\h\pa_3\baruh\|_{L^\infty_t(L^2)}\\
&\lesssim 2^{N\sigma_2}\|\baruh\|_{\wt{L}^\infty_t(\cB^{\sigma_1,0})}
+2^{N(\sigma_2-1)}\|\pa_3\baruh\|_{\wt{L}^\infty_t(\cB^{\sigma_1,0})},
\end{align*} where we used Definition \ref{def2} in the last step.
Taking the integer $N$ so that
$ 2^N \sim \frac{\|\pa_3\baruh\|_{\wt{L}^\infty_t(\cB^{\sigma_1,0})}}
{\|\baruh\|_{\wt{L}^\infty_t(\cB^{\sigma_1,0})}} $
gives rise to
\begin{equation*}\label{4.22}
\|\baruh\|_{\wt{L}^\infty_t(B^{\sigma_1,\sigma_2}_{2,1})}
\lesssim \|\baruh\|_{\wt{L}^\infty_t(\cB^{\sigma_1,0})}^{1-\sigma_2}
\|\pa_3\baruh\|_{\wt{L}^\infty_t(\cB^{\sigma_1,0})}^{\sigma_2}.
\end{equation*}
Similarly,  we have
\begin{equation*}
\|\nabla\baruh\|_{\wt{L}^2_t(B^{\sigma_1,\sigma_2}_{2,1})}
\lesssim  \|\baruh\|_{\wt{L}^2_t(\cB^{\sigma_1,0})}^{1-\sigma_2}
\|\pa_3\baruh\|_{\wt{L}^2_t(\cB^{\sigma_1,0})}^{\sigma_2}.
\end{equation*}
Inserting \eqref{S4eq2} and \eqref{S4eq2a} into the above inequalities leads to
\begin{equation}\label{4.23}
\|\baruh\|_{\wt{L}^\infty_t(B^{\sigma_1,\sigma_2}_{2,1})}
+\|\nabla\baruh\|_{\wt{L}^2_t(B^{\sigma_1,\sigma_2}_{2,1})}
\lesssim \|\baruh_0\|_{\cB^{\sigma_1,0}}^{1-\sigma_2}
\|\pa_3\baruh_0\|_{\cB^{\sigma_1,0}}^{\sigma_2}
\exp\bigl(A_\d(\uh_0)\bigr).
\end{equation}
Then \eqref{estimatebaru1} follows from
\eqref{4.23} and Minkowski's inequality.

On the other hand, along the same line to the proof of Lemmas \ref{S4lem1} and \ref{S4lem2}, we get
\begin{equation*}\begin{split}
&\int_0^t\bigl|\bigl(\D_k^\h(\baruh\cdot
\na_\h\pa_3 \baruh_\la)
\big| \D_k^\h\pa_3\baruh_\lam\bigr)_{L^2}\bigr|\,dt'
\lesssim c_k^2 2^{-2k\s_1}\|\na_\h\pa_3\baruh_\la\|_{\wt{L}^2_t(H^{\s_1,0})}
\|\pa_3\baruh_\la\|_{\wt{L}^2_{t,f}(H^{\s_1,0})},\\
&\int_0^t\bigl|\bigl(\D_k^\h(\pa_3\baruh_\lam\cdot\na_\h\baruh)
\big| \D_k^\h\pa_3\baruh_\lam\bigr)_{L^2}\bigr|\,dt'
\lesssim c_k^2 2^{-2k\s_1}\|\na_\h\pa_3\baruh_\la\|_{\wt{L}^2_t(H^{\s_1,0})}
\|\pa_3\baruh_\la\|_{\wt{L}^2_{t,f}(H^{\s_1,0})},
\end{split}\end{equation*} where $\left(c_k\right)_{k\in\Z}$ is a generic element of $\ell^2(\Z)$ so that $\sum_{k\in\Z}c_k^2=1.$ Then by
integrating \eqref{4.11} over $[0,t]$ and inserting the above inequalities into the resulting inequality, we find
 \beno
\begin{split}
\|\D_k^\h\pa_3&\baruh_\la\|_{L^\infty_t(L^2)}^2+\la \int_0^tf(t')\|\D_k^\h\pa_3\baruh_\la(t')\|_{L^2}^2\,dt'+\|\D_k^\h\na \pa_3\baruh\|_{L^2_t(L^2)}^2\\
&\leq  \|\D_k^\h\pa_3\baruh_0\|_{L^\infty_t(L^2)}^2+Cc_k^2 2^{-2k\s_1}\|\na_\h\pa_3\baruh_\la\|_{\wt{L}^2_t(H^{\s_1,0})}
\|\pa_3\baruh_\la\|_{\wt{L}^2_{t,f}(H^{\s_1,0})}.
\end{split}
\eeno
Multiplying $2^{2k\s_1}$ to the above inequality and then
summing up the resulting inequalities over $\Z$ gives rise to
\beno
\begin{split}
\|\pa_3\baruh_\la\|_{\wt{L}^\infty_t(H^{\s_1,0})}^2
+&\la\|\pa_3\baruh_\la\|_{\wt{L}^2_{t,f}(H^{\s_1,0})}^2
+\|\na \pa_3\baruh_\la \|_{{L}^2_t(H^{\s_1,0})}^2\\
\leq&\|\pa_3\baruh_0\|_{H^{\s_1,0}}^2
+C\|\na_\h\pa_3\baruh_\la\|_{ L^2_t(H^{\s_1,0})}
\|\pa_3\baruh_\la\|_{\wt{L}^2_{t,f}(H^{\s_1,0})},
\end{split}
\eeno
from which, we deduce \eqref{estimatebaru2} by repeating the proof to the last step of Proposition \ref{S4prop1}.
\end{proof}

\subsection{The proof of Proposition \ref{propwtu}}

\begin{prop}\label{S4prop3}
{\sl Let $\wt u$ be a smooth enough solution of \eqref{eqtwtu}. Then under the assumptions of Theorem \ref{S4thm1}, for any $s_1\in ]-1,2[$
and  $s_2\in ]-1,1[,$
we have
\ben
 &&\|\wt u\|_{\wt{L}^\infty_t(\cB^{s_1,0})}+\|\na \wt u\|_{\wt{L}^2_t(\cB^{s_1,0})}
\leq C\|u_0\|_{\cB^{s_1,0}}
\exp\bigl(C A_\d(u_0^\h)\bigr), \label{S4eq9}\\
&&\|\pa_3\wt u\|_{\wt{L}^\infty_t(\cB^{s_2,0})}+\|\na\pa_3 \wt u\|_{\wt{L}^2_t(\cB^{s_2,0})}
\leq C\|\pa_3u_0\|_{\cB^{s_2,0}}\exp\bigl(C A_\d(u_0^\h)\bigr), \label{S4eq10}
\een where $A_\d (\uh_0)$ is determined by \eqref{S1eq3}.}
\end{prop}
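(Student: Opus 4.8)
The plan is to run, for the linear system~\eqref{eqtwtu}, the same weighted-energy scheme already used for Propositions~\ref{S4prop1} and~\ref{S4prop2}, the only new features being that the pressure gradient $\nabla\wt p$ is genuinely three-dimensional (rather than $\nablah\bar p$) and that the initial data is $\wt u_0$ rather than $\uh_0$. For the initial data, first observe that $\dive u_0=0$ gives $\pa_3u_0^3=-\diveh u_0^\h$, whence $\wt u_0^\h=-\nablah\Deltah^{-1}\pa_3u_0^3=\nablah\Deltah^{-1}\diveh u_0^\h=(u_0^\h)_{\rm div}$ and $\pa_3\wt u_0^\h=\pa_3(u_0^\h)_{\rm div}$; since $\nablah\Deltah^{-1}\diveh$ is a bounded Fourier multiplier, this yields $\|\wt u_0\|_{\cB^{s_1,0}}\lesssim\|u_0\|_{\cB^{s_1,0}}$ and $\|\pa_3\wt u_0\|_{\cB^{s_2,0}}\lesssim\|\pa_3u_0\|_{\cB^{s_2,0}}$. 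For~\eqref{S4eq9}, set $\wt u_\la(t)=\wt u(t)\exp\bigl(-\la\int_0^tf(t')\,dt'\bigr)$ as in~\eqref{4.2}, with $f(t)=\|\nablah\baruh(t)\|_{L^\infty_\v(L^2_\h)}^2$, so that $\pa_t\wt u_\la+\la f\wt u_\la+\baruh\cdot\nablah\wt u_\la-\Delta\wt u_\la=-\nabla\wt p_\la$ and $\dive\wt u_\la=0$. Applying $\D_k^\h$, taking the $L^2$ inner product with $\D_k^\h\wt u_\la$ and summing over the three components, the pressure drops out because $\sum_m(\D_k^\h\pa_m\wt p_\la\,|\,\D_k^\h\wt u_\la^m)_{L^2}=-(\D_k^\h\wt p_\la\,|\,\D_k^\h\dive\wt u_\la)_{L^2}=0$, while the convection term is handled by Lemma~\ref{S4lem1} applied componentwise with $s=s_1\in\,]-1,2[$. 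Integrating in time, multiplying by $2^{2ks_1}$, taking square roots, summing over $k$, using Young's inequality to absorb half of $\|\nabla\wt u_\la\|_{\wt{L}^2_t(\cB^{s_1,0})}$ and fixing $\la=C^2$ exactly as in Proposition~\ref{S4prop1} gives $\|\wt u_\la\|_{\wt{L}^\infty_t(\cB^{s_1,0})}+\|\nabla\wt u_\la\|_{\wt{L}^2_t(\cB^{s_1,0})}\le 2\|\wt u_0\|_{\cB^{s_1,0}}$; removing the weight and using $\int_0^\infty f\,dt'\le A_\d(\uh_0)$ from~\eqref{S4eq1} yields~\eqref{S4eq9}.

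For~\eqref{S4eq10}, differentiate the $\la$-weighted equation in $x_3$, which produces $\pa_t\pa_3\wt u_\la+\la f\pa_3\wt u_\la+\baruh\cdot\nablah\pa_3\wt u_\la+\pa_3\baruh\cdot\nablah\wt u_\la-\Delta\pa_3\wt u_\la=-\nabla\pa_3\wt p_\la$ together with $\dive\pa_3\wt u_\la=0$. Testing $\D_k^\h$ of this against $\D_k^\h\pa_3\wt u_\la$ again annihilates the pressure; the term $\baruh\cdot\nablah\pa_3\wt u_\la$ is controlled by Lemma~\ref{S4lem1} with $b=\pa_3\wt u_\la$ and $s=s_2\in\,]-1,1[$, and the datum is bounded as above. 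The genuinely new contribution is the cross term $(\D_k^\h(\pa_3\baruh\cdot\nablah\wt u_\la)\,|\,\D_k^\h\pa_3\wt u_\la)_{L^2}$. To deal with it, use that $\diveh\baruh=0$, hence $\diveh\pa_3\baruh=0$, to rewrite $\pa_3\baruh\cdot\nablah\wt u_\la^m=\diveh(\wt u_\la^m\,\pa_3\baruh)$ and integrate the horizontal derivative onto $\pa_3\wt u_\la$, so that this term is bounded by $\|\D_k^\h(\wt u_\la\otimes\pa_3\baruh)\|_{L^2}\,\|\D_k^\h\nablah\pa_3\wt u_\la\|_{L^2}$; after Young's inequality the second factor is absorbed by the dissipation $\|\D_k^\h\nabla\pa_3\wt u_\la\|_{L^2}^2$, and one is left with controlling $\|\wt u_\la\otimes\pa_3\baruh\|_{\wt{L}^2_t(\cB^{s_2,0})}$. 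That product is estimated by the product law of Lemma~\ref{lemproductlawBh} (or the anisotropic law~\eqref{lemproduct2}), feeding in the bounds~\eqref{S4eq9} for $\wt u_\la$ and~\eqref{S4eq2a}/\eqref{estimatebaru1} for $\pa_3\baruh$, the borderline vertical index $\f12$ in those laws being exactly what keeps all the quantities finite and uniform in time. The usual sequence — integration over $[0,t]$, multiplication by $2^{2ks_2}$, square root, summation over $k$, absorption, choice of $\la$, removal of the weight via~\eqref{S4eq1} — then delivers~\eqref{S4eq10}.

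The delicate point, where I expect to spend most of the effort, is precisely this cross term $\pa_3\baruh\cdot\nablah\wt u_\la$: it does not have the paraproduct shape covered by Lemmas~\ref{S4lem1}--\ref{S4lem2}, so it cannot be absorbed into the exponential weight and must instead be closed by combining the horizontal divergence-free structure of $\baruh$ (to trade one horizontal derivative against the dissipation) with the a priori control of $\baruh$ from Proposition~\ref{propbaru} and of $\wt u$ from~\eqref{S4eq9} just obtained; keeping track of the admissible exponent ranges — $s_1\in\,]-1,2[$ for Lemma~\ref{S4lem1} against $s_2\in\,]-1,1[$ for Lemma~\ref{S4lem2} and for the product laws — is where most of the bookkeeping lives. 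A minor but essential difference from Propositions~\ref{S4prop1}--\ref{S4prop2} is that, because the pressure in~\eqref{eqtwtu} is a genuine three-dimensional quantity, the cancellation $\D_k^\h\nabla\wt p\perp\D_k^\h\wt u$ in $L^2$ forces one to keep the full velocity $\wt u$ (not only $\wt u^\h$) together throughout the whole estimate.
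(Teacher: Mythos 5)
Your reduction of the data ($\wtu_0^\h=\nablah\Deltah^{-1}\diveh\uh_0$, so $\|\wtu_0\|_{\cB^{s_1,0}}\lesssim\|u_0\|_{\cB^{s_1,0}}$ and $\|\pa_3\wtu_0\|_{\cB^{s_2,0}}\lesssim\|\pa_3u_0\|_{\cB^{s_2,0}}$) and your proof of \eqref{S4eq9} --- the $\la$-weighted energy estimate on $\D_k^\h\wtu_\la$, the pressure killed by $\dive\wtu_\la=0$, the convection term handled by Lemma \ref{S4lem1} with $s=s_1$, absorption, choice of $\la$, and removal of the weight via \eqref{S4eq1} --- coincide with the paper's argument, as does your treatment of $\baruh\cdot\nablah\pa_3\wtu_\la$ in the vertical-derivative estimate. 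The gap is in your treatment of the cross term $\pa_3\baruh\cdot\nablah\wtu_\la$. After your integration by parts you must control $\|\wtu_\la\otimes\pa_3\baruh\|_{\wt{L}^2_t(\cB^{s_2,0})}$, and this cannot be closed by Lemma \ref{lemproductlawBh} or \eqref{lemproduct2} ``feeding in \eqref{S4eq9} and \eqref{S4eq2a}'': those product laws require vertical regularity $\f12$ (a $B^{\cdot,\f12}_{2,1}$ or $H^{\cdot,\f12}$ norm) on one of the two factors, whereas \eqref{S4eq9} controls $\wtu$ and \eqref{S4eq2a}/\eqref{estimatebaru2} control $\pa_3\baruh$ only in spaces of type $\cB^{\cdot,0}$ or $H^{\cdot,0}$, with zero vertical regularity; a product of two functions that are merely $L^2$ in $x_3$ is in general not $L^2$ in $x_3$. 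Supplying the missing half vertical derivative on $\wtu_\la$ by interpolation reintroduces $\pa_3\wtu_\la$, the very quantity being estimated (and at horizontal index $0$ rather than $s_2$, so it cannot simply be absorbed by the dissipation you kept); supplying it on $\pa_3\baruh$ would require control of a further vertical derivative of $\baruh$, which Proposition \ref{propbaru} does not give.

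Moreover, even if one closes that loop, turning the cross term into a source term changes the shape of the conclusion: the surviving contribution is of the size of $\|\pa_3\baruh\|_{\wt{L}^\infty_t(\cB^{s_2,0})}$ multiplied by large data-dependent factors (norms of $u_0$ and $\pa_3u_0$ at other indices), so one obtains an inequality with extra additive terms rather than the stated \eqref{S4eq10}, whose right-hand side is the single quantity $C\|\pa_3u_0\|_{\cB^{s_2,0}}\exp\bigl(CA_\d(\uh_0)\bigr)$. The paper proceeds differently at exactly this point: it keeps the cross term inside the weighted scheme and bounds $\int_0^t\bigl|\bigl(\D_k^\h(\pa_3\baruh\cdot\nablah\wtu_\la)\,|\,\D_k^\h\pa_3\wtu_\la\bigr)_{L^2}\bigr|\,dt'$ by $d_k^2\,2^{-2ks_2}\|\nablah\pa_3\wtu_\la\|_{\wt{L}^2_t(\cB^{s_2,0})}\|\pa_3\wtu_\la\|_{\wt{L}^2_{t,f}(\cB^{s_2,0})}$, invoking Lemma \ref{S4lem2}, so that it is absorbed by the dissipation and the $\la f(t)$ term like every other contribution and no source term ever appears. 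Your observation that this term does not literally have the shape covered by Lemmas \ref{S4lem1}--\ref{S4lem2} is fair, but the remedy consistent with the statement you are asked to prove is a variant of Lemma \ref{S4lem2} adapted to this trilinear form (so that the $f$-weight absorption still applies), not the source-term route, which the available a priori bounds cannot close and which in any case would not yield \eqref{S4eq10} as stated.
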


\begin{proof} In view  of \eqref{eqtwtu}, we get,
by a similar derivation of \eqref{4.4} and \eqref{4.11},  that
\begin{equation}\label{4.24}
\f12\f{d}{dt}\|\D_k^\h\wtu_\la(t)\|_{L^2}^2
+\la f(t)\|\D_k^\h\wtu_\la(t)\|_{L^2}^2
+\|\D_k^\h\na \wtu_\la\|_{L^2}^2=-\bigl(\D_k^\h(\baruh\cdot
\na_\h \wtu_\la) \big| \D_k^\h\wtu_\la\bigr)_{L^2},
\end{equation}
and
\begin{equation}\begin{split}\label{4.25}
\f12\f{d}{dt}\|\D_k^\h&\pa_3\wtu_\la(t)\|_{L^2}^2
+\la f(t)\|\D_k^\h\pa_3\wtu_\la(t)\|_{L^2}^2
+\|\D_k^\h\na \pa_3\wtu_\la\|_{L^2}^2\\
&=-\bigl(\D_k^\h(\baruh\cdot
\na_\h\pa_3 \wtu_\la) | \D_k^\h\pa_3\wtu_\la\bigr)_{L^2}
-\bigl(\D_k^\h(\pa_3\baruh\cdot
\na_\h\wtu_\lam) | \D_k^\h\pa_3\wtu_\lam\bigr)_{L^2}.
\end{split}\end{equation}
Applying Lemma \ref{S4lem1} gives
\begin{equation*}\begin{split}\label{4.26}
\int_0^t\bigl|\bigl(\D_k^\h(\baruh\cdot
\na_\h \wtu_\la)
\big| \D_k^\h\wtu_\lam\bigr)_{L^2}\bigr|\,dt'
\lesssim d_k^2 2^{-2k s_1}\|\na_\h\wtu_\la\|_{\wt{L}^2_t(\cB^{s_1,0})}
\|\wtu_\la\|_{\wt{L}^2_{t,f}(\cB^{s_1,0})},
\end{split}\end{equation*}
and
\begin{equation*}\begin{split}\label{4.27}
\int_0^t\bigl|\bigl(\D_k^\h(\baruh\cdot
\na_\h\pa_3 \wtu_\la)
\big| \D_k^\h\pa_3\wtu_\lam\bigr)_{L^2}\bigr|\,dt'
\lesssim d_k^2 2^{-2ks_2}\|\na_\h\pa_3\wtu_\la\|_{\wt{L}^2_t(\cB^{s_2,0})}
\|\pa_3\wtu_\la\|_{\wt{L}^2_{t,f}(\cB^{s_2,0})}.
\end{split}\end{equation*}
Applying Lemma \ref{S4lem2} yields
\beno
\int_0^t\bigl|\bigl(\D_k^\h(\pa_3\baruh\cdot
\na_\h\wtu_\lam) | \D_k^\h\pa_3\wtu_\lam\bigr)_{L^2}\bigr|\,dt'\lesssim d_k^2 2^{-2ks_2}\|\na_\h\pa_3\wtu_\la\|_{\wt{L}^2_t(\cB^{s_2,0})}
\|\pa_3\wtu_\la\|_{\wt{L}^2_{t,f}(\cB^{s_2,0})}.
\eeno
By integrating \eqref{4.24} and \eqref{4.25} over $[0,t]$ and inserting the above estimates into the resulting inequalities, we find
\beno
\begin{split}
\|\D_k^\h\wtu_\la\|_{L^\infty_t(L^2)}^2
+&\la \int_0^tf(t')\|\D_k^\h\wtu_\la(t)\|_{L^2}^2\,dt'
+\|\D_k^\h\na \wtu_\la\|_{L^2_t(L^2)}^2\\
\leq &\|\D_k^\h\wtu_0\|_{L^\infty_t(L^2)}^2+Cd_k^2 2^{-2k s_1}\|\na_\h\wtu_\la\|_{\wt{L}^2_t(\cB^{s_1,0})}
\|\wtu_\la\|_{\wt{L}^2_{t,f}(\cB^{s_1,0})},
\end{split}
\eeno
and
\begin{equation*}\begin{split}
\|\D_k^\h\pa_3\wtu_\la\|_{L^\infty_t(L^2)}^2
&+\la \int_0^tf(t')\|\D_k^\h\pa_3\wtu_\la(t')\|_{L^2}^2\,dt'
+\|\D_k^\h\na \pa_3\wtu_\la\|_{L^2_t(L^2)}^2\\
&\leq \|\D_k^\h\pa_3\wtu_0\|_{L^2}^2+Cd_k^2 2^{-2ks_2}\|\na_\h\pa_3\wtu_\la\|_{\wt{L}^2_t(\cB^{s_2,0})}
\|\pa_3\wtu_\la\|_{\wt{L}^2_{t,f}(\cB^{s_2,0})}.
\end{split}\end{equation*}
With the above inequalities, by taking $\la$ larger than a uniform constant, we can follow the same line as that used in the proof of Propositions \ref{S4prop1} to show that
$$\|\wtu\|_{\wt{L}^\infty_t(\cB^{s_1,0})}
+\|\na\wtu\|_{\wt{L}^2_t(\cB^{s_1,0})}
\leq 2\|\wtu_0\|_{\cB^{s_1,0}}\exp\bigl(C A_\d(u_0^\h)\bigr),$$
and
$$\|\pa_3\wtu\|_{\wt{L}^\infty_t(\cB^{s_2,0})}
+\|\na\pa_3\wtu\|_{\wt{L}^2_t(\cB^{s_2,0})}
\leq 2\|\pa_3\wtu_0\|_{\cB^{s_2,0}}\exp\bigl(C A_\d(u_0^\h)\bigr).$$
Then  the estimates \eqref{S4eq9} and \eqref{S4eq10} follow
once we notice that
$$\|\wtu_0\|_{\cB^{s_1,0}}=\bigl\|\nablah\Deltah^{-1}(\diveh\uh_0)
\bigr\|_{\cB^{s_1,0}}+\|u_0^3\|_{\cB^{s_1,0}}
\lesssim\|u_0\|_{\cB^{s_1,0}},$$
and similarly $\|\pa_3\wtu_0\|_{\cB^{s_2,0}}\lesssim\|\pa_3 u_0\|_{\cB^{s_2,0}}.$ This completes the proof of the proposition.
\end{proof}

Let us present the proof of Proposition \ref{propwtu}.

\begin{proof}[Proof of Proposition \ref{propwtu}]
We first get, by a similar derivation of \eqref{4.23}, that
\begin{equation*}
\begin{split}
\|\wtu\|_{{L}^\infty_t(B^{\s_1,\s_2}_{2,1})}
+\|\nabla\wtu\|_{{L}^2_t(B^{\s_1,\s_2}_{2,1})}
\lesssim &\|\wtu\|_{{L}^\infty_t(\cB^{\s_1,0})}^{1-\s_2}
\|\pa_3\wtu\|_{{L}^\infty_t(\cB^{\s_1,0})}^{\s_2}\\
&+\|\nabla\wtu\|_{{L}^2_t(\cB^{\s_1,0})}^{1-\s_2}
\|\nabla\pa_3\wtu\|_{{L}^2_t(\cB^{\s_1,0})}^{\s_2},
\end{split}
\end{equation*}
which together with  Proposition \ref{S4prop3} ensures  \eqref{S2eq1}.

Whereas along the same line to the proof of Lemmas \ref{S4lem1} and \ref{S4lem2}, we infer
\begin{equation*}\begin{split}
&\int_0^t\bigl|\bigl(\D_k^\h(\baruh\cdot
\na_\h\pa_3 \wtu_\la)
\big| \D_k^\h\pa_3\wtu_\lam\bigr)_{L^2}\bigr|\,dt'
\lesssim c_k^2 2^{-2k\s_1}\|\na_\h\pa_3\wtu_\la\|_{\wt{L}^2_t(H^{\s_1,0})}
\|\pa_3\wtu_\la\|_{\wt{L}^2_{t,f}(H^{\s_1,0})};\\
&\int_0^t\bigl|\bigl(\D_k^\h(\pa_3\baruh\cdot
\na_\h\wtu_\lam) | \D_k^\h\pa_3\wtu_\lam\bigr)_{L^2}\bigr|\,dt'\lesssim c_k^2 2^{-2k\s_1}\|\na_\h\pa_3\wtu_\la\|_{\wt{L}^2_t(H^{\s_1,0})}
\|\pa_3\wtu_\la\|_{\wt{L}^2_{t,f}(H^{\s_1,0})}.
\end{split}\end{equation*}
By integrating \eqref{4.25} over $[0,t]$ and inserting the above inequalities into the resulting inequality, we find
 \beno
\begin{split}
\|\D_k^\h\pa_3&\wt{u}_\la\|_{L^\infty_t(L^2)}^2+\la \int_0^tf(t')\|\D_k^\h\pa_3\wt{u}_\la(t')\|_{L^2}^2\,dt'
+\|\D_k^\h\na \pa_3\wt{u}_\lam\|_{L^2_t(L^2)}^2\\
&\lesssim  \|\D_k^\h\pa_3\wt{u}_0\|_{L^\infty_t(L^2)}^2+c_k^2 2^{-2k\s_1}\|\na_\h\pa_3\wt{u}_\la\|_{\wt{L}^2_t(H^{\s_1,0})}
\|\pa_3\wt{u}_\la\|_{\wt{L}^2_{t,f}(H^{\s_1,0})}.
\end{split}
\eeno
Then  along the same line to the derivation of \eqref{estimatebaru2},
we achieve \eqref{S2eq2}.

It remains to prove \eqref{S2eq3}. We first get, by taking $H^{\s_2,0}$ inner product
of the $\wt{u}^\h$ equation in \eqref{eqtwtu} with $\wt{u}^\h,$ that
\beq \label{4.32}
\f12\f{d}{dt}\|\wt{u}^\h(t)\|_{H^{\s_2,0}}^2+\|\na\wt{u}^\h(t)\|_{H^{\s_2,0}}^2=-\bigl(\bar{u}^\h\cdot\na_\h\wt{u}^\h | \wt{u}^\h\bigr)_{H^{\s_2,0}}-\bigl(\na_\h \wt{p} | \wt{u}^\h\bigr)_{H^{\s_2,0}}.
\eeq
Note that $\s_2\in]0,1[,$ applying the law of product, Lemma \ref{lemproductlaw}, yields
\beno
\begin{split}
\bigl|\bigl(\bar{u}^\h\cdot\na_\h\wt{u}^\h | \wt{u}^\h\bigr)_{H^{\s_2,0}}\bigr|\leq &\|\bar{u}^\h\cdot\na_\h\wt{u}^\h\|_{H^{\s_2,0}}\|\wt{u}^\h\|_{H^{\s_2,0}}\\
\leq &C\|\bar{u}^\h\|_{B^{1,\f12}_{2,1}}\|\na_\h\wt{u}^\h\|_{H^{\s_2,0}}\|\wt{u}^\h\|_{H^{\s_2,0}}\\
\leq &C\|\na_\h \bar{u}^\h\|_{B^{0,\f12}_{2,1}}^2\|\wt{u}^\h\|_{H^{\s_2,0}}^2+\f14\|\na_\h\wt{u}^\h\|_{H^{\s_2,0}}^2.
\end{split}
\eeno

To handle the pressure term, we shall use the decomposition
$\wt{p}=\wt{p}_1+\wt{p}_2$ with $\wt{p}_1$ and $\wt{p}_2$ being given by \eqref{S2eq10}.
Then due to $\s_2\in]0,1[,$ by applying Lemma \ref{lemproductlaw}, we deduce that
\beno
\begin{split}
\bigl|\bigl(\na_\h\wt{p}_1  | \wt{u}^\h\bigr)_{H^{\s_2,0}}\bigr|\leq & \|\wt{u}^\h\cdot\na_\h\bar{u}^\h\|_{H^{\s_2-1,0}}\|\wt{u}^\h\|_{H^{1+\s_2,0}}\\
\leq& C\|\wt{u}^\h\|_{H^{\s_2,0}}\|\na_\h\bar{u}^\h\|_{B^{0,\f12}_{2,1}}\|\na_\h\wt{u}^\h\|_{H^{\s_2,0}}\\
\leq& C\|\na_\h\bar{u}^\h\|_{B^{0,\f12}_{2,1}}^2\|\wt{u}^\h\|_{H^{\s_2,0}}^2+\f14\|\na_\h\wt{u}^\h\|_{H^{\s_2,0}}^2,
\end{split}
\eeno
and
\beno
\begin{split}
\bigl|\bigl(\na_\h\wt{p}_2  | \wt{u}^\h\bigr)_{H^{\s_2,0}}\bigr|\leq & \|\wt{u}^3\pa_3\bar{u}^\h\|_{H^{\s_2,0}}\|\wt{u}^\h\|_{H^{\s_2,0}}\\
\leq& C\|\wt{u}^3\|_{B^{1,\f12}_{2,1}}\|\pa_3\bar{u}^\h\|_{H^{\s_2,0}}\|\wt{u}^\h\|_{H^{\s_2,0}}\\
\leq& C\|\na_\h\wt{u}^3\|_{B^{0,\f12}_{2,1}}^2\|\wt{u}^\h\|_{H^{\s_2,0}}^2+\|\na_\h\pa_3\bar{u}^\h\|_{H^{\s_2-1,0}}^2.
\end{split}
\eeno
By inserting the above estimates into \eqref{4.32}, we achieve
\beno
\begin{split}
\f{d}{dt}\|\wt{u}^\h\|_{H^{\s_2,0}}^2+\|\na\wt{u}^\h\|_{H^{\s_2,0}}^2
\leq C\bigl(\|\na_\h\bar{u}^\h\|_{B^{0,\f12}_{2,1}}^2
+\|\na_\h\wt{u}^3\|_{B^{0,\f12}_{2,1}}^2\bigr)\|\wt{u}^\h\|_{H^{\s_2,0}}^2
+\|\na_\h\pa_3\bar{u}^\h\|_{H^{\s_2-1,0}}^2.
\end{split}
\eeno
Applying Gronwall's inequality yields
\beq \label{4.38}
\begin{split}
\|\wt{u}^\h\|_{L^\infty_t(H^{\s_2,0})}^2+\|\na\wt{u}^\h\|_{L^2_t(H^{\s_2,0})}^2
\leq \bigl(&\|\wt u^\h_0\|_{H^{\s_2,0}}^2
+\|\na_\h\pa_3\bar{u}^\h\|_{L^2_t(H^{\s_2-1,0})}^2\bigr)\\
&\times\exp\Bigl(C\bigl(\|\na_\h\bar{u}^\h\|_{L^2_t(B^{0,\f12}_{2,1})}^2
+\|\na_\h\wt{u}^3\|_{L^2_t(B^{0,\f12}_{2,1})}^2\bigr)\Bigr).
\end{split}
\eeq
Yet it follows from Proposition \ref{propbaru} that
\beno
\begin{split}
&\|\na_\h\bar{u}^\h\|_{L^2_t(B^{0,\f12}_{2,1})}\leq C\|u_0^\h\|_{\cB^{0,0}}^{\frac12}\|\pa_3u_0^\h\|_{\cB^{0,0}}^{\frac12}\exp\bigl(C A_\d(u_0^\h)\bigr),\\
&\|\na_\h\pa_3\bar{u}^\h\|_{L^2_t(H^{\s_2-1,0})}\leq C\|\pa_3u_0^\h\|_{H^{\s_2-1,0}}\exp\bigl(C A_\d(u_0^\h)\bigr).
\end{split}
\eeno
Whereas \eqref{S2eq1} implies that
\beno
\|\na_\h\wt{u}^3\|_{L^2_t(B^{0,\f12}_{2,1})}\leq C\|u_0^\h\|_{\cB^{0,0}}^{\frac12}\|\pa_3u_0^\h\|_{\cB^{0,0}}^{\frac12}\exp\bigl(C A_\d(u_0^\h)\bigr).
\eeno
By inserting the above inequalities and the fact that
$\|\wt u^\h_0\|_{H^{\s_2,0}}\leq\|\pa_3u_0^3\|_{H^{\s_2-1,0}}$ into \eqref{4.38}, we achieve
 \eqref{S2eq3}.
This completes the proof of Proposition \ref{propwtu}.
\end{proof}

\setcounter{equation}{0}
\section{{\it A priori} estimates of the System \eqref{eqtv3om}}\label{sec5}

\subsection{The Proof of the estimate \eqref{ineqv3}}

We first observe that
$$\|\nabla v^3\|_{H^{-\f12,0}}^2
=\|v^3\|_{H^{\f12,0}}^2+\|v^3\|_{H^{-\f12,1}}^2.$$
Then the estimates of $\|\nabla v^3\|_{H^{-\f12,0}}$ is reduced to handle the estimate of
$\|v^3\|_{H^{\f12,0}}$ and $\|v^3\|_{H^{-\f12,1}}.$

By taking the $\hh$ and $\hmo$ inner product
of the $v^3$ equation in \eqref{eqtv3om} with $v^3$ respectively, we get
\begin{equation}\begin{split}\label{5.1}
\f12\f{d}{dt}\|&v^3\|_{\hh\cap\hmo}^2+\nvn_{\hh\cap\hmo}^2
=-\bigl(Q\,\big|\,v^3\bigr)_{\hh\cap\hmo},\end{split}\end{equation}
where $Q=\sum_{i=1}^6Q_i$ and
\begin{equation}\begin{split}\label{5.2a}
&Q_1=v\cdot\nabla v^3,\qquad
Q_2=v\cdot\nabla \wt u^3+\wt u\cdot\nabla v^3
+\baruh\cdot\nablah v^3,\\
& Q_3=\wt u\cdot\nabla \wt u^3, \qquad Q_4=\pD\sum_{\ell,m=1}^3\pa_\ell v^m\pa_m v^\ell,\\
&Q_5=2\pD\sum_{\ell,m=1}^3\pa_\ell v^m\pa_m (\baru^\ell+\wt u^\ell),\\
&Q_6=\pD\sum_{\ell,m=1}^3\Bigl(\pa_\ell (\baru^m+\wt u^m)
\pa_m (\baru^\ell+\wt u^\ell)-\pa_\ell \baru^m \pa_m\wt u^\ell\Bigr).
\end{split}\end{equation}
Here and in the rest of this section, we always denote
\beno
\|\cdot\|_{X\cap Y}\eqdefa \|\cdot\|_X+\|\cdot\|_Y \andf \left( f | g \right)_{X\cap Y}\eqdefa \left( f | g \right)_{X}+\left( f | g \right)_{ Y}.
\eeno

Next let us handle  term by term in \eqref{5.2a}

\noindent$\bullet$\underline{
The estimate of $\bigl(Q_{1}\,\big|\, v^3\bigr)
_{H^{\frac12,0}\cap\hmo}$.}

Applying H\"older's inequality and  the law of product, Lemma \ref{lemproductlaw}, we obtain
\begin{equation}\begin{split}\label{5.2}
\bigl|\bigl(Q_{1}\,\big|\, v^3\bigr)_{H^{\frac12,0}\cap\hmo}\bigr|
&\leq\|v\cdot\nabla v^3\|_{H^{-\f12,0}}\|v^3\|_{H^{\f32,0}\cap H^{-\f12,2}}\\
&\lesssim \|v\|_{H^{\f12,\f14}}\|\nabla v^3\|_{H^{0,\f14}}
\|v^3\|_{H^{\f32,0}\cap H^{-\f12,2}}.
\end{split}\end{equation}
While it follows from \eqref{Helmholtz} that
\begin{equation*}\begin{split}
\|v\|_{H^{\f12,\f14}}
&\lesssim \|\nablah\vh\|_{H^{-\f12,\f14}}+\|v^3\|_{H^{\f12,\f14}}\\
&\lesssim \|\omega\|_{H^{-\f12,\f14}}+\|\pa_3v^3\|_{H^{-\f12,\f14}}
+\|\nablah v^3\|_{H^{-\f12,\f14}}.
\end{split}\end{equation*}
Note that
\beno
\begin{split}
\|a\|_{H^{-\f12,\f14}}^2=&\int_{\R^3}\left(|\xi_\h|^{-1}\right)^{\f34}\left(|\xi_\h|^{-1}|\xi_3|^2\right)^{\f14}|\hat{a}(\xi)|^2\,d\xi\\
\leq &\Bigl(\int_{\R^3}|\xi_\h|^{-1}|\hat{a}(\xi)|^2\,d\xi\Bigr)^{\f34}\Bigl(\int_{\R^3}|\xi_\h|^{-1}|\xi_3|^2|\hat{a}(\xi)|^2\,d\xi\Bigr)^{\f14}=\|a\|_{H^{-\f12,0}}^{\f32}
\|\pa_3a\|_{H^{-\f12,0}}^{\f12},
\end{split}
\eeno
 and recalling
 the definition of $M(t)$ and $N(t)$ given by \eqref{defMN}, we infer
\begin{equation}\begin{split}\label{5.3}
\|v(t)\|_{H^{\f12,\f14}}
\lesssim M^{\f38}(t)N^{\f18}(t).
\end{split}\end{equation}
On the other hand, it is easy to observe that
\beq\label{5.23a}
\begin{split}
\|a\|_{H^{0,\f14}}^2=&\int_{\R^3}\left(|\xi_\h|^{-1}\right)^{\f14}\left(|\xi_\h|^{-1}|\xi_\h|^{\f43}|\xi_3|^{\f23}\right)^{\f34}|\hat{a}(\xi)|^2\,d\xi\\
\leq &\Bigl(\int_{\R^3}|\xi_\h|^{-1}|\hat{a}(\xi)|^2\,d\xi\Bigr)^{\f14}\Bigl(\int_{\R^3}|\xi_\h|^{-1}|\xi|^2|\hat{a}(\xi)|^2\,d\xi\Bigr)^{\f34}\\
=&
\|a\|_{H^{-\f12,0}}^{\f12}\|\na a\|_{H^{-\f12,0}}^{\f32},
\end{split}
\eeq
which together with \eqref{defMN}  implies
\begin{equation*}
\|\nabla v^3(t)\|_{H^{0,\f14}}
\leq M^{\f18}(t)N^{\f38}(t).
\end{equation*}
Inserting the above estimates and $
\|v^3(t)\|_{H^{\f32,0}\cap H^{-\f12,2}}\leq N^{\f12}(t)$  into \eqref{5.2}, we deduce that
\begin{equation}\label{5.5}
\bigl|\bigl(Q_{1}\,\big|\, v^3\bigr)_{H^{\frac12,0}\cap\hmo}\bigr|
\leq CM^{\f12}(t)N(t).
\end{equation}

\noindent$\bullet$\underline{
The estimate of $\bigl(Q_{2}+Q_3\,\big|\, v^3\bigr)
_{H^{\frac12,0}\cap\hmo}$.}

By applying H\"older's inequality and Lemma \ref{lemproductlaw}, we get
\begin{equation*}\begin{split}
\bigl|\bigl(Q_{2}\,\big|\, v^3\bigr)_{H^{\frac12,0}\cap\hmo}\bigr|
&\leq\|v\cdot\nabla \wt u^3+(\wt u+\baru)\cdot\nabla v^3\|_{H^{-\f12,0}}
\|v^3\|_{H^{\f32,0}\cap H^{-\f12,2}}\\
&\lesssim\bigl(\|v\|_{H^{\f12,0}}\|\nabla\wt u^3\|_\bh
+\|\wt u+\baru\|_\boh
\|\nabla v^3\|_{H^{-\f12,0}}\bigr)
 N^{\f12}(t).
\end{split}\end{equation*}
In view of \eqref{Helmholtz} and \eqref{defMN}, we infer
\begin{equation}\begin{split}\label{5.7}
\|v(t)\|_\hh&\lesssim \|\nablah\vh(t)\|_{H^{-\f12,0}}+\|v^3(t)\|_{H^{\f12,0}}\\
&\lesssim\|\omega(t)\|_{H^{-\f12,0}}+\|\pa_3v^3(t)\|_{H^{-\f12,0}}+
\|v^3(t)\|_{H^{\f12,0}}\leq M^{\f12}(t).
\end{split}\end{equation}
This in turn shows that
\begin{equation}\begin{split}\label{5.9}
\bigl|\bigl(Q_{2}\,\big|\, v^3\bigr)_{H^{\frac12,0}\cap\hmo}\bigr|
&\leq C M^{\f12}(t)N^{\f12}(t)\bigl(\|\nabla\wt u\|_\bh
+\|\nabla\baru^\h\|_\bh\bigr)\\
&\leq\f{1}{100} N(t)+CM(t)\bigl(\|\nabla\wt u\|_\bh^2
+\|\nabla\baru^\h\|_\bh^2\bigr).
\end{split}\end{equation}

Along the same line, we have
\begin{equation*}\begin{split}\label{5.10}
\bigl|\bigl(Q_3\,\big|\, v^3\bigr)_{H^{\frac12,0}\cap\hmo}\bigr|
&\leq\bigl(\|\wt u^\h\cdot\nablah\wt u^3\|_\hm
+\|\wt u^3\pa_3\wt u^3\|_\hm\bigr)\|v^3\|_{H^{\f32,0}\cap H^{-\f12,2}}\\
&\lesssim\bigl(\|\wt u^\h\|_{H^{\f12,0}}\|\nablah\wt u^3\|_\bh
+\|\wt u^3\|_\bh\|\pa_3\wt u^3\|_{H^{\f12,0}}\bigr) N^{\f12}(t).
\end{split}\end{equation*}
Applying Young's inequality yields
\begin{equation}\label{5.11}
\bigl|\bigl(Q_3\,\big|\, v^3\bigr)_{H^{\frac12,0}\cap\hmo}\bigr|
\leq\f{1}{100} N(t)+C\bigl(\|\wt u^\h\|_{H^{\f12,0}}^2\|\nabla\wt u^3\|_\bh^2
+\|\wt u^3\|_\bh^2\|\nabla\pa_3\wt u^3\|_{H^{-\f12,0}}^2\bigr).
\end{equation}

\noindent$\bullet$\underline{
The estimate of $\bigl(Q_4\,\big|\, v^3\bigr)
_{H^{\frac12,0}\cap\hmo}$.}

Let us first handle the estimate of $\bigl(Q_4\,\big|\, v^3\bigr)
_{H^{\frac12,0}}.$ Indeed it is easy to observe that
\begin{equation}\begin{split}\label{5.12}
\bigl|\bigl(Q_4\,\big|\, v^3\bigr)_{H^{\frac12,0}}\bigr|
\leq&\Bigl\|\pD\Bigl(\sum_{\ell,m=1}^2\pa_\ell v^m\pa_m v^\ell
+(\pa_3 v^3)^2\Bigr)\Bigr\|_\hh\|v^3\|_\hh\\
&
+2\bigl\|\pD ( \pa_3 v^\h\cdot\na_\h v^3)
\bigr\|
_{H^{\f12,-\f14}}\|v^3\|_{H^{\f12,\f14}}\\
\leq &\Bigl\|\sum_{\ell,m=1}^2\pa_\ell v^m\pa_m v^\ell
+(\pa_3 v^3)^2\Bigr\|_\hm\|v^3\|_\hh\\
&+2\bigl\|\pa_3 v^\h\cdot\na_\h v^3\bigr\|
_{H^{-\f12,-\f14}}\|v^3\|_{H^{\f12,\f14}}.
\end{split}\end{equation}
It follows from the law of product, Lemma \ref{lemproductlaw}, and
 \eqref{Helmholtz} that
\begin{equation*}\begin{split}\label{5.13}
\Bigl\|\sum_{\ell,m=1}^2\pa_\ell v^m\pa_m v^\ell
+(\pa_3 v^3)^2\Bigr\|_\hm
&\lesssim\|\nablah\vh\|_{H^{\f14,\f14}}^2+\|\pa_3 v^3\|_{H^{\f14,\f14}}^2\\
&\lesssim\|\omega\|_{H^{\f14,\f14}}^2+\|\pa_3 v^3\|_{H^{\f14,\f14}}^2,
\end{split}\end{equation*}
and
\begin{equation*}\begin{split}\label{5.14}
\bigl\|\pa_3 v^\h\cdot\na_\h v^3\bigr\|_{H^{-\f12,-\f14}}
&\lesssim\|\pa_3\vh\|_\hh\|\nablah v^3\|_{H^{0,\f14}}\\
&\lesssim\bigl(\|\pa_3\omega\|_\hm+\|\pa_3^2 v^3\|_\hm\bigr)\|\nablah v^3\|_{H^{0,\f14}}.
\end{split}\end{equation*}
Note that
\beq\label{5.12a}
\begin{split}
\|a\|_{H^{\f14,\f14}}^2=&\int_{\R^3}|\xi_\h|^{\f34}\left(|\xi_\h|^{-1}|\xi_3|^2\right)^{\f14}|\hat{a}(\xi)|^2\,d\xi\\
\leq &\Bigl(\int_{\R^3}|\xi_\h||\hat{a}(\xi)|^2\,d\xi\Bigr)^{\f34}\Bigl(\int_{\R^3}|\xi_\h|^{-1}|\xi_3|^2|\hat{a}(\xi)|^2\,d\xi\Bigr)^{\f14}\\
=& \|a\|_{H^{\f12,0}}^{\f32}
\|\pa_3 a\|_{H^{-\f12,0}}^{\f12},
\end{split}
\eeq
and
\beno
\begin{split}
\|\na_\h a\|_{H^{0,\f14}}^2\leq &\int_{\R^3}|\xi_\h|^{\f14}\left(|\xi_\h||\xi_3|^2\right)^{\f34}|\hat{a}(\xi)|^2\,d\xi\\
\leq &\Bigl(\int_{\R^3}|\xi_\h||\hat{a}(\xi)|^2\,d\xi\Bigr)^{\f14}\Bigl(\int_{\R^3}|\xi_\h||\xi|^2|\hat{a}(\xi)|^2\,d\xi\Bigr)^{\f34}=\|a\|_{H^{\f12,0}}^{\f12}
\|\na a\|_{H^{\f12,0}}^{\f32},
\end{split}
\eeno
we find
\begin{equation}\begin{split}\label{5.13}
\Bigl\|\sum_{\ell,m=1}^2\pa_\ell v^m\pa_m v^\ell
+(\pa_3 v^3)^2\Bigr\|_\hm
\lesssim N(t),
\end{split}\end{equation}
and
\begin{equation}\begin{split}\label{5.14}
\bigl\|\pa_3 v^\h\cdot\na_\h v^3 \bigr\|_{H^{-\f12,-\f14}}
\lesssim M^{\f18}(t)N^{\f78}(t).
\end{split}\end{equation}

Inserting the estimates \eqref{5.3}, \eqref{5.13} and \eqref{5.14} into
\eqref{5.12} leads to
\begin{equation}\label{5.15}
\bigl|\bigl(Q_4\,\big|\, v^3\bigr)_{H^{\frac12,0}}\bigr|
\leq C M^{\f12}(t)N(t).
\end{equation}

Let us turn to the estimate of  $\bigl(Q_4\,\big|\, v^3\bigr)_{H^{-\frac12,1}}$. Observing that
\begin{equation*}\begin{split}\label{5.16}
\bigl|\bigl(Q_4\,\big|\, v^3\bigr)_{H^{-\frac12,1}}\bigr|
\leq &\Bigl\|\pD\Bigl(\sum_{\ell,m=1}^2\pa_\ell v^m\pa_m v^\ell
+(\pa_3 v^3)^2\Bigr)\Bigr\|_\hmo\|v^3\|_\hmo\\
&
+2\bigl\|\pD(\pa_3 v^\h\cdot\na_\h v^3)\bigr\|_{H^{-\f12,\f34}}\|v^3\|_{H^{-\f12,\f54}}\\
\leq&\Bigl\|\sum_{\ell,m=1}^2\pa_\ell v^m\pa_m v^\ell
+(\pa_3 v^3)^2\Bigr\|_\hm\|\pa_3v^3\|_\hm\\
&+2\bigl\|\pa_3 v^\h\cdot\na_\h v^3\bigr\|_{H^{-\f12,-\f14}}\|v^3\|_{H^{-\f12,\f54}},
\end{split}\end{equation*}
from which, and \eqref{5.13}, \eqref{5.14}, and the fact that
$$\|v^3\|_{H^{-\f12,\f54}}\leq\|\pa_3v^3\|_{H^{-\f12,0}}^{\f34}
\|\pa_3^2 v^3\|_{H^{-\f12,0}}^{\f14}\leq M^{\f38}(t)N(t)^{\f18},$$
we deduce
\begin{equation}\label{5.17}
\bigl|\bigl(Q_4\,\big|\, v^3\bigr)_{\hmo}\bigr|
\leq C M^{\f12}(t)N(t).
\end{equation}

Combining  \eqref{5.15} with \eqref{5.17}, we achieve
\begin{equation}\label{5.18}
\bigl|\bigl(Q_4\,\big|\, v^3\bigr)_{H^{\frac12,0}\cap\hmo}\bigr|
\leq C M^{\f12}(t)N(t).
\end{equation}

\noindent$\bullet$\underline{
The estimate of $\bigl(Q_5+Q_6\,\big|\, v^3\bigr)
_{H^{\frac12,0}\cap\hmo}$.}

Due to $\dive\bar{u}=\dive\wt{u}=0,$ we  write
$$Q_5=2\sum_{\ell,m=1}^3\pD\pa_\ell
\bigl( v^m\pa_m (\baru^\ell+\wt u^\ell)\bigr).$$
Applying the law of product, Lemma \ref{lemproductlaw},  yields
\begin{equation*}\begin{split}
\bigl|\bigl(Q_{5}\,\big|\, v^3\bigr)_{H^{\frac12,0}\cap\hmo}\bigr|
&\lesssim\Bigl\|\sum_{\ell,m=1}^3\pD\pa_\ell
\bigl( v^m\pa_m (\baru^\ell+\wt u^\ell)\bigr)\Bigr\|_{H^{-\f12,0}}
\|v^3\|_{H^{\f32,0}\cap H^{-\f12,2}}\\
&\lesssim\|v\|_{H^{\f12,0}}\bigl(\|\nabla\bar u\|_\bh
+\|\nabla \wt u\|_\bh\bigr)
 N^{\f12}(t),
\end{split}\end{equation*}
from which and \eqref{5.7}, we deduce
\begin{equation}\label{5.20}
\bigl|\bigl(Q_{5}\,\big|\, v^3\bigr)_{H^{\frac12,0}\cap\hmo}\bigr|
\leq \f{1}{100} N(t)+CM(t)\bigl(\|\nabla\bar u^\h\|_\bh^2
+\|\nabla \wt u\|_\bh^2\bigr).
\end{equation}

Similarly, by using $\dive\bar{u}=\dive\wt{u}=0$ once again, we write
$$
Q_6=2\sum_{\ell,m=1}^3\pa_\ell\pa_m\Delta^{-1}
\Bigl(\pa_3(\baru^m+\wt u^m)(\baru^\ell+\wt u^\ell)
-\pa_3\baru^m\wt u^\ell\Bigr).
$$
It follows from the law of product, Lemma \ref{lemproductlaw}, that
\begin{equation*}\begin{split}\label{5.21}
&\Bigl\|\sum_{\ell,m=1}^3\pa_\ell\pa_m\Delta^{-1}
\Bigl(\pa_3(\baru^m+\wt u^m)(\baru^\ell+\wt u^\ell)
-\pa_3\baru^m\wt u^\ell
\Bigr)\Bigr\|_{H^{-\f12,0}}\|v^3\|_{H^{\f32,0}\cap H^{-\f12,2}}\\
&\lesssim \bigl(\|\pa_3\baru\|_\hh+\|\pa_3\wt u\|_\hh\bigr)
\bigl(\|\baru\|_\bh+\|\wt u\|_\bh\bigr) N^{\f12}(t).
\end{split}\end{equation*}
Then by applying convexity inequality, we find
\begin{equation}\label{5.22}
\bigl|\bigl(Q_{6}\,\big|\, v^3\bigr)_{H^{\frac12,0}\cap\hmo}\bigr|
\leq \f{1}{100} N(t)+C\bigl(\|\pa_3\baru\|_\hh^2+\|\pa_3\wt u\|_\hh^2\bigr)
\bigl(\|\baru\|_\bh^2+\|\wt u\|_\bh^2\bigr).
\end{equation}

By inserting the estimates \eqref{5.5}-\eqref{5.11},
and \eqref{5.18}-\eqref{5.22} into \eqref{5.1}, we achieve
 \eqref{ineqv3}.

\subsection{The Proof of the estimate \eqref{ineqom}}

By taking the $\hm$ inner product of the $\omega$ equation in \eqref{eqtv3om}
with $\omega$, we get
\begin{equation}\begin{split}\label{5.23}
&\f12\f{d}{dt}\|\omega\|_{\hm}^2+\nomn_{\hm}^2
=-\bigl(I_1+\cdots+I_6\,\big|\,\omega\bigr)_\hm,\quad\mbox{with}\\
&I_1=v\cdot\nabla\omega-\om\pa_3 v^3,\qquad
I_2=\wt u^\h\cdot\nablah(\barom+\wt\om)-(\barom+\wt\om)\pa_3 u^3,\\
&I_3=\wt u^3\pa_3(\barom+\wt\om),\qquad\quad
\ I_4=v\cdot\nabla(\barom+\wt\om)+(\baru+\wt u)\cdot\nabla\omega
-\omega\pa_3\wt u^3,\\
&I_5=\pa_3 u^\h\cdot\nablah^\perp u^3,\qquad\quad
\ I_6=-\pa_2\baruh\cdot\nablah\wt u^1
+\pa_1\baruh\cdot\nablah\wt u^2.
\end{split}\end{equation}

Next we handle  term by term above.

\noindent$\bullet$\underline{
The estimate of $\bigl(I_{1}\,\big|\, \om\bigr)_\hm$.}

Due to $\dive v=0,$ we get, by using integration by parts, that
$$\bigl(v\cdot\nabla\omega\,\big|\,\omega\bigr)_\hm
=\bigl(\dive(v\omega)\,\big|\,\omega\bigr)_\hm
=-\bigl(v\omega\,\big|\,\nabla\omega\bigr)_\hm.$$
Then it follows from  Lemma \ref{lemproductlaw}
that
\begin{equation*}\begin{split}
\bigl|\bigl(I_{1}\,\big|\, \om\bigr)_{\hm}\bigr|
&\leq\|v\om\|_{H^{-\f12,0}}\|\nabla\om\|_{\hm}
+\|\om\pa_3 v^3\|_{H^{-\f12,0}}\|\om\|_{\hm}\\
&\leq\|v\|_{H^{\f12,\f14}}\|\om\|_{H^{0,\f14}}\|\nabla\om\|_{\hm}
+\|\om\|_{H^{\f14,\f14}}\|\pa_3 v^3\|_{H^{\f14,\f14}}\|\om\|_{\hm},
\end{split}\end{equation*}
from which,  \eqref{5.3},  \eqref{5.23a} and \eqref{5.12a}, we deduce that
\begin{equation}\begin{split}\label{5.24}
\bigl|\bigl(I_{1}\,\big|\, v^3\bigr)_{\hm}\bigr|
&\lesssim M^{\f38}(t)N^{\f18}(t)\cdot M^{\f18}(t)N^{\f38}(t)\cdot N^{\f12}(t)+N^{\f12}(t)\cdot N^{\f12}(t)\cdot M^{\f12}(t)\\
&\lesssim M^{\f12}(t)N(t).
\end{split}\end{equation}

\noindent$\bullet$\underline{
The estimate of $\bigl(I_{2}\,\big|\, \om\bigr)_\hm$.}

Due to $\dive \wt{u}=0,$ we have $\dive_\h\wt{u}^\h=-\pa_3\wt{u}$ and
$$\bigl(\wt u^\h\cdot\nablah(\barom+\wt\om)\,\big|\, \om\bigr)_{\hm}
=\bigl(\diveh\bigl((\barom+\wt\om)\wt u^\h\bigr)
+(\barom+\wt\om)\pa_3\wt u^3\,\big|\, \om\bigr)_{\hm}\,,$$
so that we get, by using integration by parts and the law of product, Lemma \ref{lemproductlaw}, that
\begin{equation*}\begin{split}\label{5.25}
\bigl|\bigl(\diveh\bigl(\wt u^\h(\barom+\wt\om)\bigr)
\,\big|\, \om\bigr)_{\hm}\bigr|
&\leq\|\wt u^\h(\barom+\wt\om)\|_\hm\|\nablah\om\|_\hm\\
&\leq C\|\wt u^\h\|_\hh\|\barom+\wt\om\|_\bh\|\nabla\om\|_\hm\\
&\leq\f{1}{100}\|\nabla\om\|_\hm^2+C\|\wt u^\h\|_\hh^2
\bigl(\|\barom\|_\bh^2+\|\wt\om\|_\bh^2\bigr).
\end{split}\end{equation*}
Applying  the law of product, Lemma \ref{lemproductlaw}, once again yields
\begin{equation*}\begin{split}\label{5.26}
\bigl|\bigl((\barom+\wt\om)\pa_3\wt u^3\,\big|\, \om\bigr)_{\hm}\bigr|
&\leq\|(\barom+\wt\om)\pa_3\wt u^3\|_\hm\|\om\|_\hm\\
&\leq C\|\pa_3\wt u^3\|_\hh\|(\barom+\wt\om)\|_\bh\|\om\|_\hm\\
&\leq C\|\om\|_\hm^2\|(\barom+\wt\om)\|_\bh^2
+\f1{100}\|\pa_3\wt u^3\|_\hh^2.
\end{split}\end{equation*}
And exactly along the same line, we find
\begin{equation*}\label{5.27}
\bigl|\bigl((\barom+\wt\om)\pa_3 u^3\,\big|\, \om\bigr)_{\hm}\bigr|
\leq C\|\om\|_\hm^2\|\barom+\wt\om\|_\bh^2
+\f1{100}\|\pa_3 (v^3+\wt u^3)\|_\hh^2.
\end{equation*}
As a result, it comes out
\begin{equation}\label{5.28}
\bigl|\bigl(I_2\,\big|\, \om\bigr)_{\hm}\bigr|
\leq C M(t)\bigl(\|\nabla\baru^\h\|_\bh^2+\|\nabla\wt u\|_\bh^2\bigr)
+C\|\nabla_\h\pa_3\wt u^3\|_\hm^2+\f1{100}N(t).
\end{equation}

\noindent$\bullet$\underline{
The estimate of $\bigl(I_{3}\,\big|\, \om\bigr)_\hm $.}

We deduce from the law of product, Lemma \ref{lemproductlaw}, that
\begin{equation}\begin{split}\label{5.29}
\bigl|\bigl(I_3\,\big|\, \om\bigr)_{\hm}\bigr|
&\leq\|\wt u^3\pa_3(\barom+\wt\om)\|_\hm\|\om\|_\hm\\
&\lesssim\|\wt u^3\|_\boh\|\pa_3(\barom+\wt\om)\|_\hm\|\om\|_\hm\\
&\lesssim\|\om\|_\hm^2 \|\nablah\wt u^3\|_\bh^2
+\bigl(\|\pa_3\barom\|_\hm^2+\|\pa_3\wt\om\|_\hm^2\bigr)\\
&\lesssim M(t) \|\nabla\wt u\|_\bh^2+\bigl(\|\nabla\pa_3\baru^\h\|_\hm^2
+\|\nabla\pa_3\wt u\|_\hm^2\bigr).
\end{split}\end{equation}

\noindent$\bullet$\underline{
The estimate of $\bigl(I_{4}\,\big|\, \om\bigr)_\hm $.}

Due to $\dive v=0,$ we get,  by using integration by parts and then
Lemma \ref{lemproductlaw}, that
\begin{equation*}\begin{split}\label{5.30}
\bigl|\bigl(v\cdot\nabla(\barom+\wt\om)\,\big|\,\om\bigr)_{\hm}\bigr|
&=\bigl|\bigl(v(\barom+\wt\om)\,\big|\,\nabla \om\bigr)_{\hm}\bigr|\\
&\leq\|v(\barom+\wt\om)\|_\hm\|\nabla\om\|_\hm\\
&\leq C\|v\|_\hh\|\barom+\wt\om\|_\bh\|\nabla\om\|_\hm\\
&\leq \f1{100}\|\nabla\om\|_\hm^2+C\|v\|_\hh^2
\bigl(\|\nabla_\h\baru^\h\|_\bh^2
+\|\nabla_\h\wt u\|_\bh^2\bigr),
\end{split}\end{equation*}
and
\begin{equation*}\begin{split}\label{5.31}
\bigl|\bigl((&\baru+\wt u)\cdot\nabla\omega
 -\omega\pa_3\wt u^3\,\big|\,\om\bigr)_{\hm}\bigr|\\
&\leq\|(\baru+\wt u)\cdot\nabla\omega
-\omega\pa_3\wt u^3\|_\hm\|\om\|_\hm\\
&\leq C\bigl(\|\baru+\wt u\|_\boh\|\nabla\om\|_\hm
+\|\om\|_\hh\|\pa_3\wt u^3\|_\bh\bigr)\|\om\|_\hm\\
&\leq \f1{100}\|\nabla\om\|_\hm^2+C\|\om\|_\hm^2
\bigl(\|\nabla\baru^\h\|_\bh^2
+\|\nabla\wt u\|_\bh^2\bigr).
\end{split}\end{equation*}
By summing up the above two estimates, we achieve
\begin{equation}\label{5.32}
\bigl|\bigl(I_{4}\,\big|\, \om\bigr)_\hm\bigr|
\leq CM(t)
\bigl(\|\nabla\baru^\h\|_\bh^2
+\|\nabla\wt u\|_\bh^2\bigr)+\f1{50}N(t).
\end{equation}

\noindent$\bullet$\underline{
The estimate of $\bigl(I_{5}\,\big|\, \om\bigr)_\hm$.}

As $u^3=\wt{u}^3+v^3,$ We shall deal with the estimate involving $\wt{u}^3$ and $\v^3$ separately. We first get,
by applying the law of product, Lemma \ref{lemproductlaw}, that
\begin{equation}\begin{split}\label{5.33}
\bigl|\bigl(\pa_3 u^\h\cdot\nablah^\perp v^3\,\big|\, \om\bigr)_{\hm}\bigr|
&\leq\|\pa_3 u^\h\cdot\nablah^\perp v^3\|_{H^{-\f12,-\f14}}
\|\om\|_{H^{-\f12,\f14}}\\
&\lesssim\|\pa_3 u^\h\|_\hh\|\nablah^\perp v^3\|_{H^{0,\f14}}
\|\om\|_{H^{-\f12,\f14}}.
\end{split}\end{equation}
In view of  \eqref{Helmholtz}, we have
\begin{equation}\begin{split}\label{5.34}
\|\pa_3 u^\h\|_\hh
&\leq \|\nablah\pa_3 v^\h\|_\hm
+\|\nablah\pa_3 (\baru^\h+\wt u^\h)\|_\hm\\
&\leq \|\pa_3 \omega\|_\hm+\|\pa_3^2 v^3\|_\hm
+\|\nabla\pa_3( \baru^\h+\wt u^\h)\|_\hm\\
&\leq N^{\f12}(t)
+\|\nabla\pa_3( \baru^\h+\wt u^\h)\|_\hm.
\end{split}\end{equation}
Whereas it follows from \eqref{5.23a} that
\begin{equation*}\begin{split}\label{5.35}
&\|\nablah^\perp v^3\|_{H^{0,\f14}}\leq \|v^3\|_\hh^{\f14}
\|\nabla v^3\|_\hh^{\f34}\leq M^{\f18}(t)N^{\f38}(t),\quad\mbox{and}\\
&\|\om\|_{H^{-\f12,\f14}}\leq\|\om\|_{H^{-\f12,0}}^{\f34}
\|\pa_3\om\|_{H^{-\f12,0}}^{\f14}\leq M^{\f38}(t)N^{\f18}(t).
\end{split}\end{equation*}
Inserting the above estimates into \eqref{5.33}
and using convexity inequality gives rise to
\begin{equation*}\begin{split}\label{5.36}
\bigl|\bigl(\pa_3 u^\h\cdot\nablah^\perp v^3\,\big|\, \om\bigr)_{\hm}\bigr|
\leq CM^{\f12}(t)N(t)+M(t) N(t)
+C\|\nabla\pa_3( \baru^\h+\wt u^\h)\|_\hm^2.
\end{split}\end{equation*}
Along the same line, by using \eqref{5.34}, we obtain
\begin{equation*}\begin{split}\label{5.37}
\bigl|\bigl(\pa_3 u^\h\cdot\nablah^\perp \wt u^3\,\big|\, \om\bigr)_{\hm}\bigr|
&\leq\|\pa_3 u^\h\cdot\nablah^\perp \wt u^3\|_\hm\|\om\|_\hm\\
&\leq C\|\pa_3\uh\|_\hh\|\nablah^\perp \wt u^3\|_\bh\|\om\|_\hm\\
&\leq C\bigl(N^{\f12}(t)
+\|\nabla\pa_3( \baru^\h+\wt u^\h)\|_\hm\bigr)
\|\nabla\wt u\|_\bh M^{\f12}(t)\\
&\leq CM(t)\|\nabla\wt u\|_\bh^2
+C\|\nabla\pa_3( \baru^\h+\wt u^\h)\|_\hm^2+\f1{100}N(t).
\end{split}\end{equation*}
By combining the above two estimates,
we achieve
\begin{equation}\label{5.38}\begin{split}
\bigl|\bigl(I_5\,\big|\, \om\bigr)_{\hm}\bigr|
\leq &\bigl(\f1{100}+CM^{\f12}(t)+M(t)\bigr)N(t)\\
&+CM(t)\|\nabla\wt u\|_\bh^2+C\|\nabla\pa_3( \baru^\h+\wt u^\h)\|_\hm^2.
\end{split}\end{equation}

\noindent$\bullet$\underline{
The estimate of $\bigl(I_{6}\,\big|\, \om\bigr)_\hm$.}

 We get,
by applying the law of product, Lemma \ref{lemproductlaw}, that
\begin{equation}\begin{split}\label{5.39}
\bigl|\bigl(I_6\,\big|\, \om\bigr)_{\hm}\bigr|
&\leq\|-\pa_2\baruh\cdot\nablah\wt u^1
+\pa_1\baruh\cdot\nablah\wt u^2\|_\hm\|\om\|_\hm\\
&\lesssim\|\nablah \baru^\h\|_\bh\|\nablah\wt u^\h\|_\hh\|\om\|_\hm\\
&\lesssim M(t)\|\nabla \baru^\h\|_\bh^2
+\|\nabla\wt u^\h\|_\hh^2.
\end{split}\end{equation}

Substituting the estimates \eqref{5.24}-\eqref{5.32},~\eqref{5.38} and \eqref{5.39} into \eqref{5.23}, we obtain \eqref{ineqom}.
This concludes the proof of Proposition \ref{aprioriv}.

\setcounter{equation}{0}
\section{The proof of Theorem \ref{thmmain}}\label{sec6}

The purpose of this section is to complete the proof of Theorem \ref{thmmain}.
The strategy is to verify that, the necessary condition, \eqref{blowupCZ5},
for the finite time blow-up of Fujita-Kato solutions to $(NS)$ can never be satisfied.
Let us first present the proof of Proposition \ref{S6prop1}.

\begin{proof}[Proof of Proposition \ref{S6prop1}]
As explained in Section \ref{sec2}, let us consider the maximal solution
$v\in \cE_{T^\ast}$
of \eqref{eqtv}, where $T^\ast$ denotes the lifespan of $(NS)$ in \eqref{FTS}.
Let $M(t), N(t)$ be determined by \eqref{defMN},  we denote
\begin{equation}\label{S6eq1}
T^\star \eqdef\sup\Bigl\{\, T\in ]0,T^\ast[\ :
\sup_{t\in [0,T[}\bigl(M(t)+\int_0^t N(t')\,dt'\bigr)\leq \eta
\, \Bigr\}
\end{equation}
for some sufficiently small constant $\eta$, which will be determined later on.

We are going to prove that $T^\star=T^\ast$ for some particular choice of $\eta.$ Otherwise,
if $T^\star<T^\ast,$  for any $t\in[0,T^\star]$,
we get, by summing up the estimates \eqref{ineqv3},~\eqref{ineqom}
and using the bound in \eqref{S6eq1}, that
\begin{equation} \label{S6eq1a} \begin{split}
\f{dM(t)}{dt}+2N(t)
\leq &\bigl(\f12+C\eta^{\f12}+\eta\bigr)N
+CM\bigl(\|\nabla\wt u\|_\bh^2+\|\nabla\baru^\h\|_\bh^2\bigr)\\
&+C\|\nabla\wt u^\h\|_\hh^2+C\|\wt u^\h\|_{H^{\f12,0}}^2\|\nabla\wt u^3\|_\bh^2\\
&+C\bigl(1+\|\baruh\|_\bh^2+\|\wt u\|_\bh^2\bigr)\bigl(\|\nabla\pa_3\baruh\|_{H^{-\f12,0}}^2
+\|\nabla\pa_3\wt u\|_{H^{-\f12,0}}^2\bigr).
\end{split}\end{equation}
In particular if we take $\eta=\min\bigl(\f14, \f1{16C^2}\bigr)$,  the
first term on the righthand side of \eqref{S6eq1a} can be absorbed by $2N(t)$ on its left-hand side.
Then by applying Gronwall's inequality to the resulting inequality, we achieve
\beq \label{S6eq2}
\begin{split}
M(t)&+\int_0^tN(t')\,dt'
\leq C\Bigl\{\|\nabla\wt u^\h\|_{L^2_t(\hh)}^2
+\|\wt u^\h\|_{L^\infty_t(H^{\f12,0})}^2\|\nabla\wt u^3\|_{L^2_t(\bh)}^2\\
&+\bigl(1+\|(\baruh,\wt u)\|_{L^\infty_t(\bh)}^2\bigr)
\|\nabla\pa_3(\baruh,\wt u)\|_{L^2_t(H^{-\f12,0})}^2\Bigr\}
\exp\bigl(C\|\nabla(\baru^\h,\wt u)\|_{L^2_t(\bh)}^2\bigr).
\end{split}
\eeq
On the other hand, it follows from Propositions \ref{propbaru} and
\ref{propwtu} and \eqref{S3eq6} that
\begin{equation}\label{S6eq5}
\begin{split}
\|(&\baruh,\wt u)\|_{{L}^\infty_t(B^{0,\f12}_{2,1})}^2
+\|\nabla(\baruh,\wt u)\|_{{L}^2_t(B^{0,\f12}_{2,1})}^2\\
&\lesssim\|\uh_0\|_{\cB^{0,0}}
\|\pa_3\uh_0\|_{\cB^{0,0}}
\exp\bigl(CA_\d(\uh_0)\bigr)\\
&\lesssim\|u_0\|_{H^{-\d,0}}^{\f12}\|u_0\|_{H^{\d,0}}^{\f12}
\|\pa_3u_0\|_{H^{-\frac12,0}}^{\f12}
\|\pa_3u_0\|_{H^{\frac12,0}}^{\f12}
\exp\bigl(CA_\d(\uh_0)\bigr)\eqdefa B_\d(u_0),
\end{split}
\end{equation}
and
\begin{equation*}
\|\nabla\pa_3(\baruh,\wt u)\|_{{L}^2_t(H^{-\f12,0})}^2
\lesssim\|\pa_3u_0\|_{H^{-\f12,0}}^2
\exp\bigl(CA_\d(\uh_0)\bigr).
\end{equation*}
Whereas it follows from \eqref{S2eq3} and \eqref{S3eq6} that
\beno
\begin{split}
\|\wt{u}^\h\|_{L^\infty_t(H^{\f12,0})}^2+\|\na\wt{u}^\h\|_{L^2_t(H^{\f12,0})}^2
\lesssim\|\pa_3u_0\|_{H^{-\f12,0}}^2\exp\bigl(CB_\d(u_0)\bigr).
\end{split}
\eeno
Inserting the above estimates into \eqref{S6eq2} gives rise to
\beq \label{S6eq3}
M(t)+\int_0^tN(t')\,dt'
\leq \|\pa_3u_0\|_{H^{-\f12,0}}^2
\exp\bigl(CA_\d(\uh_0)+CB_\d(u_0)\bigr).
\eeq
In particular, if $\e_0$ in \eqref{S1eq1} is so small that $\e_0\leq \f{\eta}2,$
then we can deduce from \eqref{S6eq3} that
\beno
\sup_{t\in [0,T^\star]}\Bigl(M(t)+\int_0^tN(t')\,dt'\Bigr)
\leq \f\eta2,
\eeno
which contradicts with the choice of $T^\star$ given in \eqref{S6eq1}.
This in turn shows that $T^\star=T^\ast.$
\end{proof}

Let us now turn to the proof of Theorem \ref{thmmain}.

\begin{proof}[Proof of Theorem \ref{thmmain}] It is easy to observe from Lemma \ref{lemBern} that
\beno
\begin{split}
\|\D_j a\|_{L^\infty}\lesssim& \sum_{\substack{k\leq j+N_0\\ \ell\leq j+N_0}}
2^k2^{\f{\ell}2}\|\D_k^\h\D_\ell^\v a\|_{L^2}\\
\lesssim& \sum_{\substack{k\leq j+N_0\\ \ell\leq j+N_0}}
d_{k,\ell}2^k\|a\|_{B^{0,\f12}_{2,1}}\lesssim d_j 2^j\|a\|_{B^{0,\f12}_{2,1}},
\end{split}
\eeno
which implies that
$$\|a\|_{B^{-1}_{\infty,\infty}}\lesssim \|a\|_{B^{0,\f12}_{2,1}}.$$
Thanks to the above inequality and \eqref{S6eq5}, we have
\beq\label{S6eq7}
\int_0^{T^\ast}\|\na(\baruh,\wt u)\|_{B^{-1}_{\infty,\infty}}^2\,dt
\lesssim \int_0^{T^\ast}\|\na(\baruh,\wt u)\|_{B^{0,\f12}_{2,1}}^2\,dt
\lesssim B_\d(u_0).
\eeq
On the other hand, by virtue of \eqref{Helmholtz} and Lemma \ref{lemBern}, we infer
\beno
\begin{split}
\|\D_j(\na v^\h)\|_{L^\infty}\lesssim &2^{\f{j}2}\sum_{k\leq j+N_0}
2^k\|\D_k^\h(\na v^\h)\|_{L^2}\\
\lesssim &2^{\f{j}2}\sum_{k\leq j+N_0}
\bigl(\|\D_k^\h(\na \om)\|_{L^2}+\|\D_k^\h(\na \pa_3v^3)\|_{L^2}\bigr)\\
\lesssim &2^{\f{j}2}\sum_{k\leq j+N_0}
c_k(t)2^{\f{k}2}\bigl(\|\na\om\|_{H^{-\f12,0}}+\|\na \pa_3v^3\|_{H^{-\f12,0}}\bigr)\\
\lesssim &c_j(t)2^j\bigl(\|\na\om\|_{H^{-\f12,0}}+\|\na \pa_3v^3\|_{H^{-\f12,0}}\bigr),
\end{split}
\eeno where $\left(c_j(t)\right)_{j\in\Z}$ denotes a generic element of $\ell^2(\Z)$ so that $\sum_{j\in\Z}c_j^2(t)=1.$
This together with \eqref{S6eq4} ensures that
\beq\label{S6eq8}
\int_0^{T^\ast}\|\na v^\h(t)\|_{B^{-1}_{\infty,\infty}}^2\,dt\lesssim \int_0^{T^\ast}\bigl(\|\na\om\|_{H^{-\f12,0}}^2
+\|\na \pa_3v^3\|_{H^{-\f12,0}}^2\bigr)\,dt\leq \eta.
\eeq
Similarly, by applying Lemma \ref{lemBern}, we get
\beno
\begin{split}
\|\D_j(\na v^3)\|_{L^\infty}\lesssim & 2^{-j}\|\D_j(\na^2 v^3)\|_{L^\infty}
\lesssim 2^{-\f{j}2}\sum_{k\leq j+N_0}2^k\|\D_k^\h\na^2v^3\|_{L^2}\\
\lesssim &2^{-\f{j}2}\sum_{k\leq j+N_0}
c_k(t)2^{\f{3k}2}\|\na^2v^3(t)\|_{H^{-\f12,0}}\\
\lesssim& c_j(t)2^j\|\na^2v^3(t)\|_{H^{-\f12,0}},
\end{split}
\eeno
from which and \eqref{S6eq4}, we deduce that
\beno
\int_0^{T^\ast}\|\na v^3(t)\|_{B^{-1}_{\infty,\infty}}^2\,dt\lesssim \|\na^2v^3\|_{L^2_t(H^{-\f12,0})}^2\leq \eta.
\eeno
This together with \eqref{S6eq7} and \eqref{S6eq8} permits us to conclude that
\beno
\int_0^{T^\ast}\|\na u(t)\|_{B^{-1}_{\infty,\infty}}^2\,dt<\infty.
\eeno
Then Theorem \ref{blowupBesovendpoint} ensures that $T^\ast=\infty.$
This completes the proof of Theorem \ref{thmmain}.
\end{proof}

\bigbreak \noindent {\bf Acknowledgments.} P. Zhang would like to thank Professor
J.-Y. Chemin for profitable discussions on this topic.
P. Zhang is partially supported
by NSF of China under Grants   11371347 and 11688101, Morningside Center of Mathematics of Chinese Academy of Sciences and innovation grant from National Center for
Mathematics and Interdisciplinary Sciences.

\medskip


\begin{thebibliography}{50}

\bibitem{Bo81} J.-M. Bony, Calcul symbolique et propagation des
   singularit\'es pour les \'equations aux d\'eriv\'ees partielles non
   lin\'eaires, {\it  Ann. Sci. \'Ec. Norm. Sup\'er.},  {\bf
   14} (1981), 209-246.


\bibitem{bcdbookk}
 H. Bahouri, J.-Y. Chemin and R. Danchin,
{\it Fourier Analysis and Nonlinear Partial Differential Equations}, Grundlehren der mathematischen Wissenschaften, {\bf 343}, Springer-Verlag Berlin Heidelberg, 2011.


\bibitem{BP08} J. Bourgain and N. Pavlovi\'c,
Ill-posedness of the Navier-Stokes equations in a critical space in 3D,
{\it J. Funct. Anal.}, {\bf 255} (2008), 2233-2247.


\bibitem{cgens} J.-Y. Chemin and I. Gallagher, On  the global wellposedness  of the 3-D
   Navier-Stokes equations with large initial data, {\it  Ann. Sci. \'Ec. Norm. Sup\'er.}, {\bf 39} (2006),
 679--698.

\bibitem{CG10} J.-Y. Chemin and I. Gallagher,
Large, global solutions to the Navier-Stokes equations, slowly varying in one direction, {\it Trans. Amer. Math. Soc.},
 {\bf 362} (2010),  2859-2873.

\bibitem{CGZ} J.-Y. Chemin, I. Gallagher and P. Zhang,
Sums of large global solutions to the incompressible Navier-Stokes equations,
{\it J. Reine Angew. Math.}, {\bf681} (2013), 65-82.

\bibitem{CL} J.-Y. Chemin and N. Lerner,  Flot de champs de vecteurs non
lipschitziens et \'equations de Navier-Stokes, {\it J. Differential
Equations}, {\bf 121} (1995), 314-328.

\bibitem {CZ1}
J.-Y. Chemin and P.  Zhang, On the global wellposedness  to the 3-D incompressible anisotropic
 Navier-Stokes equations, {\it Comm.  Math. Phys.}, {\bf 272} (2007),  529-566.

\bibitem{CZ15} J.-Y. Chemin and P.  Zhang,
 Remarks on the global solutions of 3-D Navier-Stokes system with one slow variable, {\it Comm. Partial Differential Equations}, {\bf40} (2015),  878-896.

\bibitem{CZ5}
J.-Y. Chemin and P.  Zhang, On the critical one component regularity
for 3-D Navier-Stokes system. {\it  Ann. Sci. \'Ec. Norm. Sup\'er.}, {\bf 49} (2016), 131-167.

 \bibitem{cannonemeyerplanchon}
M. Cannone, Y. Meyer and F. Planchon,
Solutions autosimilaires des {\'e}quations de Navier-Stokes,
{S{\'e}minaire ``{\'E}quations aux D{\'e}riv{\'e}es Partielles" de l'{\'E}cole polytechnique},
Expos{\'e} VIII, 1993--1994.


\bibitem{iftimieraugelsell} D. Iftimie, G. Raugel and G.~R. Sell,
Navier-Stokes equations in thin 3D domains with Navier boundary conditions, {\em  Indiana Univ. Math. J. }, {\bf 56}  (2007),  1083--1156.

\bibitem{fujitakato}
H. Fujita and T. Kato, On the Navier-Stokes initial value problem I.
{\it Arch. Ration. Mech. Anal.}, {\bf 16} (1964), 269-315.

\bibitem{kato} T. Kato,
Strong $L^p$-solutions of the Navier-Stokes equation in
$\R^m$ with applications to weak solutions, {\it  Math. Z.},
 {\bf 187} (1984),   471-480.

 \bibitem{KZ2} I. Kukavica and M.  Ziane, Navier-Stokes equations with
regularity in one direction, {\it  J. Math. Phys.},
{\bf 48} (2007), 065-203.

\bibitem {kochtataru}
H. Koch and D. Tataru, Well-posedness for
the Navier-Stokes equations, {\em  Adv. Math.}, {\bf 157} (2001),  22-35.

\bibitem{La} O.~A. Ladyzhenskaya,
Unique global solvability of the three-dimensional Cauchy problem for the Navier-Stokes
 equations in the presence of axial symmetry, (Russian) {\it Zap. Nau$\breve{c}$n. Sem. Leningrad. Otdel. Mat. Inst. Steklov. (LOMI)},
 {\bf 7} (1968), 155-177.

\bibitem {lerayns}
J. Leray, Essai sur le mouvement d'un liquide visqueux emplissant
l'espace, {\em Acta Math.}, {\bf 63} (1933),  193--248.

\bibitem{meyer} Y. Meyer.
{\sl Wavelets, Paraproducts and Navier--Stokes}.
Current Developments in Mathematics, International Press, Cambridge,
Massachussets, 1996.

\bibitem{Pa02} M. Paicu, \'Equation anisotrope
de Navier-Stokes dans des espaces  critiques,
{\it  Rev. Mat. Iberoam.},  {\bf 21} (2005), 179-235.

\bibitem{PZ1} M. Paicu and P. Zhang, Global solutions to the 3-D incompressible
 anisotropic Navier-Stokes system  in the critical spaces,
  {\it Comm.  Math. Phys.}, {\bf 307} (2011), 713-759.



\bibitem{raugelsell}
G. Raugel and G.~R. Sell,  Navier-Stokes equations on thin $3$D domains. I.
 Global attractors and global regularity of solutions,
{\it J. Amer. Math. Soc.},~{\bf 6} (1993),
 503--568.

 \bibitem{UY} M.~ R. Ukhovskii, and V. I. Iudovich,  Axially symmetric flows of ideal and viscous fluids
filling the whole space, {\it J. Appl. Math. Mech.}, {\bf 32} (1968)
52-61.
\end{thebibliography}
\end{document}